 \newtheorem{thm}{Theorem}[section]
 \newtheorem{cor}[thm]{Corollary}
 \newtheorem{lem}[thm]{Lemma}
 \newtheorem{prop}[thm]{Proposition}
 \theoremstyle{definition}
 \newtheorem{defn}[thm]{Definition}
 \theoremstyle{remark}
 \newtheorem{rem}[thm]{Remark}
\newtheorem{ex}{Example}
\numberwithin{equation}{section}
\newcommand{\scal}[1]{\left<#1\right>}
\newcommand{\R}{\mathbb{R}}      
\newcommand{\C}{\mathbb{C}}
\title[The Fueter-Sce mapping and the Clifford-Appell polynomials ]{The Fueter-Sce mapping and the Clifford-Appell polynomials }
\begin{document}
\date{}
\author{Antonino De Martino, \, Kamal Diki, \, Ali Guzmán Adán }
\maketitle
\begin{abstract}
The Fueter-Sce theorem provides a procedure to obtain axially monogenic functions, which are in the kernel of generalized Cauchy-Riemann operator in $ \mathbb{R}^{n+1}$. This result is obtained by using two operators. The first one is the slice operator, which extends holomorphic functions of one complex variable to slice monogenic functions in $ \mathbb{R}^{n+1}$. The second one is a suitable power of the Laplace operator in $n+1$ variables. Another way to get axially monogenic functions is the generalized Cauchy-Kovalevskaya (CK) extension. This characterizes axial monogenic functions by their restriction to the real line. In this paper, using the connection between the Fueter-Sce map and the generalized CK-extension, we explicitly compute the actions  $\Delta_{\mathbb{R}^{n+1}}^{\frac{n-1}{2}} x^k$, where $x \in \mathbb{R}^{n+1}$. The expressions obtained is related to a well-known class of Clifford-Appell polynomials. These are the building blocks to write a Taylor series for axially monogenic functions. Moreover, we focus on some elementary axially monogenic functions, where the action of the Fueter-Sce map and the generalized CK-extension coincide. In order to get algebraic relations between the elementary functions, as in complex analysis, we define a new product between axially monogenic functions. By using the connections between the Fueter-Sce map and the generalized CK extension  we characterize the range and the kernel of the Fueter-Sce map. Furthermore, we focus on studying the Clifford-Appell-Fock space and the Clifford-Appell-Hardy space. Finally, using the polyanalytic Fueter-Sce theorems we obtain a new family of polyanalytic monogenic polynomials, which extends to higher dimensions the Clifford-Appell polynomials.
\end{abstract}
\noindent \textbf{Key words:} Fueter-Sce mapping theorem, generalized CK-extension, Clifford-Appell polynomials, Reproducing kernels, Fock space, Hardy space

\noindent \textbf{AMS Classification:} 30G35, 30H20

\tableofcontents

\section{Introduction}
The Fueter-Sce theorem (see \cite{CSS2,S}) and the generalized Cauchy-Kovalevskaya (CK) extension (see \cite{green}) are the main tools of Clifford analysis to transform analytic functions of one (real or complex) variable into monogenic functions. These functions are null-solutions of the generalized Cauchy-Riemann operator in $ \mathbb{R}^{n+1}$. The so-called Fueter-Sce map is the differential operator $ \Delta^{\frac{n-1}{2}}_{\mathbb{R}^{n+1}}$, where $n$ is a fixed odd number and $\Delta_{\mathbb{R}^{n+1}}$ is the Laplacian in $n+1$ variables. 
While the generalized CK-extension is written as a suitable power series of  $\underline{x} \partial_{x_0}$ (see Theorem \ref{GCK}), where $ \underline{x}$ is the vector variable in $ \mathbb{R}^{n}$ and $x_0$ is the real variable.

Both maps have been proved to be related but are not equal. Indeed, the generalized CK extension is an isomorphism, while the Fueter-Sce map is a pointwise differential operator of $n+1$ variables, and therefore it is not injective. However, in some cases, these two extension tools yield identical actions (up to a multiplicative constant). For instance, these are the cases of the exponential, the trigonometric and hyperbolic functions. This is one of the reason why, the Fueter-Sce map has successfully been used in the literature to extend these algebraic functions and some related functional spaces to $ \mathbb{R}^{n+1}$.  Our first goal is thus to study the relations under which both actions coincide.

\medskip

The connection between the generalized CK extension and the Fueter-Sce map is crucial to figure out the behaviour of the Fueter-Sce map applied to the basic element $x^k$, where $x=x_0+ \underline{x} \in \mathbb{R}^{n+1}$ is a Clifford paravector (see Section 3) and $k \in \mathbb{N}_0$. Indeed, as far as the authors know, it is hard to explicitly compute the action $ \Delta^{\frac{n-1}{2}}_{n+1} x^k$ via direct computations. Instead, this action is easily computed with the help of the generalized CK extension (see \cite{DDG}). Such an action have been computed to be
\begin{equation}
\label{fund0}
\Delta^{\frac{n-1}{2}}_{\mathbb{R}^{n+1}}(x^{k})=\frac{(-1)^{\frac{n-1}{2}}2^{n-1}}{(n-1)!} \left[\Gamma\left(\frac{n+1}{2}\right)\right]^2\frac{k!}{(k-n+1)!}P^n_{k+1-n}(x), \qquad k> n-1
\end{equation}
where $P_{k+1-n}^n(x)$ are the so called Clifford-Appell polynomials, and are defined on Section 2. 
\\We note that polynomials $P_{k+1-n}^n(x)$ satisfy the so called Appell property. In classical terms a sequence of polynomials $ \{A_k\}_{k \in \mathbb{N}}$ of the real variable $x_0$ is called Appell sequence if 
$$ \frac{d}{d x_0} A_k=k A_{k-1}.$$
These polynomials were first introduced by the French mathematician Paul Emile Appell in  \cite{Appell1880}. The well-known Hermite, Bernoulli and Euler polynomials form an interesting family of examples of such polynomials.
Appell polynomials are important in several areas of mathematics and their applications. For example they are relevant in probability theory and stochastic process since they can be related to random variables, see \cite{Bao2015}, they were used also to study optimal stopping problems related to Lévy process in \cite{Salm2011}. Also in  Clifford analysis Appell polynomials were investigated and studied by several authors with respect to the hypercomplex derivative, see \cite{CFM,CMF, MF2007,Pena2011}. Moreover, they were used to study a Bargmann-Fueter transform, see \cite{DKS}. 
\\ Formula \eqref{fund0} is essential in the whole paper and it is crucial to address the following problems.
\newline
\newline
\textbf{Problem}
Is it possible to expand Taylor series the axially monogenic functions in terms of the Clifford-Appell polynomials?
\newline
\newline
In Clifford analysis the Taylor series expansion in terms of the Fueter polynomials is well-known. Fueter polynomials describe all monogenic functions as they play the same role in Clifford analysis as the monomials $ x_0^{\alpha_0}x_{1}^{\alpha_1}x_{2}^{\alpha_2}...x_{n}^{\alpha_n}$
in several real variables analysis. However there is a simpler way to describe monogenic functions defined on axially symmetric domains. This description has axially monogenic functions as central objects, see \cite{Somm1}. By restricting our analysis to axially monogenic functions, we replace the more complicated approach of Fueter polynomials, by the simpler and more accessible approach of Clifford Appell polynomials which play the role of $x_0^k$ in one-variable real analysis. 


\medskip
We will study elementary functions like the exponential, trigonometric and hyperbolic functions in the monogenic setting in terms of the Clifford-Appell polynomials. 
For this set of functions the actions of the Fueter-Sce map and generalised CK extension coincides. The methodology is to apply the Fueter-Sce map (or the generalized Cauchy-Kovalevskaya extension) to a slice monogenic function. A similar approach is done in \cite{G}. Even in that case the authors study the effects of the Fueter-Sce map acting on a slice monogenic function. However, they do not use the expansion in series for the slice monogenic functions and so they did not get a full description of elementary functions in terms of spherical-monogenic polynomials. 
We overcome this issue by applying the Fueter-Sce map to a slice monogenic function expanded in series and by using formula \eqref{fund0}. Therefore, we get expansions series written in terms of the Clifford-Appell polynomials.
\\ Another interesting problem concerning the elementary functions is the following one.
\newline
\newline
\textbf{Problem}
Is it possible to extend the algebraic identities that hold in complex analysis to the trigonometric and hyperbolic functions in Clifford analysis?
\newline
\newline
The key point to solve this problem is to work with a suitable product. In literature, the only product that preserves the monogenicity is the so called CK-product, see \cite{green}. This product is related to the classical CK-extension, whose main idea is to give a catheterization of solutions of suitable systems of PDE's by their restriction to a submanifold of codimension one. If the PDE involved is the Cauchy-Riemann equations in $ \mathbb{R}^{n+1}$, we can characterize a monogenic function by its restriction to the hyperplane $x_0=0$.
The CK-product, denoted by $\odot_{CK}$, associates to any pair of monogenic functions $f$, $g$  the CK extension of the punctual product between $f$ and $g$ restricted to $\mathbb{R}^n$, namely
$$ f \odot_{CK} g=CK[ f_{|\mathbb{R}^n} \cdot g_{| \mathbb{R}^n}].$$
 
In this paper, instead of working with the CK-product, we introduce a new product, called generalized CK-product and denoted by $\odot_{GCK}$. In this specific case we work in codimension $n$, i.e. we characterize axially monogenic functions in $\mathbb{R}^{n+1}$ by their restrictions to the real line. This product, compared to the CK product (see \eqref{Ck}), gives a more natural multiplication rule for the product of Clifford-Appell polynomials:
$$ (P_k^n \odot_{GCK} P_s^n)(x)=P_{k+s}^n(x), \qquad \forall k,s \geq 0.$$
\\ We focus on solving the following problem.
\newline
\newline
\textbf{Problem}
Study the range and the kernel of the Fueter-Sce map.
\newline
\newline
In order to solve the this problem we develop a general apporach that can be adapted for specific modules of monogenic functions. In this paper we give some examples related to the Hardy, Bergman, Dirichlet and Fock spaces in the axially monogenic setting. Moreover, by means of the generalized CK-extension we define different axially monogenic Fock and Hardy spaces. We prove that it is possible to write their reproducing kernels in terms of the Clifford-Appell polynomials. Using this Appell system we introduce the notions of creation and annihilation operators for the Fock space and we study different properties of the shift and backward operators in the Hardy space. 
Our last task in this setting is solving the following problem.
\newline
\newline
\textbf{Problem} Is it possible to extend the Clifford-Appell polynomials in the setting of polyanalytic functions?
\newline
\newline
In complex analysis, the theory of polyanalytic functions is an extension of the classical theory of holomorphic functions. Its main goal is to study functions in the kernel of a certain power of the Cauchy-Riemann operator. In the monogenic  setting, a theory of polyanalytic functions of any order is presented in \cite{B1976}. However, there is no full description of a basis of the module of polyanalytic functions. In this work, we provide a series expansion of a polyanalytic function of order $m+1$ in terms of suitable homogenous polynomials.

\medskip
Summarizing, in this paper we will solve the following problems

\begin{itemize}
\item find a Taylor series expansion in terms of the Clifford-Appell polynomials for axially monogenic functions,
\item extend the classical identities of complex analysis, where trigonometric and hyperbolic functions are involved, to the Clifford setting,
\item characterize the range and the kernel of the Fueter-Sce map. 
\item generalize the Clifford-Appell polynomials to the polyanalytic setting.
\end{itemize}

\emph{Outline of the paper}: Besides this introduction the paper consists of 8 sections. In Section 2, we recall some results on Clifford analysis such as the Fueter-Sce mapping theorem, the generalized CK theorem and the connection between them. In Section 3, we study the action of the Fueter-Sce map on the Clifford paravector monomials $x^k$ and relate them to the Clifford-Appell polynomials. We also investigate the hypothesis under which the Fueter-Sce map and the generalized CK-extension coincide, up to a constant. The conditions hold for the elementary functions in the monogenic setting. In Section 4, we determine the range and kernel of the Fueter-Sce map and study how the Fueter-Sce map defines an isometry between suitable subspaces of monogenic and slice monogenic functions. Section 5 is devoted to the Clifford-Fock module. We study the creation and annihilation operators in this setting. In Section 6, we focus on studying the Clifford-Hardy module and we investigate the properties of the shift and backward operators in the Clifford setting. Finally, in Section 7, we introduce a generalized family of Clifford-Appell polynomials in the polyanalytic monogenic setting. This construction is based on the polyanalytic monogenic Fueter-Sce mapping theorem, see \cite{ADS}.

\section{Preliminary results}
The real Clifford algebra $ \mathbb{R}_n$ is generated by the standard basis $ \{e_1,...,e_n\}$ of $ \mathbb{R}^n$, whose elements satisfy the multiplicative relations $ e_{\ell}e_m+e_{m}e_{\ell}=0$ if $ \ell \neq m$ and $e^{2}_{\ell}=-1$, otherwise. Any element in the Clifford algebra can be written as $$ \sum_{A} e_A x_A,$$ where for any multi-index $ A= \{\ell_1,..., \ell_r\} \subset \{1,2,...,n\}$, with $ \ell_1 <...<\ell_{r}$, we put $e_{A}=e_{\ell_1}e_{\ell_2}...e_{\ell_r}$, $e_{\emptyset}=1$. Any arbitrary element $x \in \mathbb{R}_n$ can be written in the form 
$$  x= \sum_{k=0}^n [x]_k, \qquad \hbox{where} \quad [x]_{k}= \sum_{|A|=k} x_A e_A,$$
where $[.]_k: \mathbb{R}_n \to \mathbb{R}^{(k)}_n$ is the canonical projection of $ \mathbb{R}_n$ onto the space of $k$-vectors $\mathbb{R}^{(k)}_n:=\hbox{span} \{e_A \, : \, |A|=k\}$. We define the real part of a Clifford number $x$ as $ \hbox{Re}(x)=[x]_0$.
We denote the norm of an element in the Clifford algebra as $|x|:= \left(\sum_{A} x_A^2\right)^{\frac{1}{2}}$. We recall that for every $x$, $y \in \mathbb{R}_n$ we have $|xy| \leq 2^{\frac{n}{2}} |x| |y|$, see \cite{BDS}.

An element $(x_0,x_1,...,x_n) \in \mathbb{R}^{n+1}$ will be identified with the Clifford element $x=x_{0}+ \underline{x}=x_{0}+ \sum_{\ell=1}^n x_{\ell} e_{\ell}$. Elements of this form are called paravectors. The norm of $x \in \mathbb{R}^{n+1}$ is defined as $|x|^2=x_{0}^2+x_{1}^2+...+x_{n}^2$. The conjugate of $x$ is defined by $\bar{x}= x_0- \underline{x}=x_{0}- \sum_{\ell=1}^n x_{\ell} e_{\ell}$. 
We shall denote by $ \mathbb{S}^{n-1}$ the $(n-1)$-dimensional sphere in $ \mathbb{R}^{n+1}$, i.e.
$$ \mathbb{S}^{n-1}= \{\underline{x}=e_1x_1+...+e_nx_{n} \, | \, x_{1}^2+...+x_n^2=1\}.$$
For $I \in \mathbb{S}^{n-1}$ we have $I^{2}=-1$.
The vector space $\mathbb{C}_I:=\{u+Iv \,: \, u,v \in \mathbb{R}\},$ passing through 1 and $I \in \mathbb{S}^{n-1}$, is isomorphic to the complex plane. 
\begin{defn}
\label{slicemono1}
Let $\Omega \subset \mathbb{R}^{n+1}$ be an open set. A function $f: \Omega \to \mathbb{R}_n$ is said to be (left) slice monogenic if, for every $I \in \mathbb{S}^{n-1}$, the restrictions of $f$ to the complex plane $\mathbb{C}_I$ are holomorphic on $ \Omega \cap \mathbb{C}_I$ i.e.
$$ \left(\partial_u+I \partial_v\right)f(u+Iv)=0.$$
\end{defn}
We denote the set of left slice monogenic function on $U$ by $ \mathcal{SM}(U)$.
For for more details about the slice monogenic functions see \cite{CKS, CGS2013, CSS3, CSS4, RW}.
\\ Slice monogenic functions have good properties when defined in the following sets.
\begin{defn}
Let $U \subseteq \mathbb{R}^{n+1}$ be a domain. We say that $U$ is a \emph{slice domain (or s-domain)} if $U \cap \mathbb{R}$ is non empty and if $ \mathbb{C}_I \cap U$ is a domain in $ \mathbb{C}_I$ for all $I \in \mathbb{S}^{n-1}.$
\end{defn}

\begin{defn}
Let $U \subseteq \mathbb{R}^{n+1}$ be an open set. We say that $U$ is an \emph{axially symmetric domain} if for every $x= \hbox{Re}(x)+I | \underline{x}| \in U$ the whole sphere $\hbox{Re}(x)+|\underline{x}| \mathbb{S}^{n-1}:= \{\hbox{Re}(x)+I | \underline{x}|; \, I \in \mathbb{S}^{n-1}\}$ is contained in $U$.
\end{defn}

We can also give the definition of axially symmetric slice domain in the following way. 
\\ A set $ \Omega \subset \mathbb{R}^{n+1}$ is an axially symmetric slice domain if and only if there exist a complex intrinsic domain $D \subset \mathbb{C}$ such that
$$ \Omega=D \times \mathbb{S}^{n-1}= \{u+vI: \, (u,v) \in D\, , I \in \mathbb{S}^{n-1}\}.$$
The slice monogenic functions defined on axially symmetric slice domain enjoy good properties as stated in the following result.

\begin{thm}[Representation formula \cite{CSS3}]
\label{moons}
Let $U \subseteq \mathbb{R}^{n+1}$ be an axially slice symmetric open set and $ \mathcal{U} = \{(u,v) \in \mathbb{R}^2 \,; \, u+ \mathbb{S}^{n-1}v \subset U\}$. A function $f:U  \to \mathbb{R}_n$ is (left) slice monogenic if it is of the form
$$ f(x)= \alpha(u,v)+I \beta(u,v), \qquad x=u+Iv \in U,$$
where $ \alpha$, $ \beta: \mathcal{U} \to \mathbb{R}_n$ are differentiable functions that satisfy the "even-odd" conditions
\begin{equation}
\label{evenodd}
\alpha(u,v)=\alpha(u,-v), \qquad \beta(u,v)=- \beta(u,-v), \qquad \hbox{for all} \quad (u,v) \in \mathcal{U}.
\end{equation}
and furthermore satisfy the Cauchy-Riemann equations
$$ \begin{cases}
\partial_u \alpha- \partial_v \beta=0,\\
\partial_v \alpha+ \partial_u \beta=0.
\end{cases}
$$
\end{thm}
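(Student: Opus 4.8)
The plan is to establish the full equivalence suggested by the name ``Representation formula'': functions of the stated form are exactly the slice monogenic ones. The implication ``form $\Rightarrow$ slice monogenic'' is a direct computation. First I would check that $f(x)=\alpha(u,v)+I\beta(u,v)$ is well defined on $U$: the only ambiguity in the paravector representation $x=u+Iv$ is the freedom of $I$ on the real axis $v=0$ together with the identification $u+Iv=u+(-I)(-v)$; the odd condition forces $\beta(u,0)=0$, so $f(u)=\alpha(u,0)$ does not depend on $I$, while the parity conditions \eqref{evenodd} make the two representations agree in general. Then, fixing $I\in\mathbb{S}^{n-1}$ and differentiating along the slice $\mathbb{C}_I$, one gets $(\partial_u+I\partial_v)f=(\partial_u\alpha-\partial_v\beta)+I(\partial_v\alpha+\partial_u\beta)=0$ by the Cauchy--Riemann system, using $I^2=-1$; hence $f|_{\mathbb{C}_I}$ is holomorphic for every $I$ and $f\in\mathcal{SM}(U)$.

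For the converse, I would start from $f\in\mathcal{SM}(U)$, fix $I\in\mathbb{S}^{n-1}$, and define on $\mathcal{U}$ the candidate components $\alpha(u,v)=\tfrac12\big[f(u+Iv)+f(u-Iv)\big]$ and $\beta(u,v)=\tfrac{I}{2}\big[f(u-Iv)-f(u+Iv)\big]$. These are even, respectively odd, in $v$ by inspection, so \eqref{evenodd} holds, and $\alpha+I\beta=f$ on $\mathbb{C}_I$ by construction. To obtain the Cauchy--Riemann system I would set $P(u,v)=f(u+Iv)$ and $Q(u,v)=f(u-Iv)$ and note that slice monogenicity gives $(\partial_u+I\partial_v)P=0$ and $(\partial_u-I\partial_v)Q=0$ (the latter by the chain rule, since $Q(u,v)=P(u,-v)$); substituting $\alpha=\tfrac12(P+Q)$ and $\beta=\tfrac{I}{2}(Q-P)$ into $\partial_u\alpha-\partial_v\beta$ and into $\partial_v\alpha+\partial_u\beta$ and grouping the $P$- and $Q$-terms, each expression reduces to a combination of $(\partial_u+I\partial_v)P$ and $(\partial_u-I\partial_v)Q$ and hence vanishes.

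It then remains to show that this single pair $(\alpha,\beta)$ reproduces $f$ on \emph{every} slice, i.e. $f(u+Jv)=\alpha(u,v)+J\beta(u,v)$ for all $J\in\mathbb{S}^{n-1}$. Set $g(u+Jv):=\alpha(u,v)+J\beta(u,v)$; by the first part, which used only \eqref{evenodd} and the Cauchy--Riemann system just verified, $g$ is slice monogenic on $U$, and $g=f$ on $\mathbb{C}_I$, in particular on the real axis $U\cap\mathbb{R}$ (where $\beta=0$ and both equal $\alpha(u,0)=f(u)$). The main obstacle is upgrading this one-slice agreement to agreement on all of $U$. This is exactly the identity principle for slice monogenic functions on the axially symmetric slice domain $U=D\times\mathbb{S}^{n-1}$: since $U\cap\mathbb{R}\neq\emptyset$ is shared by all slices and accumulates in each $\mathbb{C}_J\cap U$, the classical holomorphic identity theorem applied slice-by-slice forces $f=g$ on each $\mathbb{C}_J$, hence on $U$. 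Equivalently, $g$ is the structure formula $f(u+Jv)=\tfrac12(1-JI)f(u+Iv)+\tfrac12(1+JI)f(u-Iv)$, which rearranges precisely to $\alpha(u,v)+J\beta(u,v)$; everything away from this propagation step is routine differentiation.
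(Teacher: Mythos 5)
The paper does not prove this statement: it is recalled verbatim from \cite{CSS3} as a preliminary, so there is no internal proof to compare against. Your argument is the standard one from that reference and is correct: the sufficiency direction (the only direction the theorem as literally stated asserts) is the direct computation $(\partial_u+I\partial_v)(\alpha+I\beta)=(\partial_u\alpha-\partial_v\beta)+I(\partial_v\alpha+\partial_u\beta)$, and your converse via $\alpha=\tfrac12(P+Q)$, $\beta=\tfrac{I}{2}(Q-P)$ plus slice-by-slice application of the holomorphic identity theorem (legitimate here because $U$ is an axially symmetric \emph{slice} domain, so each $\mathbb{C}_J\cap U$ is a domain meeting $U\cap\mathbb{R}$ in a set with accumulation points) is exactly how the representation formula is established. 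The one thing I would trim is the closing sentence: invoking the structure formula $f(u+Jv)=\tfrac12(1-JI)f(u+Iv)+\tfrac12(1+JI)f(u-Iv)$ as an ``equivalent'' justification is circular, since that formula is precisely the statement being proved; the identity-principle propagation you give just before it is the actual argument and suffices.
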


\begin{rem}
If we assume that $U \cap \mathbb{R} \neq \emptyset$ than the function $f$ is well defined at the real points since $ \beta(x_0,0)=0$ for all $x=x_0 \in \mathbb{R}$.
\end{rem}

One of the main properties of the slice monogenic functions is that we can develop them in power series, see \cite{CSS4}.

\begin{thm}
\label{expa}
Let $f$ be a slice monogenic function on a s-domain $U \subseteq \mathbb{R}^{n+1}$. Then if $0 \in U$, the function can be represented in power series
$$ f(x)= \sum_{n=0}^\infty x^n a_{n},$$
on the ball $B(0,R)$, where $R$ is the largest positive real number such that $B(0,R)$ is contained in $U$.
\end{thm}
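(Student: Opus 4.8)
The plan is to reduce the statement to the classical Taylor expansion of a holomorphic function of one complex variable on each slice $\mathbb{C}_I$, and then to show that the resulting coefficients are independent of $I$, so that the slice-wise expansions glue to a single power series in the paravector variable $x$. First I would fix an imaginary unit $I \in \mathbb{S}^{n-1}$ and consider the restriction $f_I := f|_{U \cap \mathbb{C}_I}$. By Definition \ref{slicemono1}, $f_I$ is a holomorphic function of the complex variable $z = u + Iv$ with values in the finite-dimensional real algebra $\mathbb{R}_n$, once we identify $\mathbb{C}_I \cong \mathbb{C}$. Regarding $\mathbb{R}_n$ as a right $\mathbb{C}_I$-module and expanding each component in the classical one-variable Taylor series, one obtains on the largest disk $B(0,R)\cap \mathbb{C}_I \subseteq U \cap \mathbb{C}_I$ centred at the origin an expansion of the form
$$ f_I(z) = \sum_{m=0}^\infty z^m \, a_m^I, \qquad a_m^I \in \mathbb{R}_n, $$
where the coefficients are given by $a_m^I = \tfrac{1}{m!}\,\partial_z^m f_I(0)$, the derivative being the complex (slice) derivative along $\mathbb{C}_I$.

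The crucial step is to show that $a_m^I$ does not depend on $I$. Since $0 \in U$ and $U$ is an s-domain, $0$ lies on the real axis $U \cap \mathbb{R}$, and along $\mathbb{C}_I$ the Cauchy--Riemann condition $(\partial_u + I\partial_v) f_I = 0$ gives $\partial_z f_I = \partial_u f_I$. Iterating and evaluating at the real point $0$, the derivative $\partial_z^m f_I(0)$ reduces to the $m$-th derivative of $f$ taken along the real axis, $\partial_{x_0}^m f(x_0)|_{x_0 = 0}$. This expression manifestly does not involve $I$, so we may write $a_m := a_m^I$ for all $I$. Equivalently, one can invoke the Representation Formula (Theorem \ref{moons}): writing $f = \alpha + I\beta$ with $\beta(x_0,0)=0$, the coefficients are determined by $\alpha$ and its derivatives at $(0,0) \in \mathcal{U}$, which are intrinsic and independent of the chosen slice.

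Finally, I would assemble the global expansion. The ball $B(0,R)$ is centred at a real point and is therefore axially symmetric, so every $x \in B(0,R)$ lies in some slice $\mathbb{C}_I$ with $x \in B(0,R) \cap \mathbb{C}_I$; for such $x$ the slice-wise expansion yields $f(x) = \sum_{m=0}^\infty x^m a_m$. Because $f_I$ is holomorphic on the full disk $B(0,R)\cap \mathbb{C}_I$ of radius $R$, its classical radius of convergence is at least $R$, whence $\limsup_m |a_m|^{1/m} \le 1/R$ and the series converges on all of $B(0,R)$. The main obstacle is precisely the $I$-independence of the coefficients: without it the slice expansions could not be glued into a single paravector power series, and it is here that the slice structure (through the Cauchy--Riemann equations and the Representation Formula) is essential. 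The remaining convergence and gluing arguments are then a direct consequence of classical one-variable complex analysis together with the axial symmetry of $B(0,R)$.
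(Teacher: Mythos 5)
Your argument is correct and is essentially the standard proof: the paper itself only recalls this theorem from \cite{CSS4} without giving a proof, and the proof there proceeds exactly as you do, via the slice-wise Taylor expansion (Splitting Lemma) together with the observation that the $m$-th slice derivative at the real point $0$ reduces to $\partial_{x_0}^m f(0)$ along the real axis and is therefore independent of $I$, after which axial symmetry of $B(0,R)$ glues the slice expansions. The only cosmetic correction is that $\mathbb{R}_n$ should be regarded as a \emph{left} $\mathbb{C}_I$-vector space when splitting $f_I$ into $\mathbb{C}_I$-valued holomorphic components, so that the powers $z^m$ act on the left while the coefficients $a_m$ sit on the right, matching the form $\sum_{m} x^m a_m$ in the statement.
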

Now, we recall how to induce slice monogenic functions from the so-called holomorphic intrinsic functions.

\begin{defn}[Holomorphic intrinsic functions]
A holomorphic function $f(z)=\alpha(u,v)+i \beta(u,v)$ is said to be \emph{intrinsic} if it is defined in an intrinsic complex domain $D$ and satisfies that $f(z)^c=f(z^c)$, where $.^c$ denotes the complex conjugation. This means that real part and imaginary part of $f$ satisfy the condition \eqref{evenodd}, or equivalently that the restriction $f|_{\mathbb{R}}$ of $f$ to the real line is $ \mathbb{R}$-valued. 
\end{defn}
We denote by $\mathcal{O}(D)$ the space of holomorphic complex functions on $D$ and by $ \mathcal{H}(D)$ the (real) vector sub-space of $ \mathcal{O}(D)$ complex holomorphic intrinsic functions i.e.
\begin{eqnarray*}
\mathcal{H}(D)&=& \{f \in \mathcal{O}(D)\, : \, \alpha(u,v)=\alpha(u,-v), \, \beta(u,v)=-\beta(u,-v)\}\\
&=& \{f \in \mathcal{O}(D)\, : \, f|_{\mathbb{R}} \quad \hbox{is real valued}\}.
\end{eqnarray*}
The intrinsic holomoprhic functions defined in $D \subset \mathbb{C}$ induce slice monogenic functions on $\Omega= D \times \mathbb{S}^{n-1}$ by means of the extension map 
$$ S_{\mathbb{C}}: \mathcal{H}(D) \otimes \mathbb{R}_n \to \mathcal{SM}(\Omega), \qquad \alpha(u,v)+i \beta(u,v) \mapsto \alpha(x_0,|\underline{x}|)+I \beta(x_0,| \underline{x}|),$$
which consists of replacing the complex variables $z=u+iv$ by the paravector variable $x=x_0+ \underline{x}$, where the complex unit $i$ is replaced by the unit vector $I:= \frac{\underline{x}}{| \underline{x}|}$ in $\mathbb{R}^n$.

\medskip

It is also possible to extend real-analytic functions to slice monogenic functions. Let us consider $\Omega_1:= D \cap \mathbb{R}$. We denote by $ \mathcal{A}(\Omega_1)$ the space of rela-valued analytic functions defined on $\Omega_1$ with unique holomorphic extensions to $ D$. Then, we define the holomorphic extensions map $C:= \hbox{exp}(iv\partial_u)$, i.e.
$$ C: \mathcal{A}(\Omega_1) \to \mathcal{H}(D) \qquad f_{0}(u) \mapsto \sum_{j=0}^\infty \frac{(iv)^j}{j!} f^{(j)}_{0}(u).$$
By means of this we can define the slice monogenic extension map as $S= S_{\mathbb{C}} \circ C= \hbox{exp}(\underline{x} \partial_{x_0})$ i.e.
\begin{equation}
\label{slice1}
S: \mathcal{A}(\Omega_1) \otimes \mathbb{R}_n \to \mathcal{SM}(\Omega), \qquad f_0(x_0) \mapsto \sum_{j=0}^\infty \frac{\underline{x}^j}{j!}f^{(j)}_{0}(x_0).
\end{equation}
Now, we can state the following extension result.
\begin{thm}
Under the conditions considered above, we have the following isomorphic isomorphisms
$$ \mathcal{SM}(\Omega) \simeq \mathcal{A}(\Omega_1) \otimes \mathbb{R}_n \simeq \mathcal{H}(D) \otimes \mathbb{R}_n.$$
We have the following commutative diagram
\[
\begin{tikzcd}[row sep = 2em, column sep = 5em]
	\mathcal{A}(\Omega_1)\otimes \R_m \arrow[r, "C", ] \arrow[dr, "S", labels=below] & \mathcal{H}(D) \otimes \R_m \arrow[d, "S_\C"] \\
	& \mathcal{SM}(\Omega)
\end{tikzcd}
\]
where the map $S=S_\C\circ C =\exp(\underline{x}\partial_{x_0})$, given by $S[f_0](x) = \sum_{j=0}^\infty \frac{\underline{x}^j}{j!} \, f_0^{(j)}(x_0)$,
is inverted by the restriction operator to the real line, i.e.\ $S[f_0]\big|_{\underline{x}=0}=f_0$. 
\end{thm}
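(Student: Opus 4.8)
The plan is to establish the two isomorphisms separately, then compose them and read off the explicit formula for $S$.

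First I would treat the holomorphic extension $C=\exp(iv\partial_u)$. By definition every $f_0\in\mathcal{A}(\Omega_1)$ is a real-valued real-analytic function admitting a unique holomorphic extension to $D$, and its Taylor expansion about each real point $u\in\Omega_1$ yields $C[f_0](u+iv)=\sum_{j\ge 0}\frac{(iv)^j}{j!}f_0^{(j)}(u)$, which is exactly $\exp(iv\partial_u)f_0$ and converges by real-analyticity. Writing $C[f_0]=\alpha+i\beta$ and separating even and odd powers of $v$ shows that $\alpha(u,v)=\alpha(u,-v)$ and $\beta(u,v)=-\beta(u,-v)$, so $C[f_0]\in\mathcal{H}(D)$; conversely the restriction to $\mathbb{R}$ of any intrinsic holomorphic function is real-valued and real-analytic. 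Hence $C$ and the restriction operator $\,\cdot\,|_{\mathbb{R}}$ are mutually inverse, giving $\mathcal{A}(\Omega_1)\simeq\mathcal{H}(D)$, and tensoring with $\mathbb{R}_n$ preserves this.

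Next I would handle $S_{\mathbb{C}}$. Given $\alpha+i\beta\in\mathcal{H}(D)\otimes\mathbb{R}_n$, the even–odd conditions \eqref{evenodd} are precisely the intrinsic conditions and in particular force $\beta(x_0,0)=0$, which makes $\alpha(x_0,|\underline{x}|)+I\beta(x_0,|\underline{x}|)$ well defined at real points where $I=\underline{x}/|\underline{x}|$ is not specified. Since $\alpha+i\beta$ is holomorphic, $\alpha$ and $\beta$ satisfy the Cauchy–Riemann system, so by the Representation Formula (Theorem \ref{moons}) the resulting function lies in $\mathcal{SM}(\Omega)$, $\Omega=D\times\mathbb{S}^{n-1}$; thus $S_{\mathbb{C}}$ maps into $\mathcal{SM}(\Omega)$. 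Surjectivity is the converse direction of Theorem \ref{moons}: every $f\in\mathcal{SM}(\Omega)$ has the axial form $\alpha+I\beta$ with $\alpha,\beta$ obeying \eqref{evenodd} and Cauchy–Riemann, hence is the image of $\alpha+i\beta$. Injectivity follows by extracting the components along $1$ and $I$ on a fixed slice $\mathbb{C}_I$. Therefore $S_{\mathbb{C}}$ is an isomorphism, inverted by restriction to a slice together with the identification $\mathbb{C}_I\simeq\mathbb{C}$, and the chain $\mathcal{SM}(\Omega)\simeq\mathcal{A}(\Omega_1)\otimes\mathbb{R}_n\simeq\mathcal{H}(D)\otimes\mathbb{R}_n$ follows.

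Finally I would compose the maps. Applying $S_{\mathbb{C}}$ to $C[f_0]$ replaces $u\mapsto x_0$, $v\mapsto|\underline{x}|$ and $i\mapsto I=\underline{x}/|\underline{x}|$, so using $I|\underline{x}|=\underline{x}$ the $j$-th term becomes $\frac{(I|\underline{x}|)^j}{j!}f_0^{(j)}(x_0)=\frac{\underline{x}^j}{j!}f_0^{(j)}(x_0)$, giving the stated formula for $S=S_{\mathbb{C}}\circ C$ and matching \eqref{slice1}; since $\underline{x}$ is constant in $x_0$ this is literally $\exp(\underline{x}\partial_{x_0})f_0$, which makes the diagram commute. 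Restricting to the real line sets $\underline{x}=0$, so only the $j=0$ term survives and $S[f_0]\big|_{\underline{x}=0}=f_0$, confirming the inversion. The main obstacle is the step for $S_{\mathbb{C}}$: both its well-definedness at real points and its bijectivity rest entirely on the interplay between the intrinsic/even–odd conditions \eqref{evenodd} and the Representation Formula, so the crux is to invoke Theorem \ref{moons} in both directions rather than to perform any explicit computation.
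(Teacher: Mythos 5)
Your proof is correct, and it follows the standard route. Note that the paper itself states this theorem without proof, as recalled background in its preliminaries (the content is the classical correspondence between real-analytic, intrinsic holomorphic, and slice monogenic functions from the slice-function literature, e.g.\ the Representation Formula of Theorem \ref{moons}), so there is no in-paper argument to compare against; your write-up supplies exactly the missing details. The one place where I would tighten the wording is the injectivity of $S_{\mathbb{C}}$: since $\alpha$ and $\beta$ are $\mathbb{R}_n$-valued, one cannot literally ``extract the components along $1$ and $I$'' on a single slice, because $I\beta$ need not be linearly independent of $\alpha$ inside the Clifford algebra. The clean argument uses the even--odd conditions \eqref{evenodd}: if $\alpha(u,v)+I\beta(u,v)=0$ for all $(u,v)$, then replacing $v$ by $-v$ gives $\alpha(u,v)-I\beta(u,v)=0$, and adding/subtracting yields $\alpha=0$ and $\beta=0$. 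With that small repair, all three steps (the bijectivity of $C$ via Taylor expansion and parity of powers of $v$, the bijectivity of $S_{\mathbb{C}}$ via both directions of the Representation Formula, and the computation $(I|\underline{x}|)^j=\underline{x}^j$ giving $S=\exp(\underline{x}\partial_{x_0})$ and $S[f_0]\big|_{\underline{x}=0}=f_0$) are sound.
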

Another important extension of the holomorphic function theory to higher dimensions is given by the following.

\begin{defn}[Monogenic function]
\label{mono1}
Let $U \subset \mathbb{R}^{n+1}$ be an open set and let $f:U \to  \mathbb{R}_n$ be a function of class $C^1$. We say that $f$ is (left) monogenic on $U$ if
$$ \partial f(x):= (\partial_{x_0}+ \partial_{\underline{x}})f(x)= \left(\partial_{x_0}+ \sum_{i=1}^{n} e_i \partial_{x_i}\right)f(x)=0 \qquad \forall x \in U.$$
We denote by $ \mathcal{M}(U)$ the set of left monogenic functions on $U$.
\end{defn}
We observe that for $n=1$, monogenic functions on $ \mathbb{R}^2$ correspond to holomorphic function of the variable $x_0+e_1x_1$.
For more information about the notion of monogenic functions, see \cite{BDS, green, G}.
\\ A classic way to characterize monogenic functions  is the CK-extension, see \cite{BDS}. This is achieved by considering the restriction to the hyperplane $x_0=0$ of a monogenic function $f$. Precisely, the CK-extension of $f( \underline{x})$  is given by
$$ CK[f(\underline{x})](x)= \sum_{j=0}^\infty \frac{(-1)^j}{j!} x_{0}^j \partial_{\underline{x}}^j[f(\underline{x})].$$

A link between the set of slice monogenic functions and monogenic functions is the notion of axially monogenic function.
\begin{defn}[Axially monogenic functions]
Let $U$ be an axially symmetric slice domain in $\mathbb{R}^{n+1}$. A function $f: U \to \mathbb{R}_n$ is said to be axially monogenic, if it is monogenic and it is of the form
$$ f(x_0+ \underline{x})= A(x_0,|\underline{x}|) + \underline{\omega}B(x_0,|\underline{x}|), \qquad \underline{\omega}= \frac{\underline{x}}{|\underline{x}|},$$
where $A$ and $B$ are Clifford-valued functions that satisfy \eqref{evenodd}. We denote by $ \mathcal{AM}(U)$ the right $ \mathbb{R}_n$-module of axially monogenic functions on $U$.
\end{defn}
In literature, as for the case of slice monogenic functions, there exist an isomorphism between the modules of real-analytic Clifford-valued functions on the real line and the module of axially monogenic functions in $ \mathbb{R}^{n+1}$. This is a particular result of the generalized CK-extension. In this case we characterize axially monogenic functions in $\mathbb{R}^{n+1}$ by their restrictions to the real line.
\begin{thm}[Generalized CK-extension \cite{green}]
\label{GCK}
Let $\Omega_1 \subset \mathbb{R}$ be a real domain and consider $f_{0}(x_0)$ be a Clifford-valued analytic function in $\Omega_1$ . Then, there exists a unique sequence $ \{f_j( x_0)\}_{j=1}^\infty \subset \mathcal{A}(\Omega_1) \otimes \mathbb{R}_n$ such that the series
$$ f(x_0,\underline{x})= \sum_{j=0}^\infty \underline{x}^j f_{j}(x_0),$$
converges in an axially symmetric slice $(n+1)$-dimensional neighbourhood $ \Omega \subset \mathbb{R}^{n+1}$ of  $\Omega_1$ and its sum is a monogenic function i.e., $(\partial_{x_0}+\partial_{\underline{x}})f(x_0, \underline{x})=0$.
\\Furthermore, the sum $f$ is formally given by the expression
\begin{equation}
\label{Exx}
f(x_0, \underline{x})= \Gamma \left(\frac{n}{2}\right) \left( \frac{|\underline{x}|\partial_{x_0}}{2} \right)^{- \frac{n}{2}} \left(\frac{|\underline{x}|\partial_{x_0}}{2} J_{\frac{n}{2}-1}\left( |\underline{x}|\partial_{x_0} \right)+ \frac{\underline{x} \partial_{x_0}}{2} J_{\frac{n}{2}}\left(|\underline{x}|\partial_{x_0} \right) \right) f_{0}(x_0),
\end{equation}
where $J_{\nu}$ is the Bessel function of the first kind of order $\nu$. Formula \ref{Exx} is known as the generalized CK-extension of $f_0$, and it is denoted by $GCK[f_0](x_{0}, \underline{x})$. This extension operator defines an isomorphism between the right modules
$$ GCK: \mathcal{A}(\Omega_1) \otimes \mathbb{R}_n \to \mathcal{AM}(\Omega),$$
whose inverse is given by the restriction operator to the real line i.e. $GCK[f_0](x_0,0)=f_{0}(x_0)$.
\end{thm}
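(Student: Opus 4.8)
The plan is to reduce the monogenicity requirement on the ansatz $f(x_0,\underline{x}) = \sum_{j=0}^\infty \underline{x}^j f_j(x_0)$ to a recurrence among the coefficient functions $f_j$, solve that recurrence explicitly in terms of $f_0$, and then recognize the resulting power series as the closed Bessel-function expression \eqref{Exx}. First I would record the action of the Dirac operator $\partial_{\underline{x}} = \sum_{i=1}^n e_i\partial_{x_i}$ on the powers $\underline{x}^j$. Using $\underline{x}^2 = -|\underline{x}|^2$ and a short induction, one gets the two formulas
$$\partial_{\underline{x}}\,\underline{x}^{2k} = -2k\,\underline{x}^{2k-1}, \qquad \partial_{\underline{x}}\,\underline{x}^{2k+1} = -(n+2k)\,\underline{x}^{2k},$$
which split the effect of $\partial_{\underline{x}}$ according to the parity of the exponent.

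Applying $\partial = \partial_{x_0} + \partial_{\underline{x}}$ termwise to the ansatz and collecting the coefficient of each power $\underline{x}^m$ (noting that $f_j$ is constant in $\underline{x}$, and $e_i$ acts from the left while $f_j$ sits on the right, so $\partial_{\underline{x}}(\underline{x}^j f_j) = (\partial_{\underline{x}}\underline{x}^j)f_j$ with no ordering issue), the equation $\partial f = 0$ becomes the system
$$f_{m+1} = \frac{f_m'}{n+m}\ \ (m\text{ even}), \qquad f_{m+1} = \frac{f_m'}{m+1}\ \ (m\text{ odd}).$$
This recurrence determines every $f_j$ uniquely from $f_0$, which already yields uniqueness of the sequence $\{f_j\}$. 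Solving it and grouping even and odd indices gives
$$f_{2k} = \frac{\Gamma(n/2)}{4^k\,k!\,\Gamma(n/2+k)}\,f_0^{(2k)}, \qquad f_{2k+1} = \frac{\Gamma(n/2)}{2^{2k+1}\,k!\,\Gamma(n/2+k+1)}\,f_0^{(2k+1)}.$$
Substituting these into $\sum_j\underline{x}^j f_j$, using $\underline{x}^{2k} = (-1)^k|\underline{x}|^{2k}$ and $\underline{x}^{2k+1} = (-1)^k|\underline{x}|^{2k}\underline{x}$, and comparing with the Taylor expansion $J_\nu(z) = \sum_k \frac{(-1)^k}{k!\,\Gamma(k+\nu+1)}(z/2)^{2k+\nu}$ under the formal substitution $z = |\underline{x}|\partial_{x_0}$, I would match the even part to the $J_{n/2-1}$ term and the odd part to the $J_{n/2}$ term, thereby identifying the sum with the right-hand side of \eqref{Exx}.

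The hard part will be to justify that this formal series genuinely converges, in a full axially symmetric $(n+1)$-dimensional neighbourhood of $\Omega_1$, to a $C^1$ (in fact real-analytic) function to which the termwise differentiation above is legitimate. Here I would use that $f_0$ is real-analytic on $\Omega_1$, so on a compact subset its derivatives satisfy Cauchy-type estimates $|f_0^{(j)}(x_0)| \le C\,j!\,R^{-j}$; combined with the extra decay furnished by the factor $\Gamma(n/2+k)$ in the denominators of $f_{2k}$ and $f_{2k+1}$, this forces uniform absolute convergence of $\sum_j\underline{x}^j f_j$ on compact subsets of a suitable neighbourhood, legitimising the termwise application of $\partial$ and confirming that the sum is monogenic of the required axial form $A(x_0,|\underline{x}|) + \underline{\omega}B(x_0,|\underline{x}|)$.

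Finally, for the isomorphism statement I would argue as follows. That the restriction to the real line inverts $GCK$ on the left is immediate, since setting $\underline{x}=0$ kills every term with $j\ge 1$ and leaves $f_0$. For the converse, any axially monogenic $g$ is of the form $A + \underline{\omega}B$ with $A$ even and $B$ odd in $|\underline{x}|$; expanding $A$ and $B/|\underline{x}|$ as power series in $|\underline{x}|^2 = -\underline{x}^2$ shows that $g$ admits an expansion $\sum_j\underline{x}^j g_j(x_0)$ of the same type as the ansatz. Imposing $\partial g = 0$ then forces the $g_j$ to obey the very recurrence above, so $g$ is determined by its restriction $g_0 = g|_{\underline{x}=0}$ and hence equals $GCK[g_0]$. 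This gives $GCK\circ(\,\cdot\,|_{\underline{x}=0}) = \mathrm{id}$ on $\mathcal{AM}(\Omega)$ and, together with the left inverse, shows that $GCK$ is an isomorphism of right $\mathbb{R}_n$-modules.
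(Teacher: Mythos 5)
Your proof is correct and follows the standard argument for the generalized CK-extension (termwise application of $\partial$, the parity-split recurrence $f_{m+1}=f_m'/(n+m)$ for $m$ even and $f_{m+1}=f_m'/(m+1)$ for $m$ odd, resummation into the Bessel kernel, Cauchy estimates for convergence, and the even--odd expansion of an axial monogenic function for surjectivity); the coefficient formulas and the matching with $J_{n/2-1}$ and $J_{n/2}$ all check out. Note that the paper itself does not prove this statement --- it is quoted as a preliminary from \cite{green} --- and your reconstruction is essentially the proof given in that reference.
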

For a more general and deep treatment of this result we refer the reader to \cite{SS, SS1, G1}.

A relation between holomorphic intrinsic functions and axially monogenic functions is given by the following theorem, see \cite{CSS2, S}.
\begin{thm}[Fueter-Sce mapping theorem] Let $D \subseteq \mathbb{C}$ be an intrinsic complex domain and $n$ be a fixed odd number. Let $f_0(z)=f_0(u+iv)= \alpha(u,v)+i \beta(u,v)$ be an intrinsic holomorphic function defined in $D$. Then
$$ \Delta_{\mathbb{R}^{n+1}}^{\frac{n-1}{2}}[f(x_0+\underline{x})]=\Delta_{\mathbb{R}^{n+1}}^{\frac{n-1}{2}}[\alpha(x_0,r)+ \underline{\omega}\beta(x_0,r)],$$
is axially monogenic in axially symmetric slice domain $\Omega=D \times \mathbb{S}^{n-1}=\{(x_0, \underline{x}) \in \mathbb{R}^{n+1} \, : \, (x_0,|\underline{x}| \in D)\}.$
\end{thm}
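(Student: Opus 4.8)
The plan is to reduce the whole statement to a single scalar polyharmonicity identity, and then prove that identity by a Bessel‑operator intertwining. Write the induced slice monogenic function in its axial form $f(x)=\alpha(x_0,r)+\underline{\omega}\,\beta(x_0,r)$, where $r=|\underline{x}|$ and $\underline{\omega}=\underline{x}/|\underline{x}|$. By the representation formula (Theorem \ref{moons}) the pair $(\alpha,\beta)$ satisfies the Cauchy–Riemann system $\partial_{x_0}\alpha=\partial_r\beta$, $\partial_{x_0}\beta=-\partial_r\alpha$, together with the even–odd conditions \eqref{evenodd}; in particular $\beta$ is odd in $r$, so $\beta/r$ is a smooth (even) function of $r$. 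First I would record the factorization $\Delta_{\mathbb{R}^{n+1}}=\partial\,\overline{\partial}$ with $\overline{\partial}=\partial_{x_0}-\partial_{\underline{x}}$, so that the constant–coefficient operators $\partial$ and $\Delta_{\mathbb{R}^{n+1}}$ commute. Since $\frac{n-1}{2}\in\mathbb{N}_0$ (as $n$ is odd), this gives
\[
\partial\,\Delta_{\mathbb{R}^{n+1}}^{\frac{n-1}{2}} f=\Delta_{\mathbb{R}^{n+1}}^{\frac{n-1}{2}}(\partial f),
\]
so monogenicity of the output will follow once $\partial f$ is understood. That the output is axial is automatic: a direct differentiation (using that $\underline{\omega}$ is a degree‑one spherical harmonic) shows $\Delta_{\mathbb{R}^{n+1}}$ sends $A+\underline{\omega}B$ to $\widehat A+\underline{\omega}\widehat B$, hence every power of the Laplacian preserves the axial form.

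The second step is to compute $\partial f$. Using $\partial_{\underline{x}}\,g(r)=\underline{\omega}\,\partial_r g$ for radial $g$, the identities $\partial_{\underline{x}}\underline{x}=-n$ and $\sum_i e_i\,\underline{x}\,x_i=-r^2$, I would split $(\partial_{x_0}+\partial_{\underline{x}})(\alpha+\underline{\omega}\beta)$ into its scalar and $\underline{\omega}$ parts. The $\underline{\omega}$ part equals $\partial_{x_0}\beta+\partial_r\alpha$, which vanishes by the second Cauchy–Riemann equation, while the scalar part collapses, via the first Cauchy–Riemann equation, to $\partial_{x_0}\alpha-\partial_r\beta-(n-1)\beta/r=-(n-1)\beta/r$. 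Thus $\partial f=-(n-1)\,\beta/r$ is a scalar radial function, and by the commutation above the entire theorem reduces to the scalar identity $\Delta_{\mathbb{R}^{n+1}}^{(n-1)/2}(\beta/r)=0$.

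The heart of the argument, which I expect to be the main obstacle, is exactly this identity. On functions radial in $\underline{x}$, $\Delta_{\mathbb{R}^{n+1}}$ acts as $T_n:=\partial_{x_0}^2+\partial_r^2+\frac{n-1}{r}\partial_r$. Two ingredients drive the proof. First, a one‑line computation using only the planar harmonicity $\partial_{x_0}^2\beta+\partial_r^2\beta=0$ yields the first‑application formula $T_m(\beta/r)=(m-3)\,\Lambda(\beta/r)$, where $\Lambda:=\frac1r\partial_r$; since $\beta$ is held fixed this holds for every dimension parameter $m$. Second, I would verify the Bessel intertwining $\Lambda\,T_{m-2}=T_m\,\Lambda$, equivalently $\Lambda\big(\partial_r^2+\tfrac{m-3}{r}\partial_r\big)=\big(\partial_r^2+\tfrac{m-1}{r}\partial_r\big)\Lambda$. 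Pushing $T_n$ through powers of $\Lambda$ by $T_n\Lambda^{j}=\Lambda^{j}T_{n-2j}$ and applying the first‑application formula at each stage gives, by induction on $j$,
\[
T_n^{\,j}\!\Big(\frac{\beta}{r}\Big)=\Big[\prod_{i=1}^{j}\big(n-1-2i\big)\Big]\,\Lambda^{j}\!\Big(\frac{\beta}{r}\Big).
\]
Taking $j=\frac{n-1}{2}$, the factor with $i=\frac{n-1}{2}$ equals $n-1-(n-1)=0$, so the product vanishes and $\Delta_{\mathbb{R}^{n+1}}^{(n-1)/2}(\beta/r)=0$.

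Combining the three steps, $\Delta_{\mathbb{R}^{n+1}}^{(n-1)/2}f$ is monogenic, and being of axial form it is axially monogenic on $\Omega=D\times\mathbb{S}^{n-1}$. The delicate points to check carefully are the cancellation of the $\underline{\omega}$‑component of $\partial f$ and the exactness of the intertwining constants: an error in the coefficient $\frac{m-1}{r}$ (or in the spherical‑harmonic eigenvalue $-(n-1)$ for $\underline{\omega}$) would destroy the telescoping that forces the crucial zero factor. I would therefore present the verifications of $T_m(\beta/r)=(m-3)\Lambda(\beta/r)$ and of $\Lambda T_{m-2}=T_m\Lambda$ explicitly, as these are the load‑bearing computations.
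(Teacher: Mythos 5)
The paper does not actually prove this statement: it is recalled as a classical result (Fueter--Sce) with citations to the literature, so there is no internal proof to compare against. Your argument is a correct, self-contained proof along the lines of Sce's original reduction, and all the load-bearing computations check out. Specifically: the factorization $\Delta_{\mathbb{R}^{n+1}}=\partial\overline{\partial}$ and the commutation $\partial\Delta^{\frac{n-1}{2}}=\Delta^{\frac{n-1}{2}}\partial$ are valid; the computation $\partial_{\underline{x}}\bigl(\underline{\omega}\beta\bigr)=-(n-1)\tfrac{\beta}{r}-\partial_r\beta+\underline{\omega}\,\partial_r\alpha$ combined with the Cauchy--Riemann system gives $\partial f=-(n-1)\tfrac{\beta}{r}$ (the oddness of $\beta$ in $r$ making $\beta/r$ real-analytic across $r=0$); the first-application formula $T_m(\beta/r)=(m-3)\Lambda(\beta/r)$ follows from the planar harmonicity of $\beta$ exactly as you describe; and the intertwining $T_m\Lambda=\Lambda T_{m-2}$ holds, so the telescoping product $\prod_{i=1}^{(n-1)/2}(n-1-2i)$ indeed contains the zero factor at $i=\tfrac{n-1}{2}$, giving $\Delta^{\frac{n-1}{2}}_{\mathbb{R}^{n+1}}(\beta/r)=0$ and hence monogenicity. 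The preservation of axial form under the Laplacian (via the degree-one spherical harmonic $\underline{x}$) is also correct and preserves the even--odd conditions, so the output is axially monogenic on $\Omega$. The only presentational caveat is that you should state explicitly that the first-application formula is used at every stage with the shifted parameter $m=n-2j$, and that each $\Lambda^{j}(\beta/r)$ remains an even real-analytic function of $r$, so no singularity appears on the real axis; with those two sentences added the proof is complete.
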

The fact that every intrinsic holomorphic function $f$ is the unique holomorphic extension of a real analytic function $f_0$ on the real line, we can rewrite the Fueter-Sce theorem in the following way.
\begin{thm}[Fueter-Sce mapping theorem]
Let $\Omega_1 \subset \mathbb{R}$ be a real domain and $f_0 \in \mathcal{A}_0(\Omega_1) \otimes \mathbb{R}_n$. Then $ \Delta_{\mathbb{R}^{n+1}}^{\frac{n-1}{2}} \circ S[f_0](x_0, \underline{x})$ is an axial monogenic function on a $(n+1)$-dimensional axially symmetric slice neighbourhood $\Omega \subset \mathbb{R}^{n+1}$ of $\Omega_1$. 
\end{thm}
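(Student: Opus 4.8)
The plan is to reduce this statement to the preceding (first) form of the Fueter-Sce mapping theorem by unwinding the definition of the slice extension map $S$. Recall from the commutative diagram that $S = S_{\mathbb{C}} \circ C$, where $C = \exp(iv\partial_u)$ produces the unique holomorphic extension of a real-analytic function and $S_{\mathbb{C}}$ carries an intrinsic holomorphic function to a slice monogenic one. First I would apply $C$ to the datum $f_0 \in \mathcal{A}_0(\Omega_1) \otimes \mathbb{R}_n$: since $f_0$ is real-analytic and $\mathbb{R}_n$-valued, it admits a unique holomorphic extension $\tilde{f}_0 := C[f_0]$ on an intrinsic complex domain $D \subset \mathbb{C}$ with $D \cap \mathbb{R} = \Omega_1$, and this extension lies in $\mathcal{H}(D) \otimes \mathbb{R}_n$. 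Writing $\tilde{f}_0(u+iv) = \alpha(u,v) + i\beta(u,v)$, membership in $\mathcal{H}(D)$ guarantees that $\alpha$ and $\beta$ satisfy the even-odd conditions \eqref{evenodd}.

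Next I would apply $S_{\mathbb{C}}$ to $\tilde{f}_0$. By the definition of $S_{\mathbb{C}}$, which replaces the complex variable $z = u+iv$ by the paravector $x = x_0 + \underline{x}$ and the imaginary unit $i$ by $I = \underline{x}/|\underline{x}| = \underline{\omega}$, one obtains
\[
S[f_0](x_0,\underline{x}) = S_{\mathbb{C}}[\tilde{f}_0](x) = \alpha(x_0, |\underline{x}|) + \underline{\omega}\,\beta(x_0, |\underline{x}|).
\]
This is precisely the slice monogenic function to which the first form of the Fueter-Sce theorem applies. Invoking that theorem for the intrinsic holomorphic function $\tilde{f}_0 = \alpha + i\beta$ on $D$ yields directly that $\Delta_{\mathbb{R}^{n+1}}^{\frac{n-1}{2}}[\alpha(x_0,r) + \underline{\omega}\,\beta(x_0,r)]$ is axially monogenic on $\Omega = D \times \mathbb{S}^{n-1}$, where $r = |\underline{x}|$. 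Since this expression equals $\Delta_{\mathbb{R}^{n+1}}^{\frac{n-1}{2}} \circ S[f_0]$, the axial monogenicity follows.

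The remaining work is bookkeeping about the domain, and this is the only point requiring care. Because $D$ is intrinsic it is invariant under complex conjugation and contains the real segment $\Omega_1$, so $\Omega = D \times \mathbb{S}^{n-1} = \{u + vI : (u,v) \in D, \ I \in \mathbb{S}^{n-1}\}$ is by construction an axially symmetric slice domain and an $(n+1)$-dimensional neighbourhood of $\Omega_1$. I would also confirm that the series $S[f_0](x) = \sum_{j=0}^\infty \frac{\underline{x}^j}{j!} f_0^{(j)}(x_0)$ converges on $\Omega$, which follows from the radius of convergence of $\tilde{f}_0$ together with the estimate $|xy| \leq 2^{\frac{n}{2}}|x||y|$ recalled in Section 2. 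I do not expect a genuine obstacle here: the analytic content is entirely carried by the first Fueter-Sce theorem, and the task is simply to identify $S[f_0]$ with $\alpha + \underline{\omega}\beta$ via the factorization $S = S_{\mathbb{C}} \circ C$ and to match the abstract real-analytic input with the intrinsic holomorphic function of the earlier formulation.
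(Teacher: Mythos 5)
Your proposal is correct and follows exactly the route the paper intends: the paper offers no separate proof of this statement, merely the remark that it is a rewriting of the first form of the Fueter--Sce theorem via the unique holomorphic extension of $f_0$, and your argument fills in precisely that reduction through the factorization $S = S_{\mathbb{C}} \circ C$. The only point worth making explicit is that $f_0$ is Clifford-valued while the first form is stated for $\mathbb{C}$-valued intrinsic functions, so one should decompose $f_0 = \sum_A f_A e_A$ and apply the scalar statement componentwise using right $\mathbb{R}_n$-linearity; this is routine and does not affect the validity of your argument.
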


\begin{rem}
In \cite{Q} T.Qian proved that the Fueter-Sce theorem holds in the case of a Clifford algebra over an even number $n$ of imaginary units. In order to deal with the fractional laplacian he uses the techniques of Fourier multipliers in the space of distributions. We refer to \cite{CSS1,E1, E3, KQS, PSS1, PQS, S1} for several generalizations.
\end{rem}

As it is expected, both results the Fueter and the generalized CK extension theorems are related, see \cite[Thm. 4.2]{DDG}.
\begin{thm}
\label{FG}
Let $ D \subset \mathbb{C}$ be an intrinsic complex domain. Consider a holomorphic function $f: D \to \mathbb{C}$ such that its restriction to the real line is real valued. Then for $n$ odd we have
\begin{equation}
\label{CGK5}
\Delta^{\frac{n-1}{2}}_{\mathbb{R}^{n+1}}f(x_0+ \underline{x})=  \gamma_n GCK[f^{(n-1)}(x_{0})],
\end{equation}
where $\gamma_n:= \frac{(-1)^{\frac{n-1}{2}}2^{n-1}}{(n-1)!} \left[\Gamma\left(\frac{n+1}{2}\right)\right]^2.$ Setting $\Omega_1:=D \cap \mathbb{R}$ we have the following commutative diagram
\begin{equation}\label{Diag2}
\begin{tikzcd}[row sep = 3em, column sep = 6em]
\mathcal{A}(\Omega_1) \otimes \mathbb{R}_n \arrow[r, "S", rightarrow] \arrow[d, "\gamma_n \partial_{x_0}^{n-1}", labels=left] & \mathcal{SM}(\Omega) \arrow[d, "\Delta^{\frac{n-1}{2}}_{\mathbb{R}^{n+1}}"] \\
\mathcal{A}(\Omega_1) \otimes \mathbb{R}_n \arrow[r, "\textup{GCK}", rightarrow]                  &\mathcal{AM}(\Omega)
\end{tikzcd}
\end{equation}
\end{thm}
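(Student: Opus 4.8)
The plan is to use that the generalized CK-extension of Theorem~\ref{GCK} is an isomorphism whose inverse is restriction to the real line; hence two axially monogenic functions agree as soon as their restrictions to $\mathbb{R}$ coincide. By the Fueter-Sce mapping theorem the left-hand side $\Delta^{\frac{n-1}{2}}_{\mathbb{R}^{n+1}} f(x_0+\underline{x})$ is axially monogenic, while $\gamma_n\, GCK[f^{(n-1)}(x_0)]$ is axially monogenic with real-line restriction $\gamma_n f_0^{(n-1)}(x_0)$, where $f_0:=f|_{\mathbb{R}}$ and $f^{(n-1)}|_{\mathbb{R}}=f_0^{(n-1)}$. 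Recalling that the paravector evaluation $f(x_0+\underline{x})$ is precisely the slice extension $S[f_0]=\exp(\underline{x}\partial_{x_0})f_0=\sum_{j\ge 0}\frac{\underline{x}^j}{j!}f_0^{(j)}(x_0)$ of $f_0$, the whole theorem thus reduces to the scalar identity
$$\left.\Delta^{\frac{n-1}{2}}_{\mathbb{R}^{n+1}} S[f_0]\right|_{\underline{x}=0}=\gamma_n\, f_0^{(n-1)}(x_0).$$

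To compute this restriction I would split the slice series into even and odd powers of $\underline{x}$ via $\underline{x}^{2k}=(-1)^k r^{2k}$ and $\underline{x}^{2k+1}=(-1)^k r^{2k}\underline{x}$, where $r=|\underline{x}|$. Writing $N=\frac{n-1}{2}$ and using $\Delta_{\mathbb{R}^{n+1}}=\partial_{x_0}^2+\Delta_{\underline{x}}$ together with the commuting binomial expansion $\Delta^{N}=\sum_{l=0}^{N}\binom{N}{l}\partial_{x_0}^{2(N-l)}\Delta_{\underline{x}}^{l}$, the evaluation at $\underline{x}=0$ becomes elementary. The odd part contributes nothing, since $\Delta_{\underline{x}}^{l}$ sends a homogeneous polynomial of odd degree to one of odd degree, which vanishes at the origin; and in the even part $\Delta_{\underline{x}}^{l}(r^{2k})$ vanishes at $0$ unless $k=l$. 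Only the diagonal terms $k=l$ survive, so I would invoke the classical radial identity $\Delta_{\underline{x}}^{l}(r^{2l})=4^{l}\,l!\,\Gamma(\tfrac n2+l)/\Gamma(\tfrac n2)$ (the inner variable $\underline{x}$ living in $\mathbb{R}^n$) together with $\partial_{x_0}^{2(N-l)}f_0^{(2l)}=f_0^{(2N)}=f_0^{(n-1)}$.

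Collecting constants, the restriction equals $f_0^{(n-1)}(x_0)$ times the finite sum $\sum_{l=0}^{N}\binom{N}{l}\frac{(-1)^l 4^l l!}{(2l)!}\frac{\Gamma(\frac n2+l)}{\Gamma(\frac n2)}$. The main obstacle is to show that this sum equals $\gamma_n$. I expect to handle it by rewriting the factors as Pochhammer symbols, using $\binom{N}{l}(-1)^l=(-N)_l/l!$, $\Gamma(\tfrac n2+l)/\Gamma(\tfrac n2)=(N+\tfrac12)_l$ and $4^l l!/(2l)!=1/(\tfrac12)_l$, which identifies the sum with the terminating Gauss value ${}_2F_1(-N,N+\tfrac12;\tfrac12;1)$. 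The Chu-Vandermonde summation ${}_2F_1(-N,b;c;1)=(c-b)_N/(c)_N$ with $b=N+\tfrac12$, $c=\tfrac12$ then gives $(-N)_N/(\tfrac12)_N=(-1)^N 4^N (N!)^2/(2N)!$, which is exactly $\gamma_n$ after substituting $n=2N+1$, $2^{n-1}=4^N$ and $\Gamma(\tfrac{n+1}{2})=N!$.

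Once the scalar identity is established, the isomorphism property of $GCK$ upgrades it to the functional identity \eqref{CGK5}. The commutative diagram \eqref{Diag2} is then immediate: its two composites $\Delta^{\frac{n-1}{2}}_{\mathbb{R}^{n+1}}\circ S$ and $GCK\circ(\gamma_n\partial_{x_0}^{n-1})$ agree on every $f_0$ by linearity of $GCK$ and $\gamma_n\partial_{x_0}^{n-1}f_0=\gamma_n f_0^{(n-1)}$. The only analytic point to check is the term-by-term application of $\Delta^{\frac{n-1}{2}}_{\mathbb{R}^{n+1}}$ to the slice series, which is justified by the real-analyticity of $f_0$ inside the domain of convergence guaranteed by Theorem~\ref{GCK}.
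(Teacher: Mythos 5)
Your proof is correct. Note that the paper does not actually prove Theorem \ref{FG}; it imports it from the reference \cite[Thm.\ 4.2]{DDG}, and your argument is essentially the natural one used there: reduce, via the isomorphism property of the generalized CK-extension and the Fueter--Sce theorem, to the scalar identity $\Delta^{\frac{n-1}{2}}_{\mathbb{R}^{n+1}}S[f_0]\big|_{\underline{x}=0}=\gamma_n f_0^{(n-1)}(x_0)$, then evaluate by the binomial expansion of $(\partial_{x_0}^2+\Delta_{\underline{x}})^{N}$, the radial identity $\Delta_{\underline{x}}^{l}(r^{2l})=4^l\,l!\,(n/2)_l$, and a terminating Gauss/Chu--Vandermonde summation, which indeed yields $\gamma_n$ (this checks out, e.g., giving $-2$ for $n=3$ and $8/3$ for $n=5$).
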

\begin{rem}
The previous theorem holds also when $n$ is even, see \cite[Thm. 4.7]{DDG}.
\end{rem}

\section{The action of Fueter-Sce mapping on Clifford paravector monomials and elementary functions}

The goal of this section is to expand in series some axially elementary  monogenic functions in terms of Clifford-Appell polynomials. In order to achieve this aim we apply the Fueter-Sce map to slice monogenic functions expanded in series. Therefore, it is crucial to figure out how the Fueter-Sce map acts on the paravector monomial $x^k$, with $ k \geq 0$. We will also show that, in the cases of the exponential, trigonometric and hyperbolic functions, the corresponding actions of the Fueter-Sce map and of the generalized CK-extension coincide.

\subsection{Fueter-Sce map and Clifford-Appell polynomials}
In this subsection we will show that the action of the Fueter-Sce map to the to the paravector monomial $x^k$ leads to the Clifford-Appell polynomials, introduced in \cite{CFM, CMF}. First of all we recall the definition of this kind of polynomials. The family of $ \{P_k\}_{k \in \mathbb{N}}$ of Clifford-Appell polynomials is defined as
\begin{equation}
\label{appe}
 P_{k}^{n}(x):= \sum_{s=0}^{k}T_s^k(n)x^{k-s}\overline{x}^{s}, \qquad x \in \mathbb{R}^{n+1},
\end{equation}
where the coefficients $T_s^k(n)$ are given by
\begin{equation}
\label{f0}
T_s^k(n):= \binom{k}{s}\frac{ \left( \frac{n+1}{2}\right)_{k-s} \left( \frac{n-1}{2}\right)_s}{(n)_k}, \qquad n \geq 1 ,
\end{equation}
and $(.)_k$ stands for the Pochammer symbol, i.e. $(a)_s= \frac{\Gamma(a+s)}{\Gamma(a)}$ or $(a)_s= \begin{cases}
a(a+1)(a+2)...(a+s-1) \quad \hbox{for} \quad s>0\\
1  \qquad \qquad \qquad \qquad \qquad \qquad \quad \hbox{for} \quad s=0.
\end{cases}$
It is proved in \cite{CFM} that
\begin{equation}
\label{somma}
\sum_{s=0}^k T_s^k(n)=1.
\end{equation}
The polynomials $P_{k}^{n}(x)$ satisfy the Appell property
\begin{equation}
\label{app}
\frac{1}{2} \overline{\partial}P_{k}^{n}(x_0+ \underline{x})=k P_{k-1}^{n}(x_0+ \underline{x}),
\end{equation}
where $\overline{\partial}=\partial_{x_0}- \sum_{\ell=1}^{n} e_{\ell} \partial_{x_{\ell}}$ is the so-called hypercomplex derivative.
Moreover, the Clifford-Appell polynomials are axially monogenic, see \cite{CFM}. 
Then, by the following fact 
$$ P_k^n(x_0+\underline{0})=x_0^k \sum_{s=0}^k T_k^s(n)=x_0^k,$$
it is clear that
\begin{equation}
\label{GCK2}
P_k^n(x)=GCK[x_0^k].
\end{equation}
Combining this result with Theorem \ref{FG} we obtain the following result.
\begin{thm}
\label{lapla1}
Let $n\geq 1$ be a fixed odd number and $k\geq 0$. Then, for any $x=x_0+ \underline{x}\in \mathbb{R}^{n+1}$ it holds that
\begin{equation}
\label{fund}
\Delta^{\frac{n-1}{2}}_{\mathbb{R}^{n+1}}(x^{n+k-1})= \gamma_n \frac{(n+k-1)!}{k!} P_k^n(x),
\end{equation}
where $\gamma_n:=\frac{(-1)^{\frac{n-1}{2}} 2^{n-1}}{(n-1)!} \left[\Gamma \left(\frac{n+1}{2}\right)\right]^2$.
\end{thm}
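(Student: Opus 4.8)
The plan is to read the identity \eqref{fund} directly off the commutative diagram \eqref{Diag2} of Theorem \ref{FG}, applied to the single real-analytic seed $f_0(x_0)=x_0^{n+k-1}$, combined with the identification \eqref{GCK2} of the Clifford-Appell polynomials as generalized CK-extensions of the real monomials. No genuinely new computation is needed beyond one elementary differentiation; the content is entirely in assembling the cited results in the right order.

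First I would record that the paravector power $x^{n+k-1}$ is exactly the slice monogenic extension of $x_0^{n+k-1}$. Indeed, $z\mapsto z^{n+k-1}$ is holomorphic intrinsic on $\mathbb{C}$ with real-valued restriction $x_0\mapsto x_0^{n+k-1}$ to the real line, so the hypotheses of Theorem \ref{FG} are met. Moreover, since the scalar $x_0$ commutes with the vector part $\underline{x}$, the binomial theorem gives
$$ S[x_0^{n+k-1}](x)=\exp(\underline{x}\partial_{x_0})\,x_0^{n+k-1}=\sum_{j=0}^{n+k-1}\binom{n+k-1}{j}\underline{x}^j x_0^{n+k-1-j}=(x_0+\underline{x})^{n+k-1}=x^{n+k-1}. $$
Thus $x^{n+k-1}$ is precisely the top-right corner $S[f_0]$ of \eqref{Diag2} for $f_0(x_0)=x_0^{n+k-1}$.

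Next I would invoke the relation \eqref{CGK5}, that is, the commutativity of \eqref{Diag2}, to obtain
$$ \Delta^{\frac{n-1}{2}}_{\mathbb{R}^{n+1}}\!\left(x^{n+k-1}\right)=\gamma_n\,GCK\!\left[\frac{d^{\,n-1}}{dx_0^{\,n-1}}\,x_0^{n+k-1}\right]. $$
The only elementary step is the $(n-1)$-th derivative of the monomial,
$$ \frac{d^{\,n-1}}{dx_0^{\,n-1}}\,x_0^{n+k-1}=\frac{(n+k-1)!}{k!}\,x_0^{k}, $$
which is valid precisely because $n+k-1\ge n-1$ for every $k\ge 0$. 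Finally, using the $\mathbb{R}_n$-linearity of the generalized CK-extension together with \eqref{GCK2}, which reads $GCK[x_0^k]=P_k^n(x)$, I would pull out the scalar factor and conclude
$$ \Delta^{\frac{n-1}{2}}_{\mathbb{R}^{n+1}}\!\left(x^{n+k-1}\right)=\gamma_n\,\frac{(n+k-1)!}{k!}\,GCK[x_0^k]=\gamma_n\,\frac{(n+k-1)!}{k!}\,P_k^n(x), $$
which is exactly \eqref{fund}.

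The argument has no serious obstacle; the two points that deserve care are verifying that the hypotheses of Theorem \ref{FG} genuinely hold for $z^{n+k-1}$ (the real-valued restriction to the line) and that the abstract seed $x_0^{n+k-1}$ really lifts to the concrete paravector power $x^{n+k-1}$ under $S$, which is where the commutativity of $x_0$ with $\underline{x}$ is used. As a sanity check I would verify the edge case $n=1$, where $\Delta^0$ is the identity, $\gamma_1=1$ and $P_k^1(x)=x^k$, so that \eqref{fund} collapses to the tautology $x^k=x^k$.
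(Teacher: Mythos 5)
Your proposal is correct and follows essentially the same route as the paper's own proof: both arguments combine the derivative identity $\partial_{x_0}^{n-1}x_0^{n+k-1}=\frac{(n+k-1)!}{k!}x_0^k$ with the identification $P_k^n=GCK[x_0^k]$ from \eqref{GCK2} and the commutation relation \eqref{CGK5} of Theorem \ref{FG}. The only difference is presentational (you run the chain forwards from $\Delta^{\frac{n-1}{2}}x^{n+k-1}$, while the paper starts from $P_k^n$ and works backwards), and your explicit check that $S[x_0^{n+k-1}]=x^{n+k-1}$ and the $n=1$ sanity check are welcome but not substantive additions.
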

\begin{proof}
By the identity $ \partial_{x_0}^{n-1}[x_0^{n-1+k}]= \frac{(n+k-1)!}{k!} x_{0}^{k}$ and the fact that $P_k^n(x)$ is the generalized CK-extension of the monomial $x_0^k$ we get
\begin{eqnarray*}
P_k^n(x)&=& GCK[x_0^k]\\
&=& \frac{k!}{(n+k-1)!} GCK \circ \partial_{x_0}^{n-1}[x_{0}^{n-1+k}].
\end{eqnarray*}
Finally, using Theorem \ref{FG} we obtain
$$\Delta^{\frac{n-1}{2}}_{\mathbb{R}^{n+1}}(x^{n+k-1})= (-1)^{\frac{n-1}{2}}2^{n-1} \left[\Gamma\left(\frac{n+1}{2}\right)\right]^2 \frac{(n+k-1)!}{(n-1)!k!}P_k^n(x).$$
\end{proof}
\begin{rem}
\label{quatapp}
The above result extends to arbitrary dimension those obtained for the quaternionic setting in \cite{DKS}. Indeed, if we consider $n=3$ in \eqref{fund} we get
\begin{eqnarray*}
\Delta_{\mathbb{R}^4} (x^{k+2})&=& -\frac{2(k+2)!}{k!} P^3_k(x)\\
&=&-2 (k+2)(k+1)P^3_k(x),
\end{eqnarray*}
which is exactly the identity obtained in \cite[Rem. 3.9]{DKS}.
\end{rem}

\begin{cor}
\label{lapal2}
Let $x \in \mathbb{R}^{n+1}$ and $n\geq3$ be a fixed odd number. Then
$$ \Delta^{\frac{n-1}{2}}_{\mathbb{R}^{n+1}}(x^{j})= \begin{cases}
\gamma_n  \frac{j!}{(j-n+1)!}P^n_{j+1-n}(x) & \mbox{if } \quad j> n-1,\\
\\
\displaystyle \gamma_n (n-1)! & \mbox{if } \quad j=n-1,\\
\\
\displaystyle 0 & \mbox{if } \quad j< n-1.
\end{cases}
$$
\end{cor}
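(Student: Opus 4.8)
The plan is to derive all three cases from the single relation furnished by the commutative diagram \eqref{Diag2} of Theorem \ref{FG}, applied to the real-analytic seed $f_0(x_0)=x_0^j$. The crucial preliminary observation is that the slice monogenic extension of $x_0^j$ is precisely the paravector monomial, i.e. $S[x_0^j]=x^j$; this holds because $x_0$ and $\underline{x}$ commute, so $S[x_0^j]=\exp(\underline{x}\partial_{x_0})x_0^j=\sum_{i=0}^j\binom{j}{i}\underline{x}^i x_0^{j-i}=(x_0+\underline{x})^j$. Travelling around the diagram then yields the master formula $\Delta^{\frac{n-1}{2}}_{\mathbb{R}^{n+1}}(x^j)=\gamma_n\,GCK[\partial_{x_0}^{n-1}(x_0^j)]$, and the three regimes of the corollary correspond exactly to whether the $(n-1)$-th derivative of $x_0^j$ is a genuine power, a nonzero constant, or zero.

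For $j>n-1$ I would simply specialise Theorem \ref{lapla1}: setting $k=j-n+1\geq 1$ makes $n+k-1=j$, and \eqref{fund} immediately gives $\gamma_n \frac{j!}{(j-n+1)!}P^n_{j+1-n}(x)$. For $j=n-1$ the master formula gives $\partial_{x_0}^{n-1}(x_0^{n-1})=(n-1)!$, a constant, so that $\Delta^{\frac{n-1}{2}}_{\mathbb{R}^{n+1}}(x^{n-1})=\gamma_n (n-1)!\,GCK[1]$; since $GCK[1]=P_0^n(x)=1$ by \eqref{GCK2} together with the normalisation \eqref{somma}, this collapses to $\gamma_n(n-1)!$. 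Finally, for $j<n-1$ the polynomial $x_0^j$ has degree strictly below $n-1$, so $\partial_{x_0}^{n-1}(x_0^j)=0$ and hence $\Delta^{\frac{n-1}{2}}_{\mathbb{R}^{n+1}}(x^j)=\gamma_n\,GCK[0]=0$.

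There is no serious obstacle here: the corollary is essentially a case analysis of the elementary identity $\partial_{x_0}^{n-1}(x_0^j)$ transported through the isomorphism $GCK$. The only points requiring care are the two bookkeeping facts that make the diagram usable, namely $S[x_0^j]=x^j$ and $GCK[1]=1$ (equivalently $P_0^n\equiv 1$), both of which are immediate from the definitions. In particular, the linearity of $GCK$ and the fact that it inverts the restriction to the real line guarantee that it sends the constant $c$ to the constant $c$ and the zero function to zero, which is exactly what the two degenerate cases demand.
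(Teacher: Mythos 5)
Your proposal is correct and follows essentially the same route as the paper: the case $j>n-1$ is obtained by specialising Theorem \ref{lapla1} with $k=j-n+1$, and the two degenerate cases reduce to elementary facts about $\partial_{x_0}^{n-1}(x_0^j)$. The only (immaterial) difference is that the paper handles $j=n-1$ by substituting into the first case and dismisses $j<n-1$ by a direct degree count on the monomial, whereas you route both through the identity $\Delta^{\frac{n-1}{2}}_{\mathbb{R}^{n+1}}(x^j)=\gamma_n\,GCK[\partial_{x_0}^{n-1}(x_0^j)]$ together with $GCK[c]=c$.
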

\begin{proof}
For the case $j> n-1$, it is enough to set $j:= n+k-1$ in formula \eqref{fund}.
\\For the second case we substitute $j=n-1$ in the first case and we obtain
$$
\Delta^{\frac{n-1}{2}}_{\mathbb{R}^{n+1}} (x^{n-1})=4^{\frac{n-1}{2}}(-1)^\frac{n-1}{2} \left(\frac{n-1}{2}\right) \left(\frac{n-1}{2}\right)! \left(\frac{n-3}{2}\right)!=\gamma_n (n-1)!.
$$
\\ Finally, the case $j <n-1$ is trivial because the number of derivatives to perform is more than the degree of the monomial.
\end{proof}

\subsection{Elementary functions}
In Clifford analysis, the building blocks of monogenic functions are the so called Fueter polynomials $\mathcal{P}_{\underline{k}}(x)$ defined by
$$  \mathcal{P}_{\underline{k}}(x)= \frac{1}{k!} \sum_{\sigma \in S_n} z_{j \sigma(1)}...z_{j\sigma(k)},$$
where $S_n$ is the set of permutations of $ \{1,...,n\}$ and $ z_j:= x_{j}-x_{0}e_j$, with $j=1,...,n,$ are the Fueter variables. 
Fueter polynomials play the same role in Clifford analysis as the monomials $ x_0^{\alpha_0}x_{1}^{\alpha_1}x_{2}^{\alpha_2}...x_{n}^{\alpha_n}$ in
analysis of several real variables.
Now, we state the counterpart of Taylor series in a neighbourhood of the origin for monogenic functions, see \cite{BDS}. 
\begin{thm}
\label{monock}
	Let $f$ be a monogenic function in a neighbourhood of $0 \in \mathbb{R}^{m+1}$. Then, the function $f$ admits the series expansion
	$$ f(x)= \sum_{k=0}^\infty \sum_{| \underline{k}|=k} \mathcal{P}_{\underline{k}}(x) a_{\underline{k}}, \qquad \underline{k}=(0,k_1,...,k_n),$$
	where $ \mathcal{P}_{\underline{k}}(x)=CK[x_1^{k_1}...x_{n}^{k_n}]$ and $a_{\underline{k}} \in \mathbb{R}_n$.
\end{thm}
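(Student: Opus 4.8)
The plan is to reduce the statement to the classical Cauchy--Kovalevskaya characterization of monogenic functions by their restriction to the hyperplane $x_0=0$. First I would record that every monogenic function is real-analytic, being a null solution of the elliptic, constant-coefficient operator $\partial_{x_0}+\partial_{\underline{x}}$. Consequently the restriction $f_0(\underline{x}):=f(0,x_1,\dots,x_n)$ is a real-analytic $\mathbb{R}_n$-valued function on a neighbourhood of $\underline{0}$ in $\mathbb{R}^n$, and hence admits a normally convergent multivariable Taylor expansion
\[
f_0(\underline{x})=\sum_{k=0}^\infty\sum_{|\underline{k}|=k} x_1^{k_1}\cdots x_n^{k_n}\,a_{\underline{k}},\qquad a_{\underline{k}}\in\mathbb{R}_n,
\]
convergent absolutely and uniformly on a polydisc about the origin, with the $a_{\underline{k}}$ uniquely determined by $f_0$ through Cauchy-type estimates.

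Second, I would invoke the fact (recalled before Definition \ref{mono1}) that the CK-extension inverts the restriction operator, so $f$ is the unique monogenic extension of $f_0$, i.e.\ $f=CK[f_0]$. Since the CK-series $CK[g](x)=\sum_{j\ge 0}\frac{(-1)^j}{j!}x_0^j\,\partial_{\underline{x}}^j\,g(\underline{x})$ is $\mathbb{R}_n$-linear (acting on the right) and the polynomials $x_1^{k_1}\cdots x_n^{k_n}$ are annihilated by high enough powers of $\partial_{\underline{x}}$, each $CK[x_1^{k_1}\cdots x_n^{k_n}]$ is a well-defined homogeneous polynomial of degree $|\underline{k}|$. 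Using the identification $\mathcal{P}_{\underline{k}}(x)=CK[x_1^{k_1}\cdots x_n^{k_n}]$, applying $CK$ term by term gives
\[
f(x)=CK\Big[\sum_{k=0}^\infty\sum_{|\underline{k}|=k} x_1^{k_1}\cdots x_n^{k_n}\,a_{\underline{k}}\Big]
=\sum_{k=0}^\infty\sum_{|\underline{k}|=k}\mathcal{P}_{\underline{k}}(x)\,a_{\underline{k}},
\]
which is the claimed expansion.

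The hard part will be rigorously justifying the interchange of the CK-extension operator with the infinite double sum. This is not automatic, since $CK$ is defined by an infinite series in $x_0$ and a single application involves all $\partial_{\underline{x}}$-derivatives of $f_0$; one must guarantee that the resulting double series converges normally on a genuine $(n+1)$-dimensional neighbourhood of $0$ and still equals $CK[f_0]$. The standard route is quantitative: combine the Cauchy estimates $|a_{\underline{k}}|\le C\,R^{-|\underline{k}|}$ coming from real-analyticity of $f$ with the homogeneity of $\mathcal{P}_{\underline{k}}$ (degree $|\underline{k}|$) and a uniform bound $|\mathcal{P}_{\underline{k}}(x)|\le C'\,|x|^{|\underline{k}|}$ on compact sets, so that on a ball $B(0,\rho)$ with $\rho$ small the tail of $\sum_{\underline{k}}\mathcal{P}_{\underline{k}}(x)\,a_{\underline{k}}$ is dominated by a convergent geometric-type majorant. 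Normal convergence then legitimizes the termwise action of $CK$ and, by uniqueness of the monogenic extension, forces the sum to coincide with $f$.

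If the identity $\mathcal{P}_{\underline{k}}=CK[x_1^{k_1}\cdots x_n^{k_n}]$ is not taken as already established, I would insert a short preliminary step verifying it: applying the CK-series to the monomial $x_1^{k_1}\cdots x_n^{k_n}$ and reorganizing the resulting finite sum of products of the variables $x_0,x_1,\dots,x_n$ recovers the symmetrized product of the Fueter variables $z_j=x_j-x_0 e_j$ defining $\mathcal{P}_{\underline{k}}$. This is a purely algebraic, finite computation and presents no convergence difficulty.
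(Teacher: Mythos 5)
The paper does not actually prove Theorem \ref{monock}: it is quoted as a classical result with a citation to \cite{BDS}, so there is no internal proof to compare against. Your argument is nevertheless a correct and standard route to this statement. Restricting to $x_0=0$, Taylor-expanding the real-analytic trace $f_0$, and applying the CK-extension termwise, with the identification $\mathcal{P}_{\underline{k}}=CK[x_1^{k_1}\cdots x_n^{k_n}]$ (which follows at once from uniqueness of the monogenic extension, since the symmetrized product of the Fueter variables is monogenic and restricts to $x_1^{k_1}\cdots x_n^{k_n}$ on $\{x_0=0\}$), does yield the expansion, and you correctly isolate the only delicate point, namely the normal convergence needed to commute $CK$ with the double sum; the Cauchy estimates $|a_{\underline{k}}|\leq C R^{-|\underline{k}|}$ together with $|\mathcal{P}_{\underline{k}}(x)|\leq C'|x|^{|\underline{k}|}$ and the polynomial count $\binom{k+n-1}{n-1}$ of multi-indices of length $k$ close that gap on a possibly smaller ball. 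For the record, the proof in \cite{BDS} proceeds differently: it expands the Cauchy kernel $E(y-x)$ of the monogenic Cauchy integral formula into the bilinear series $\sum_{\underline{k}}\mathcal{P}_{\underline{k}}(x)W_{\underline{k}}(y)$ of inner and outer spherical monogenics, which delivers locally uniform convergence of the Taylor series directly from the kernel estimates rather than from a termwise application of $CK$. Your approach is more elementary in that it avoids the Cauchy integral formula, at the price of having to supply the majorant argument by hand; either way the statement is established.
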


We observe that there is a simpler way to describe monogenic functions defined on axially symmetric domains. This description is achieved by axially monogenic functions, see \cite{Somm1}.

By restricting our analysis to axially monogenic functions, we can substitute the more complicated approach of Fueter polynomials, by the simpler and more accessible approach of Clifford Appell polynomials. The latter are the monogenic counterparts of the powers $x_0^k$ in one-variable real analysis, see \eqref{GCK2}. When restricting our analysis from monogenic to axial functions, the previous Taylor expansion in Theorem \ref{monock} reduces to the following simpler form.
\begin{thm}
\label{newtaylor}
	Let $f$ be an axially monogenic function in a neighbourhood of the origin. Then we can write the function $f$ as
	$$ f(x)= \sum_{k=0}^\infty \frac{f^{(k)}(0)}{k!} P_{k}^{n}(x), \qquad x=x_0+ \underline{x} \in \mathbb{R}^{n+1},$$
	where $P_{k}^n(x)=GCK[x_0^k]$.
\end{thm}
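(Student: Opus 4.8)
The plan is to exploit the generalized CK-extension isomorphism of Theorem \ref{GCK}, which identifies an axially monogenic function with its real-analytic restriction to the real line. First I would set $\Omega_1 := \Omega \cap \mathbb{R}$ and let $f_0 := f|_{\Omega_1}$ be the restriction of $f$ to the real axis. Since $f$ is axially monogenic in a neighbourhood of the origin, Theorem \ref{GCK} guarantees that $f_0 \in \mathcal{A}(\Omega_1) \otimes \mathbb{R}_n$ is real-analytic near $0$ and that $f = GCK[f_0]$, the restriction to the real line being the inverse of $GCK$. Being real-analytic at the origin, $f_0$ admits the convergent Taylor expansion $f_0(x_0) = \sum_{k=0}^\infty \frac{f_0^{(k)}(0)}{k!} x_0^k$ on some interval $(-r,r)$.

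The core of the argument is then to push this expansion through $GCK$. Using linearity together with the identity \eqref{GCK2}, namely $GCK[x_0^k] = P_k^n(x)$, termwise application yields
$$ f = GCK[f_0] = \sum_{k=0}^\infty \frac{f_0^{(k)}(0)}{k!} \, GCK[x_0^k] = \sum_{k=0}^\infty \frac{f_0^{(k)}(0)}{k!} P_k^n(x),$$
which is the desired expansion once $f_0^{(k)}(0)$ is identified with $f^{(k)}(0)$. For that identification I would read $f^{(k)}$ as the iterated hypercomplex derivative $(\tfrac12\overline{\partial})^k f$: the Appell property \eqref{app} gives $\tfrac12\overline{\partial} P_k^n = k P_{k-1}^n = GCK[\partial_{x_0} x_0^k]$, so $\tfrac12\overline{\partial}$ corresponds to $\partial_{x_0}$ under $GCK$. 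Iterating and restricting to the real line yields $(\tfrac12\overline{\partial})^k f(0) = f_0^{(k)}(0)$, making the two notations agree.

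The main obstacle is justifying the termwise application of $GCK$, i.e.\ the interchange of $GCK$ with the summation. The cleanest route is a direct growth estimate for the building blocks: since $P_k^n$ is homogeneous of degree $k$, and by \eqref{somma}--\eqref{f0} the coefficients satisfy $\sum_{s=0}^k T_s^k(n) = 1$ with $T_s^k(n) \geq 0$ for $n \geq 1$, the inequality $|xy| \leq 2^{\frac{n}{2}}|x||y|$ together with $|\overline{x}| = |x|$ gives $|P_k^n(x)| \leq 2^{\frac{n}{2}(k-1)} |x|^k$. Hence the series $\sum_k \frac{f_0^{(k)}(0)}{k!} P_k^n(x)$ is dominated by $\sum_k \frac{|f_0^{(k)}(0)|}{k!} 2^{\frac{n}{2}(k-1)} |x|^k$, which converges uniformly on compact subsets of the ball $|x| < r\, 2^{-n/2}$ because $f_0$ has radius of convergence at least $r$. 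As a uniform limit of monogenic functions is again monogenic and the axial symmetry passes to the limit, the right-hand side defines an axially monogenic function.

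Finally I would close by uniqueness. Restricting the right-hand side to the real line and using $P_k^n(x_0) = x_0^k$ recovers exactly $f_0$, so both $f$ and the series share the same restriction; by the injectivity of the restriction operator on $\mathcal{AM}(\Omega)$ established in Theorem \ref{GCK}, they must coincide. This uniqueness step in fact sidesteps any abstract continuity statement for $GCK$, reducing the whole proof to the convergence estimate above, equation \eqref{GCK2}, and the derivative identification.
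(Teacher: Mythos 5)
Your proposal is correct and follows essentially the same route as the paper: identify $f$ with $GCK[f|_{\mathbb{R}}]$ via Theorem \ref{GCK}, Taylor-expand the restriction at the origin, and apply $GCK[x_0^k]=P_k^n(x)$ termwise; the paper states this in two lines, while you additionally justify the term-by-term application and the meaning of $f^{(k)}(0)$. One small remark: since multiplication by a paravector is norm-multiplicative, the paper's estimate \eqref{esti} gives $|P_k^n(x)|\leq |x|^k$ without the factor $2^{\frac{n}{2}(k-1)}$, so your series actually converges on the full ball $|x|<r$ rather than only on $|x|<r\,2^{-n/2}$.
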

\begin{proof}
	By Theorem \ref{GCK} and the fact that the function $f$ is axially monogenic we have that
	$$ f(x_{0}+ \underline{x})=GCK[f(x_{0})].$$
	By expanding the function $f(x_0)$ in Taylor series and by \eqref{GCK2} we get the statement.  
\end{proof}

In this subsection our aim is to study some axially monogenic elementary functions.
In particular, we shall construct monogenic versions of the exponential,  trigonometric and hyperbolic functions.     
In each of these cases, we are free to use both the Fueter-Sce map and the generalized CK-extension, because in this case the two operators are identical (up to a constant). Indeed, by Theorem \ref{FG} the Fueter-Sce theorem and the generalized CK-extension coincide if
\begin{equation}
\label{eqgck}
GCK[f^{(n-1)}(x_0)]= \frac{1}{\gamma_n}GCK[f(x_0)] .
\end{equation}
Now, since the generalized CK-extension is an isomorphism we get the following condition
$$f^{(n-1)}(x_0)=\frac{1}{\gamma_n}f(x_0).$$ 
This equality is fulfilled by our set of elementary functions i.e., the exponential, the trigonometric and the hyperbolic functions. Below, we list the previous functions in the slice monogenic setting
\begin{defn}
\label{slicemono}
Let $x \in \mathbb{R}^{n+1}$. We define the slice monogenic elementary functions as
$$ e^x:=\sum_{k=0}^{\infty}\frac{x^k}{k!},$$
$$ \sinh(x):=\sum_{k=0}^{\infty}\frac{x^{2k+1}}{(2k+1)!}, $$
$$\cosh(x):=\sum_{k=0}^{\infty}\frac{x^{2k}}{(2k)!}, $$
$$ \sin(x):=\sum_{k=0}^{\infty}\frac{(-1)^k}{(2k+1)!}x^{2k+1},$$
$$ \cos(x):=\sum_{k=0}^{\infty}\frac{(-1)^k}{(2k)!}x^{2k}.$$
\end{defn}
The Fueter-Sce map and the generalized CK-extension coincide, up to a constant, when they are applied to the previous functions. Let $x \in \mathbb{R}^{n+1}$ then we have
\begin{equation}\label{R1}
	\Delta^{\frac{n-1}{2}}_{\mathbb{R}^{n+1}}(e^x)= \gamma_n GCK[e^{x_0}],\end{equation}
$$ \Delta^{\frac{n-1}{2}}_{\mathbb{R}^{n+1}}(\sinh(x))= \gamma_n GCK[\sinh(x_0)],$$
\begin{equation}\label{R3}
	\Delta^{\frac{n-1}{2}}_{\mathbb{R}^{n+1}}(\cosh(x))= \gamma_n GCK[\cosh(x_0)],
\end{equation}
$$ \Delta^{\frac{n-1}{2}}_{\mathbb{R}^{n+1}}(\cos(x))= \gamma_n (-1)^{\frac{n-1}{2}} GCK[\cos(x_0)],$$
$$ \Delta^{\frac{n-1}{2}}_{\mathbb{R}^{n+1}}(\sin(x))= \gamma_n (-1)^{\frac{n-1}{2}} GCK[\sin(x_0)],$$

In literature some elementary monogenic functions in the quaternionic setting are  studied in \cite[Sub. 11.2.3]{G}. The methodology was to extend a complex valued functions to $ \mathbb{R}^{4}$ by means of the slice operator, (see \eqref{slice1}) followed by Laplace operator in four real variables. However, this approach did not use the expansion in series for the slice monogenic functions and hence they did not get a full description of the elementary functions in terms of spherical monogenic polynomials.

Now, we define the elementary monogenic functions by means of the Fueter-Sce map
\begin{defn} Let $n$ be an odd number and $x \in \mathbb{R}^{n+1}$, then we define the monogenic elementary functions as
$$ EXP(x):= \Delta_{\mathbb{R}^{n+1}}^{\frac{n-1}{2}}(e^x),$$
$$ SINH(x):= \Delta_{\mathbb{R}^{n+1}}^{\frac{n-1}{2}}(\sinh(x)),$$
$$ COSH(x):=\Delta_{\mathbb{R}^{n+1}}^{\frac{n-1}{2}}(\cosh(x)),$$
$$ SIN(x):= \Delta_{\mathbb{R}^{n+1}}^{\frac{n-1}{2}}(\sin(x)),$$
$$ COS(x):=\Delta_{\mathbb{R}^{n+1}}^{\frac{n-1}{2}}(\cos(x)).$$
\end{defn}

Now, we write the elementary monogenic functions in terms of Clifford Appell polynomials. The following result is a particular case of Theorem \ref{newtaylor}.
\begin{thm}
Let $n$ be an odd number. Then, for every $x\in\mathbb{R}^{n+1}$, we have
$$ \Delta^{\frac{n-1}{2}}_{\mathbb{R}^{n+1}}(e^x)=\gamma_n\sum_{k=0}^{\infty}\frac{P_k^n(x)}{k!},
$$
$$ \Delta^{\frac{n-1}{2}}_{\mathbb{R}^{n+1}}(\sinh(x))=\gamma_n\sum_{k=1}^{\infty}\frac{P^{n}_{2k+1}(x),}{(2k+1)!},$$
$$ \Delta^{\frac{n-1}{2}}_{\mathbb{R}^{n+1}}(\cosh(x))=\gamma_n \sum_{k=0}^{\infty}\frac{P^{n}_{2k}(x)}{(2k)!},$$
$$
\Delta^{\frac{n-1}{2}}_{\mathbb{R}^{n+1}}(\cos(x))=\gamma_n\sum_{k=0}^{\infty}\frac{(-1)^k}{(2k)!}P^{n}_{2k}(x),
$$
$$\Delta^{\frac{n-1}{2}}_{\mathbb{R}^{n+1}}(\sin(x))=\gamma_n\sum_{k=0}^{\infty}\frac{(-1)^{k}}{(2k+1)!}P^n_{2k+1}(x).
$$
\end{thm}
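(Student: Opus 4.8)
The plan is to treat all five identities uniformly by combining the linearity of the Fueter-Sce map with its explicit action on paravector monomials, and to phrase the computation through the commutative diagram of Theorem \ref{FG} so as to avoid delicate term-by-term estimates. The starting observation is that each slice monogenic function in Definition \ref{slicemono} is the slice extension $S[f_0]$ of the corresponding real elementary function $f_0 \in \{e^{x_0},\sinh x_0,\cosh x_0,\sin x_0,\cos x_0\}$: indeed both $e^x=\sum_k x^k/k!$ and $S[e^{x_0}]$ are slice monogenic and restrict to $e^{x_0}$ on the real line, so they coincide by the uniqueness of the slice monogenic extension, and likewise for the remaining four. Consequently the output of the Fueter-Sce map is axially monogenic, and Theorem \ref{FG} applies verbatim.

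First I would compute the restriction to the real line of each $\Delta^{\frac{n-1}{2}}_{\mathbb{R}^{n+1}}(\cdot)$. By \eqref{CGK5} we have $\Delta^{\frac{n-1}{2}}_{\mathbb{R}^{n+1}} S[f_0]=\gamma_n\,GCK[f_0^{(n-1)}]$, and since $GCK$ is inverted by restriction to $\mathbb{R}$ (Theorem \ref{GCK}), the real-line restriction of $\Delta^{\frac{n-1}{2}}_{\mathbb{R}^{n+1}}(e^x)$ is simply $\gamma_n f_0^{(n-1)}(x_0)=\gamma_n e^{x_0}$, because $n-1$ derivatives fix the exponential. The resulting function is axially monogenic, so Theorem \ref{newtaylor} expresses it as $\sum_k \frac{g^{(k)}(0)}{k!}P_k^n(x)$ with $g(x_0)=\gamma_n e^{x_0}$; reading off $g^{(k)}(0)=\gamma_n$ gives the exponential identity $\Delta^{\frac{n-1}{2}}_{\mathbb{R}^{n+1}}(e^x)=\gamma_n\sum_{k\geq 0}\frac{P_k^n(x)}{k!}$.

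The hyperbolic and trigonometric cases follow the same template, the only new ingredient being the parity of the $(n-1)$-st derivative. Since $n$ is odd, $n-1$ is even, so $\sinh^{(n-1)}=\sinh$ and $\cosh^{(n-1)}=\cosh$, while $\cos^{(n-1)}=(-1)^{\frac{n-1}{2}}\cos$ and $\sin^{(n-1)}=(-1)^{\frac{n-1}{2}}\sin$. Feeding these real restrictions into Theorem \ref{newtaylor} and keeping only the nonzero (odd, resp. even) Taylor coefficients reproduces the four series, with the sign $(-1)^{\frac{n-1}{2}}$ appearing precisely for $\cos$ and $\sin$, in agreement with the displayed identity $\Delta^{\frac{n-1}{2}}_{\mathbb{R}^{n+1}}(\cos x)=\gamma_n(-1)^{\frac{n-1}{2}}GCK[\cos x_0]$ and its $\sin$ analogue. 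As a cross-check, one may instead differentiate directly: applying $\Delta^{\frac{n-1}{2}}_{\mathbb{R}^{n+1}}$ term by term and using Corollary \ref{lapal2} kills every monomial of degree $<n-1$, so that after substituting $j=n+k-1$ in \eqref{fund} and reindexing the surviving odd or even powers one recovers the same series; here the shift $j\mapsto j-n+1$ together with the parity of $n-1$ is what manufactures the factor $(-1)^{\frac{n-1}{2}}$ in the trigonometric lines.

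The main point requiring care is the interchange of the operator $\Delta^{\frac{n-1}{2}}_{\mathbb{R}^{n+1}}$ with the infinite series. In the diagram route this is essentially free: the only interchange needed is $GCK[\sum_k c_k x_0^k]=\sum_k c_k\,GCK[x_0^k]=\sum_k c_k P_k^n(x)$, which follows from $GCK$ being an $\mathbb{R}_n$-linear isomorphism together with \eqref{GCK2} and the local uniform convergence of the generalized CK series on an axially symmetric neighbourhood of the origin, where all five functions are entire. In the direct route one must instead justify the termwise application of the differential operator $\Delta^{\frac{n-1}{2}}_{\mathbb{R}^{n+1}}$, which is legitimate because the power series of the slice elementary functions converge locally uniformly together with all their derivatives on compact sets; this is the only genuine analytic subtlety, the rest being the bookkeeping of indices and signs described above.
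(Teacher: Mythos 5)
Your argument is correct, and it is essentially the route the paper itself advertises just before the theorem (``a particular case of Theorem \ref{newtaylor}''): pass through the commutative diagram of Theorem \ref{FG}, identify the restriction to the real line as $\gamma_n f_0^{(n-1)}$, and read off the Clifford--Appell coefficients via Theorem \ref{newtaylor} and \eqref{GCK2}. The paper's \emph{written} proof instead takes what you call the cross-check route: it applies $\Delta^{\frac{n-1}{2}}_{\mathbb{R}^{n+1}}$ termwise to the power series, invokes Theorem \ref{lapla1}, and reindexes via $2k+1=n+\ell-1$; since you include both, the two texts cover the same ground with the emphasis inverted. Your version buys a cleaner treatment of convergence (only the linearity and continuity of the $GCK$ isomorphism is needed, rather than termwise differentiation of the series), at the cost of having to justify that $e^x=S[e^{x_0}]$ etc., which you do correctly by uniqueness of the slice extension.

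One substantive point deserves emphasis rather than the parenthetical treatment you give it: your derivation (and, for that matter, the paper's own termwise computation, if one carries it out for $\cos$ and $\sin$) produces the identities $\Delta^{\frac{n-1}{2}}_{\mathbb{R}^{n+1}}(\cos x)=\gamma_n(-1)^{\frac{n-1}{2}}\sum_{k\geq 0}\tfrac{(-1)^k}{(2k)!}P^n_{2k}(x)$ and similarly for $\sin$, with the extra factor $(-1)^{\frac{n-1}{2}}$ coming from $\cos^{(n-1)}=(-1)^{\frac{n-1}{2}}\cos$. This agrees with the displays preceding the theorem but \emph{not} with the theorem as stated, where that factor is absent from the last two lines. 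You should say explicitly that the stated $\cos$ and $\sin$ identities hold only up to this sign (equivalently, that the theorem statement carries a typo), rather than asserting that your computation ``reproduces the four series'' while simultaneously exhibiting the extra sign; as written, your paragraph quietly proves something different from the displayed claim.
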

\begin{proof}
We apply the Fueter-Sce map on the slice monogenic exponential $e^x$. For $n$ an odd fixed number and by Theorem \ref{lapla1} we have
$$\Delta^{\frac{n-1}{2}}_{\mathbb{R}^{n+1}}(e^x)=\sum_{k=n-1}^{\infty}\frac{\Delta^{\frac{n-1}{2}}_{\mathbb{R}^{n+1}}(x^k)}{k!}=\gamma_n\sum_{p=0}^{\infty}\frac{P^n_p(x)}{p!}.$$
Now, we apply the Fueter-Sce map $\Delta^{\frac{n-1}{2}}_{\mathbb{R}^{n+1}}$ to the slice monogenic hyperbolic sinus
$$ \Delta^{\frac{n-1}{2}}_{\mathbb{R}^{n+1}}(\sinh(x))=\sum_{k=0}^{\infty}\frac{\Delta^{\frac{n-1}{2}}_{\mathbb{R}^{n+1}}(x^{2k+1})}{(2k+1)!}=\sum_{k=\lceil \frac{n-2}{2} \rceil}^{\infty}\frac{\Delta^{\frac{n-1}{2}}_{\mathbb{R}^{n+1}}(x^{2k+1})}{(2k+1)!},$$
where $ \lceil. \rceil$ is the ceiling function.
\\By rearranging the index of the sum as $2k+1=n+ \ell-1$, since $n$ is odd we get that $ \ell$ is also odd, and this implies that
$$ \Delta^{\frac{n-1}{2}}_{\mathbb{R}^{n+1}}(\sinh(x))=\sum_{\ell=1, \, \ell \, \,  odd}^{\infty}\frac{\Delta^{\frac{n-1}{2}}_{\mathbb{R}^{n+1}}(x^{n+ \ell-1})}{(n+ \ell-1)!}.$$
Finally, by formula \eqref{fund} we have
$$\Delta^{\frac{n-1}{2}}_{\mathbb{R}^{n+1}}(\sinh(x))=\gamma_n \sum_{k=0}^\infty  \frac{ P^n_{2k+1}(x)}{(2k +1) !}.$$
The expansions of the other monogenic functions in power series follow from similar arguments.
\end{proof}


\begin{prop}
\label{deri}
Let $n $ be an odd fixed number. Then for every $x \in \mathbb{R}^{n+1}$ we have
\begin{itemize}
\item $\overline{\partial} EXP(x)=2 EXP(x),$
\item $\overline{\partial} SINH(x)=2COSH(x),$
\item $\overline{\partial} COSH(x)=2SINH(x),$
\item $\overline{\partial} SIN(x)=2COS(x),$
\item $\overline{\partial} COS(x)=-2SIN(x).$
\end{itemize}
\end{prop}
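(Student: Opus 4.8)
The plan is to prove all five identities by a single mechanism: rewrite each elementary monogenic function through its Clifford-Appell series from the preceding theorem, differentiate term by term with $\overline{\partial}$, and apply the Appell property \eqref{app} in the form $\overline{\partial}P_k^n(x)=2k\,P_{k-1}^n(x)$. Before moving $\overline{\partial}$ inside the sums, I would first justify term-by-term differentiation. Since the coefficients $T_s^k(n)$ are nonnegative and sum to $1$ by \eqref{somma}, and since $|x^{k-s}\overline{x}^{s}|=|x|^{k}$ (the elements $x,\overline{x}$ commute and generate a copy of $\mathbb{C}$), one gets the estimate $|P_k^n(x)|\le |x|^k$. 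Together with the reciprocal-factorial decay of the coefficients, this makes each series and its formal $\overline{\partial}$-derivative converge normally on every ball, so the interchange of $\overline{\partial}$ with the infinite sum is legitimate on all of $\mathbb{R}^{n+1}$.

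The exponential serves as the model case. From $EXP(x)=\gamma_n\sum_{k=0}^{\infty}\frac{P_k^n(x)}{k!}$ and \eqref{app} I would compute
\begin{equation*}
\overline{\partial}\,EXP(x)=\gamma_n\sum_{k=1}^{\infty}\frac{2k\,P_{k-1}^n(x)}{k!}=2\gamma_n\sum_{k=1}^{\infty}\frac{P_{k-1}^n(x)}{(k-1)!}=2\gamma_n\sum_{j=0}^{\infty}\frac{P_j^n(x)}{j!}=2\,EXP(x),
\end{equation*}
where the $k=0$ term drops because of the factor $k$ and the final step is the reindexing $j=k-1$. The hyperbolic identities follow by the same bookkeeping applied to the even and odd subseries: for $SINH$ the factor $2k+1$ cancels the denominator, giving $\overline{\partial}\,SINH(x)=2\gamma_n\sum_{k=0}^{\infty}\frac{P_{2k}^n(x)}{(2k)!}=2\,COSH(x)$, while for $COSH$ the $k=0$ term again vanishes (the factor $2k$) and the shift $j=k-1$ sends the even subseries onto the odd one, yielding $\overline{\partial}\,COSH(x)=2\,SINH(x)$.

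The step I would handle most carefully is $COS$, where the reindexing produces the sign. Starting from $COS(x)=\gamma_n\sum_{k=0}^{\infty}\frac{(-1)^k}{(2k)!}P_{2k}^n(x)$, I would write
\begin{equation*}
\overline{\partial}\,COS(x)=\gamma_n\sum_{k=1}^{\infty}\frac{(-1)^k\,2(2k)}{(2k)!}P_{2k-1}^n(x)=2\gamma_n\sum_{k=1}^{\infty}\frac{(-1)^k}{(2k-1)!}P_{2k-1}^n(x),
\end{equation*}
and then the substitution $j=k-1$ turns $(-1)^k$ into $(-1)^{j+1}=-(-1)^j$, so the sum equals $-2\gamma_n\sum_{j=0}^{\infty}\frac{(-1)^j}{(2j+1)!}P_{2j+1}^n(x)=-2\,SIN(x)$. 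The parallel computation for $SIN$ carries no vanishing lowest term and preserves the sign, giving $\overline{\partial}\,SIN(x)=2\,COS(x)$. Apart from the normal-convergence justification in the first paragraph, no step beyond the Appell property and this index arithmetic is required, so I do not anticipate any genuine obstacle; the only place to stay alert is the parity-induced sign in the trigonometric cases.
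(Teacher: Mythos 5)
Your proof is correct, but it takes a different route from the one the paper actually uses. The paper's proof rests on two structural facts: for any monogenic $f$ one has $\overline{\partial}f=2\partial_{x_0}f$, and the generalized CK-extension intertwines with $\partial_{x_0}$, so that for instance $\overline{\partial}\,EXP(x)=2\gamma_n\,\partial_{x_0}GCK[e^{x_0}]=2\gamma_n\,GCK[e^{x_0}]=2\,EXP(x)$; all five identities then reduce to one-variable calculus of $e^{x_0}$, $\sinh$, $\cosh$, $\sin$, $\cos$ on the real line, with no series manipulation at all. You instead differentiate the Clifford--Appell series term by term and invoke the Appell property $\overline{\partial}P_k^n=2k\,P_{k-1}^n$ together with the reindexings; your index arithmetic and the parity-induced sign in the $COS$ case are all correct, and your justification of term-by-term differentiation via $|P_k^n(x)|\le|x|^k$ (which is the paper's estimate \eqref{esti}) is a point the paper leaves implicit. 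It is worth noting that the paper itself records your argument, for the exponential case only, in the remark immediately following the proposition, so your route is the acknowledged alternative rather than a new one. The trade-off: the GCK argument is shorter and conceptually cleaner (it explains \emph{why} the identities must hold — they are transported from the real line by an isomorphism), while your series argument is more self-contained, requires neither the identity $\overline{\partial}f=2\partial_{x_0}f$ nor the GCK representation of the elementary functions, and makes the convergence issues explicit.
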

\begin{proof}
For a generic monogenic function $f$ it is known that $ \overline{\partial}f=2 \partial_{x_0}f$. If we set $f(x):= EXP(x)$, by Remark \ref{R1} we get
\begin{eqnarray*}
\overline{\partial}EXP(x)&=&2 \gamma_n \partial_{x_0} GCK[e^{x_0}]\\
&=&2  \gamma_n GCK[e^{x_0}]\\
&=& 2 EXP(x).
\end{eqnarray*}
The other points follows the same arguments.
\end{proof}

\begin{rem}
It is possible to show the previous result by means of the Appell property of polynomials $P^n_k(x)$ (see \eqref{app}). We show, for instance, the first property
$$\overline{\partial} EXP(x) = 2 \gamma_n\sum_{k=1}^{\infty}\frac{kP^n_{k-1}(x)}{k!}\\
= 2 EXP(x).
$$
\end{rem}

Moreover, we can make the following estimates for the previous  monogenic extensions.
\begin{prop}
Let $n $ be an odd fixed number. Then for every $x \in \mathbb{R}^{n+1}$ we have
\begin{itemize}
\item $|EXP(x)|\leq  |\gamma_n| e^{|x|}, $
\item$|SINH(x)| \leq |\gamma_n|\sinh(|x|),$
\item$|COSH(x)| \leq  |\gamma_n| \cosh(|x|),$
\item$|SIN(x)| \leq |\gamma_n|\sinh(|x|),$
\item $ |COS(x)|\leq |\gamma_n|\cosh(|x|).$
\end{itemize}
\end{prop}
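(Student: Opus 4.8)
The plan is to deduce all five estimates from a single pointwise bound on the Clifford--Appell polynomials, namely
$$|P_k^n(x)| \le |x|^k, \qquad x \in \mathbb{R}^{n+1}, \ k \ge 0,$$
and then to insert this bound termwise into the series expansions of $EXP$, $SINH$, $COSH$, $SIN$ and $COS$ obtained in the previous theorem.

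First I would establish the pointwise bound. Fix $x = x_0 + \underline{x}$. If $\underline{x} = 0$ the claim is immediate since $P_k^n(x_0) = x_0^k$; otherwise set $I := \underline{x}/|\underline{x}| \in \mathbb{S}^{n-1}$, so that $x = x_0 + |\underline{x}| I$ and $\overline{x} = x_0 - |\underline{x}| I$ both lie in the commutative plane $\mathbb{C}_I \cong \mathbb{C}$. On $\mathbb{C}_I$ the Clifford norm coincides with the complex modulus and is therefore multiplicative, whence $|x^{k-s}\overline{x}^s| = |x|^{k-s}|\overline{x}|^s = |x|^k$, using $|\overline{x}| = |x|$. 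Recalling the definition \eqref{appe} and the fact that the coefficients $T_s^k(n)$ are nonnegative (being products of the positive factor $\binom{k}{s}/(n)_k$ with the Pochhammer symbols $\left(\frac{n+1}{2}\right)_{k-s} > 0$ and $\left(\frac{n-1}{2}\right)_s \ge 0$ for $n \ge 1$), the triangle inequality together with the normalization \eqref{somma} gives
$$|P_k^n(x)| \le \sum_{s=0}^k T_s^k(n) \, |x^{k-s}\overline{x}^s| = |x|^k \sum_{s=0}^k T_s^k(n) = |x|^k.$$

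With this bound in hand, each estimate follows by applying the triangle inequality to the corresponding series expansion and factoring out the real constant $\gamma_n$. For instance, since $EXP(x) = \gamma_n \sum_{k=0}^\infty P_k^n(x)/k!$, one gets $|EXP(x)| \le |\gamma_n| \sum_{k=0}^\infty |x|^k/k! = |\gamma_n| e^{|x|}$; the bounds for $COSH$ and $COS$ follow identically from the even series and yield $|\gamma_n|\cosh(|x|)$, while those for $SINH$ and $SIN$ follow from the odd series and yield $|\gamma_n|\sinh(|x|)$. Note that for $SIN$ and $COS$ the alternating signs $(-1)^k$ are discarded by the triangle inequality, which is exactly why their bounds involve $\sinh$ and $\cosh$ rather than the trigonometric functions themselves.

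I expect the only genuinely substantive step to be the multiplicativity of the norm on $x^{k-s}\overline{x}^s$: the general submultiplicativity $|xy| \le 2^{n/2}|x||y|$ recorded in Section 2 is far too lossy here, and one must instead exploit that a single paravector together with its conjugate generates a commutative field isomorphic to $\mathbb{C}$, on which the Clifford norm is exactly multiplicative. Everything else — nonnegativity and normalization of the $T_s^k(n)$, termwise estimation justified by absolute convergence, and recognizing the resulting scalar series as $e^{|x|}$, $\sinh(|x|)$ and $\cosh(|x|)$ — is routine.
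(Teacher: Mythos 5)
Your proof is correct and follows essentially the same route as the paper: the pointwise bound $|P_k^n(x)|\le |x|^k$ derived from the normalization \eqref{somma} (this is exactly the paper's inequality \eqref{esti}), followed by termwise estimation of each series expansion. The paper asserts $|x^{k-s}\overline{x}^s|\le |x|^{k-s}|x|^s$ without comment, so your explicit justification via the multiplicativity of the norm on the commutative plane $\mathbb{C}_I$ (rather than the lossy general bound $|xy|\le 2^{n/2}|x||y|$), together with the remark on the nonnegativity of the coefficients $T_s^k(n)$, supplies details the paper takes for granted.
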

\begin{proof}
We start by proving the first estimate. Formula \eqref{somma} implies that
\begin{equation}
\label{esti}
|P^n_k(x)| \leq  \sum_{s=0}^k T_s^k(n) |x|^{k-s}|x|^s=|x|^k.
\end{equation}
Then we obtain
$$|EXP(x)| \leq |\gamma_n| \sum_{k=0}^{\infty} \frac{|P_{k}^n(x)|}{k!} \leq |\gamma_n| \sum_{k=0}^{\infty} \frac{|x|^k}{k!} =|\gamma_n| e^{|x|}. $$
The other estimates follow by similar computations.
\end{proof}
In this table we summarize all the properties that we have showed for the elementary monogenic functions.
\begin{center}
\begin{tabular}{|c|c|c|c|}
\hline
\rule[-2mm]{-5mm}{-6cm}
& Power series & Hypercomplex derivatives & Estimates \\
\hline
EXP(x)&  $\displaystyle\gamma_n \sum_{k=0}^{\infty}\frac{P_k^n(x)}{k!}$& $2EXP(x)$&$|EXP(x)|\leq |\gamma_n|  e^{|x|}$\\
\hline
SINH(x)&$ \displaystyle\gamma_n\sum_{k=0}^{\infty}\frac{P^{n}_{2k+1}(x)}{(2k+1)!}$&$2COSH(x)$&$|SINH(x)|\leq|\gamma_n|  \sinh(|x|)$\\
\hline
COSH(x)&$\displaystyle\gamma_n\sum_{k=0}^{\infty}\frac{P^{n}_{2k}(x)}{(2k)!}$&$2SINH(x)$& $|COSH(x)| \leq |\gamma_n|  \cosh(|x|)$\\
\hline
SIN(x)& $\displaystyle\gamma_n\sum_{k=0}^{\infty}\frac{(-1)^k}{(2k+1)!}P^{n}_{2k+1}(x)$&$2COS(x)$&$|SIN(x)|\leq |\gamma_n|\sinh(|x|)$\\
\hline
COS(x)&$\displaystyle\gamma_n\sum_{k=0}^{\infty}\frac{(-1)^k}{(2k)!}P^{n}_{2k}(x)$&$-2SIN(x)$&$|COS(x)|\leq |\gamma_n| \cosh(|x|)$\\
\hline
\end{tabular}
\end{center}

Now, we prove formulas that put in relation the monogenic exponential and the hyperbolic functions.  
\begin{prop}
Let $n$ be an odd number. For $x \in \mathbb{R}^{n+1}$ we have
$$ COSH(x)= \frac{EXP(x)+EXP(-x)}{2},$$
$$ SINH(x)= \frac{EXP(x)-EXP(-x)}{2}.$$
\end{prop}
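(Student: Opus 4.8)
The plan is to reduce both identities to their slice monogenic counterparts, exploiting the linearity of the Fueter-Sce map $\Delta^{\frac{n-1}{2}}_{\mathbb{R}^{n+1}}$ together with its invariance under the reflection $x \mapsto -x$. First I would record the elementary slice-level identities, which follow directly from Definition \ref{slicemono}: since the slice exponential evaluated at $-x$ is $\sum_{k=0}^\infty \frac{(-1)^k x^k}{k!}$, averaging sum and difference isolates the even and odd powers, giving $\cosh(x)=\frac{e^x+e^{-x}}{2}$ and $\sinh(x)=\frac{e^x-e^{-x}}{2}$ at the slice monogenic level.

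The key point to settle is the behaviour of $EXP(-x)$, which I interpret as the monogenic exponential $EXP=\Delta^{\frac{n-1}{2}}_{\mathbb{R}^{n+1}}(e^x)$ evaluated at the reflected paravector $-x=-x_0-\underline{x}$. Here I would establish the commutation relation
$$\Delta^{\frac{n-1}{2}}_{\mathbb{R}^{n+1}}\big[F(-x)\big]=\big(\Delta^{\frac{n-1}{2}}_{\mathbb{R}^{n+1}}F\big)(-x).$$
This holds because, under $x_i\mapsto -x_i$, the chain rule turns $\partial_{x_i}$ into $-\partial_{x_i}$ and hence leaves each $\partial_{x_i}^2$ unchanged; thus the Laplacian $\Delta_{\mathbb{R}^{n+1}}=\sum_{i=0}^n \partial_{x_i}^2$ commutes with the reflection, and since $n$ is odd the exponent $\frac{n-1}{2}$ is a nonnegative integer, so the same holds for $\Delta^{\frac{n-1}{2}}_{\mathbb{R}^{n+1}}$. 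Applying this with $F=e^x$ yields $EXP(-x)=\Delta^{\frac{n-1}{2}}_{\mathbb{R}^{n+1}}[e^{-x}]$.

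With this in hand the conclusion follows by linearity of the map:
$$\frac{EXP(x)+EXP(-x)}{2}=\Delta^{\frac{n-1}{2}}_{\mathbb{R}^{n+1}}\Big[\frac{e^x+e^{-x}}{2}\Big]=\Delta^{\frac{n-1}{2}}_{\mathbb{R}^{n+1}}[\cosh(x)]=COSH(x),$$
and the identical computation with the difference produces $SINH(x)$.

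Alternatively, and avoiding the reflection argument, I would work purely with the Clifford-Appell expansions already obtained. Since each $P_k^n$ is homogeneous of degree $k$ and $\overline{-x}=-\bar x$, one gets $P_k^n(-x)=(-1)^k P_k^n(x)$; combined with $EXP(x)=\gamma_n\sum_{k\ge 0}\frac{P_k^n(x)}{k!}$, this gives $EXP(-x)=\gamma_n\sum_{k\ge 0}\frac{(-1)^k P_k^n(x)}{k!}$, whose even and odd parts reproduce exactly the series for $COSH(x)$ and $SINH(x)$. The only genuine obstacle is making the meaning of $EXP(-x)$ precise; once the reflection invariance of $\Delta^{\frac{n-1}{2}}_{\mathbb{R}^{n+1}}$ (equivalently, the homogeneity of $P_k^n$) is recorded, the remainder is a one-line linear computation.
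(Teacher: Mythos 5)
Your proposal is correct, but it takes a different route from the paper. The paper's proof runs through the generalized CK-extension: it invokes the identities $COSH(x)=\gamma_n\, GCK[\cosh(x_0)]$ and $EXP(x)=\gamma_n\, GCK[e^{x_0}]$ (formulas \eqref{R1} and \eqref{R3}), splits $\cosh(x_0)=\tfrac{1}{2}(e^{x_0}+e^{-x_0})$ on the real line, and concludes by linearity of $GCK$ — implicitly identifying $\gamma_n\, GCK[e^{-x_0}]$ with $EXP(-x)$. You instead work directly with the Fueter-Sce map, observing that the reflection $x\mapsto -x$ commutes with $\Delta_{\mathbb{R}^{n+1}}$ (hence with its integer power $\tfrac{n-1}{2}$ for $n$ odd), so that $EXP(-x)=\Delta^{\frac{n-1}{2}}_{\mathbb{R}^{n+1}}[e^{-x}]$, after which linearity and the slice-level identity $\cosh(x)=\tfrac{1}{2}(e^x+e^{-x})$ finish the argument; your alternative via the parity $P_k^n(-x)=(-1)^kP_k^n(x)$ is the series route that the paper mentions but declines to carry out. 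What your version buys is an explicit justification of the one genuinely delicate point — the meaning of $EXP(-x)$ and why it equals the Fueter-Sce image of $e^{-x}$ — which the paper's $GCK$ argument leaves tacit (there it would follow from the facts that $f(-x)$ is again axially monogenic when $f$ is, together with uniqueness of the $GCK$ extension). What the paper's version buys is brevity: once the $GCK$ identifications are granted, the whole proof is a two-line computation on the real line.
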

\begin{proof}
It is possible to show this result with the series definitions, but we will use the generalized CK-extension because it is easier. By formula \eqref{R1} and \eqref{R3} we get
\begin{eqnarray*}
COSH(x)&=& \gamma_{n} GCK[\cosh(x_0)]\\
&=& \frac{\gamma_n}{2} \left[GCK[e^{x_0}]+GCK[e^{-x_0}]\right]\\
&=& \frac{EXP(x)+EXP(-x)}{2}.
\end{eqnarray*}
The same arguments follow for the hyperbolic sinus.
\end{proof}
Now, we turn our attention to the following question.
\newline
\newline
\textbf{Problem}
Is it possible to extend the identities which, hold in complex analysis for the trigonometric and hyperbolic functions to Clifford analysis?
\newline
\newline
The key point of the problem is to understand how to multiply monogenic functions.
In this case the pointwise product and the CK-product (denoted by $\odot_{CK}$) do not work. The reason is due to the fact that the pointwise product does not preserve the monogenicity. On the other hand, for a pair of monogenic functions $A(x_0, \underline{x})$ and $B(x_0, \underline{x})$  the CK-product is defined as
$$ A(x_0, \underline{x}) \odot_{CK} B(x_0, \underline{x})=CK[A(0, \underline{x}) \cdot B(0, \underline{x})].$$
As proved in \cite[Prop. 3.7]{ADS2022} (one can easily extend in $ \mathbb{R}^{n+1}$) the CK-product between Clifford-Appell polynomials is given by 
\begin{equation}
\label{Ck}
(P_k^n \odot_{CK} P_s^n)(x)= \frac{c_kc_s}{c_{k+s}}P_{k+s}^n.
\end{equation}
The drawback of this formula is the presence of unsuitable constants $c_k$, depending on the dimension, and the degree $k$, which turn out to be unsuitable for certain computations. 
\\ Therefore, we define a new kind of product between the axially monogenic functions, which makes use of the generalized CK-extension.
\begin{defn}
\label{np}
Let $A(x_0, \underline{x})$ and $B(x_0, \underline{x})$ be axially monogenic functions then
\begin{equation}
\label{np2}
A(x_0, \underline{x}) \odot_{GCK} B(x_0, \underline{x})=GCK[A(x_0,0) \cdot B(x_0,0)].
\end{equation}
\end{defn}
This definition allows the introduction of the following $GCK$ multiplicative inverse.
\begin{defn}
\label{inve}
Let $A(x_0, \underline{x})$ be an axially monogenic function then
$$ [A(x_0, \underline{x})]^{-\odot}=GCK\left[ \frac{1}{A(x_0,0)}\right],$$
whenever $A(x_0,0) \neq 0$.
\end{defn}
In these cases, we restrict the functions to a submanifold of dimension one. We will se that this gives rise to natural properties of the product between Clifford-Appell polynomials, see Lemma \ref{prod} and Proposition \ref{GCK3}.
\\ This new product is fundamental to prove the following relations of the Clifford-Appell trigonometric and hyperbolic functions.
\begin{lem}
\label{idf}
Let $n$ be a fixed odd number. Let $x=x_0+ \underline{x} \in \mathbb{R}^{n+1}$ then
$$ \left(COS(x) \odot_{GCK} COS(x)+SIN(x) \odot_{GCK} SIN(x) \right)=\gamma_n^2,$$
and
$$ \left(COSH(x) \odot_{GCK} COSH(x)-SINH(x) \odot_{GCK} SINH(x)\right)=\gamma_n^2,$$
where $\gamma_n:=\frac{(-1)^{\frac{n-1}{2}}2^{n-1}}{(n-1)!}\left[\Gamma\left(\frac{n+1}{2}\right)\right]^2$.
\end{lem}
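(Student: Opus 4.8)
The plan is to move the entire computation to the real line through the definition of $\odot_{GCK}$, and then invoke the classical Pythagorean identities together with the fact that $GCK$ fixes constants. First I would record the restrictions of the five elementary monogenic functions to $\underline{x}=0$. Since $GCK$ is inverted by the restriction operator (Theorem \ref{GCK}), applying $\big|_{\underline{x}=0}$ to the relations $COS(x)=\gamma_n(-1)^{\frac{n-1}{2}}GCK[\cos(x_0)]$, $SIN(x)=\gamma_n(-1)^{\frac{n-1}{2}}GCK[\sin(x_0)]$, $COSH(x)=\gamma_n GCK[\cosh(x_0)]$ and $SINH(x)=\gamma_n GCK[\sinh(x_0)]$ yields
$$COS(x_0,0)=\gamma_n(-1)^{\frac{n-1}{2}}\cos(x_0), \qquad SIN(x_0,0)=\gamma_n(-1)^{\frac{n-1}{2}}\sin(x_0),$$
and analogously $COSH(x_0,0)=\gamma_n\cosh(x_0)$, $SINH(x_0,0)=\gamma_n\sinh(x_0)$.

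Next I would insert these restrictions into Definition \ref{np}. For the trigonometric identity this gives
$$COS(x)\odot_{GCK}COS(x)=GCK\big[COS(x_0,0)^2\big]=\gamma_n^2\,(-1)^{n-1}\,GCK[\cos^2(x_0)],$$
and the same for $SIN$. The one point deserving attention is the sign factor $(-1)^{\frac{n-1}{2}}$: because $n$ is odd, $n-1$ is even, so $(-1)^{n-1}=1$ and the sign disappears upon squaring. Summing the two expressions and using linearity of $GCK$ then collapses the trigonometric part to $\gamma_n^2\,GCK[\cos^2(x_0)+\sin^2(x_0)]=\gamma_n^2\,GCK[1]$.

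Finally I would observe that $GCK[1]=GCK[x_0^0]=P_0^n(x)=1$ by \eqref{GCK2}, which closes the trigonometric identity. The hyperbolic case is identical, except that $COSH$ and $SINH$ carry no $(-1)^{\frac{n-1}{2}}$ factor, so one obtains directly $\gamma_n^2\,GCK\big[\cosh^2(x_0)-\sinh^2(x_0)\big]=\gamma_n^2\,GCK[1]=\gamma_n^2$. I do not expect a genuine obstacle here: the whole argument rests on transporting the product to the real line, where the classical identities $\cos^2+\sin^2=1$ and $\cosh^2-\sinh^2=1$ apply verbatim. The only bookkeeping subtlety is checking that the accumulated constants reduce to exactly $\gamma_n^2$, which is precisely where the oddness of $n$ is used.
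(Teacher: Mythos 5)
Your argument is correct and follows essentially the same route as the paper's proof: both reduce the $\odot_{GCK}$-products to the real line via Definition \ref{np}, invoke the classical identities $\cos^2+\sin^2=1$ and $\cosh^2-\sinh^2=1$, and use $GCK[1]=1$. The only difference is cosmetic: you make explicit the cancellation of the sign factor $(-1)^{\frac{n-1}{2}}$ upon squaring (using that $n$ is odd), which the paper absorbs silently into the constant $\gamma_n^2$.
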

\begin{proof}
By Definition \ref{np} we have
\begin{eqnarray}
\nonumber
COS(x) \odot_{GCK} COS(x) &=& \gamma_n^2 \left(GCK[\cos(x_0)]\odot_{GCK}GCK[\cos(x_0)]\right) \\
\label{cos1}
&=& \gamma_n^2GCK[\cos^2(x_0)].
\end{eqnarray}
Similarly
\begin{eqnarray}
\nonumber
SIN(x) \odot_{GCK} SIN(x) &=&\gamma_n^2\left(GCK[\sin(x_0)]\odot_{GCK}GCK[\sin(x_0)]\right) \\
\label{cos2}
&=& \gamma_n^2 GCK[\sin^2(x_0)].
\end{eqnarray}
By making the sum of \eqref{cos1} and \eqref{cos2} we get
\begin{eqnarray*}
COS(x) \odot_{GCK} COS(x)+SIN(x) \odot_{GCK} SIN(x)&=& \gamma_n^2GCK[\cos^2(x_0)]+ \gamma_n^2GCK[\sin^2(x_0)]\\
&=& \gamma_n^2GCK[\cos^2(x_0)+\sin^2(x_0)]\\
&=& \gamma_n^2GCK[1]=\gamma_n^2.
\end{eqnarray*}
\end{proof}

\begin{rem}
By similar arguments it is possible to prove other classical trigonometric identities for the Clifford-Appell monogenic sine and cosine. As well as for the hyperbolic sine and cosine.
\end{rem}

\section{The Kernel and Range of Fueter-Sce mapping}
In classical complex analysis a family of reproducing kernel Hilbert spaces including: Hardy, Fock, Hardy-Sobolev and Dirichlet spaces are studied in \cite{ACS2019}. In this section, we shall extend these spaces to the Clifford setting with the help of the Fueter-Sce map. Moreover, we shall study the kernel and the range of this map when acting on such spaces. Before we introduce our extensions, let us recall that in the complex setting, the Hardy, Fock and the other spaces fit into the following general parallelism.

Given a non-decreasing sequence of real numbers $c=\{c_n\}_{n\in \mathbb{N}}$ with $c_0=1$ we consider the following space of functions
$$ \mathcal{H}(\Omega_c)= \left \{f(z)=\sum_{n=0}^\infty z^n f_n, \quad \{f_n\}_{n \geq 0}\subset \mathbb{C} \quad : \sum_{n=0}^\infty  c_n |f_n|^2 < \infty \right\}, \qquad \Omega_c:=\left\lbrace{z\in\mathbb{C}; \quad \sum_{n=0}^\infty \frac{|z|^{2n}}{c_n}<\infty} \right\rbrace.$$
Each element of the space $\mathcal{H}(\Omega_c)$ absolutely converges on $ \Omega_c$. Indeed by the Cauchy-Schwarz inequality we have
$$  \left(\sum_{n=0}^\infty |z|^n |f_n| \right)^2 = \left(\sum_{n=0}^\infty \frac{|z|^n}{\sqrt{c_n}} \sqrt{c_n} |f_n| \right)^2 \leq \left( \sum_{n=0}^\infty \frac{|z|^{2n}}{c_n}\right) \left( \sum_{n=0}^\infty c_n |f_n|^2\right)< \infty.$$
The space $ \mathcal{H}(\Omega_c)$ can be turned into a Hilbert space with inner product
$$ \langle f, g \rangle_{\mathcal{H}(\Omega_c)}= \sum_{n=0}^\infty c_m f_n \overline{g}_n,$$
where $f(z)= \sum_{n=0}^\infty z^n f_n$, $g(z)= \sum_{n=0}^\infty z^n g_n \in \mathcal{H}(\Omega_c)$.
Moreover, the space $\mathcal{H}(\Omega_c)$ is a reproducing kernel Hilbert space, with reproducing kernel
$$
K_\mathbf{c}(z,w)=\displaystyle \sum_{n=0}^{\infty}\frac{z^n\overline{w}^n}{c_n}, \quad z,w\in \Omega_c.
$$
We refer to Theorem 2.1, Definition 2.2 and Remark 2.3 of \cite{ACS2019} for more details.
\subsection{Range of the Fueter-Sce map}
In this section, we are going to study the action of the Fueter-Sce map on sub-modules of slice monogenic functions of the form
\begin{equation}
\label{slice}
\mathcal{HS}(\Omega_c):=\left\lbrace{\sum_{k=0}^{\infty}x^k\beta_k;\quad \beta_k\in \mathbb{R}_n,\quad \sum_{k=0}^{\infty}c_k|\beta_k|^2<\infty}\right\rbrace, \quad \Omega_c=\left\lbrace{x\in\mathbb{R}^{n+1}; \quad \sum_{k=0}^\infty \frac{|x|^{2k}}{c_k}<+\infty} \right\rbrace,
\end{equation}
where  $c=\{c_k\}_{k \in \mathbb{N}}$ is a non decreasing sequences of real numbers with $c_0=1$. Using the Cauchy-Schwartz inequality, we easily see that functions in $\mathcal{HS}(\Omega_c)$ converge absolutely on $ \Omega_c$.
\\ As we will show, the action of $ \Delta^{\frac{n-1}{2}}_{\mathbb{R}^{n+1}}$ on $ \mathcal{HS}(\Omega_c)$ yields a submodule of axial monogenic functions of the form
\begin{equation}
\label{monos}
\mathcal{HM}(\Omega_b):=\left\lbrace{\sum_{k=0}^{\infty}P_k^n(x)\alpha_k: \quad \alpha_k\in \mathbb{R}_n,\quad \sum_{k=0}^{\infty}b_k|\alpha_k|^2<\infty}\right\rbrace,
\end{equation}
where $ \{b_k\}_{k \in \mathbb{N}_0} \subset \mathbb{R}$ is a suitable non decreasing sequence with $b_0=1$. As for $ \mathcal{HS}(\Omega_c)$, it can be shown that functions in $\mathcal{HM}(\Omega_b)$ converge absolutely on $\Omega_b$.

\begin{defn}
Let us consider $f= \sum_{k=0}^\infty P_{k}^n \alpha_k$ and $g= \sum_{k=0}^\infty P_{k}^n \gamma_k$ in $\mathcal{HM}(\Omega_b)$, then an inner product of $\mathcal{HM}(\Omega_b)$ is defined by
$$ \langle f,g \rangle_{\mathcal{HM}(\Omega_b)}=\sum_{k=0}^\infty b_k\overline{\alpha_k} \gamma_k.$$
Similarly, let $f= \sum_{k=0}^\infty x^k  \beta_k$ and $g= \sum_{k=0}^\infty x^k \delta_k$ in $\mathcal{HS}(\Omega_c)$, then an inner product on $\mathcal{HS}(\Omega_c)$ is defined by
$$ \langle f,g \rangle_{\mathcal{HS}(\Omega_c)}=\sum_{k=0}^\infty c_k\overline{\beta_k} \delta_k.$$
In particular the associate norms are given by 

$$ ||f||_{\mathcal{HS}(\Omega_c)}=\sqrt{\hbox{Re}\left(\langle f,f \rangle_{\mathcal{HS}(\Omega_c)}\right)}, \qquad ||f||_{\mathcal{HM}(\Omega_b)}=\sqrt{\hbox{Re}\left(\langle f,f \rangle_{\mathcal{HM}(\Omega_b)} \right)},$$

\end{defn}
where $\hbox{Re}\left(\langle .\rangle \right)$ denotes the scalar part of the inner product.

\medskip

It turns out  that all the evaluation mapping on $ \mathcal{HM}(\Omega_b)$ are continuous. This implies the existence of a reproducing kernel for the space $ \mathcal{HM}(\Omega_b)$.
\begin{thm}
\label{Chap}
The evaluation map on $ \mathcal{HM}(\Omega_b)$ is bounded. In particular for every function $f \in \mathcal{HM}(\Omega_b)$ we have the following estimate
\begin{equation}
	\displaystyle |f(x)|\leq \left(2^n\sum_{k=0}^\infty \frac{|x|^{2k}}{b_k}\right)^{1/2}||f||_{\mathcal{HM}(\Omega_b)} ; \quad \forall x\in \Omega_b.
\end{equation}
This means that the evaluation map is continuous on $\mathcal{HM}(\Omega_b)$, which is sufficient to endow $\mathcal{HM}(\Omega_b)$ with a reproducing kernel Hilbert space structure. In particular, the reproducing kernel of $ \mathcal{HM}(\Omega_b)$ is given by 
\begin{equation}
\label{gen}
K_{b}(x,y)=\sum_{k=0}^{\infty}\frac{P_k^n(x) \overline{P_k^n(y)}}{b_k}, \quad x,y\in\Omega_b.
\end{equation}
\end{thm}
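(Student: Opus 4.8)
The plan is to establish the pointwise estimate first and then read off the reproducing kernel from it. Write $f=\sum_{k=0}^\infty P_k^n\,\alpha_k\in\mathcal{HM}(\Omega_b)$. First I would bound $|f(x)|$ termwise, using the triangle inequality, the Clifford multiplication bound $|pq|\le 2^{n/2}|p||q|$ recalled in Section 2, and the polynomial estimate $|P_k^n(x)|\le|x|^k$ from \eqref{esti}:
$$ |f(x)|\le\sum_{k=0}^\infty|P_k^n(x)\,\alpha_k|\le 2^{n/2}\sum_{k=0}^\infty|P_k^n(x)|\,|\alpha_k|\le 2^{n/2}\sum_{k=0}^\infty|x|^k\,|\alpha_k|. $$
Next I would apply the Cauchy--Schwarz inequality to the splitting $|x|^k|\alpha_k|=\tfrac{|x|^k}{\sqrt{b_k}}\cdot\sqrt{b_k}\,|\alpha_k|$, giving
$$ |f(x)|\le 2^{n/2}\left(\sum_{k=0}^\infty\frac{|x|^{2k}}{b_k}\right)^{1/2}\left(\sum_{k=0}^\infty b_k|\alpha_k|^2\right)^{1/2}. $$
The last step is to recognise the second factor as the norm: since each $b_k$ is real and $\mathrm{Re}(\overline{\alpha_k}\alpha_k)=|\alpha_k|^2$ for every Clifford number, one has $\sum_k b_k|\alpha_k|^2=\mathrm{Re}\langle f,f\rangle_{\mathcal{HM}(\Omega_b)}=\|f\|^2_{\mathcal{HM}(\Omega_b)}$. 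Absorbing $2^{n/2}=(2^n)^{1/2}$ into the square root yields exactly the claimed estimate, and in particular shows that for each fixed $x\in\Omega_b$ the evaluation map $f\mapsto f(x)$ is a bounded right-linear functional.

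This boundedness is precisely what is needed to equip $\mathcal{HM}(\Omega_b)$ with a reproducing kernel Hilbert module structure. Rather than invoke an abstract representation theorem, I would verify the explicit candidate $K_b(x,y)=\sum_{k=0}^\infty b_k^{-1}P_k^n(x)\overline{P_k^n(y)}$ directly. First I would check that for fixed $y\in\Omega_b$ the function $K_b(\cdot,y)$ lies in $\mathcal{HM}(\Omega_b)$: its $k$-th coefficient is $\alpha_k=b_k^{-1}\overline{P_k^n(y)}$, so $\sum_k b_k|\alpha_k|^2=\sum_k b_k^{-1}|P_k^n(y)|^2\le\sum_k b_k^{-1}|y|^{2k}<\infty$, again by \eqref{esti} and the definition of $\Omega_b$; the same bound guarantees absolute convergence of the kernel series.

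Finally, I would confirm the reproducing property by a direct computation with the inner product. For $f=\sum_k P_k^n\,\gamma_k$ and $K_b(\cdot,y)$ as above, using $\overline{\alpha_k}=b_k^{-1}P_k^n(y)$ one obtains
$$ \langle K_b(\cdot,y),f\rangle_{\mathcal{HM}(\Omega_b)}=\sum_{k=0}^\infty b_k\,\overline{\alpha_k}\,\gamma_k=\sum_{k=0}^\infty P_k^n(y)\,\gamma_k=f(y), $$
which identifies $K_b$ as the reproducing kernel. The main obstacle here is algebraic rather than analytic: one must handle the noncommutativity of $\mathbb{R}_n$ with care, in particular justify the constant $2^{n/2}$ coming from the Clifford norm inequality together with the identity $\mathrm{Re}(\overline{\alpha}\alpha)=|\alpha|^2$, and keep track of which slot of the (conjugate-linear in the first, linear in the second) inner product carries the conjugation so that the kernel reproduces $f(y)$ and not $\overline{f(y)}$. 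All interchanges of summation above are legitimate because every series involved converges absolutely on $\Omega_b$.
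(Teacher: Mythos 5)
Your proposal is correct and follows essentially the same route as the paper: the termwise bound via $|pq|\le 2^{n/2}|p||q|$ and $|P_k^n(x)|\le |x|^k$, Cauchy--Schwarz with the splitting $\tfrac{|x|^k}{\sqrt{b_k}}\cdot\sqrt{b_k}|\alpha_k|$, then membership of $K_b(\cdot,y)$ in $\mathcal{HM}(\Omega_b)$ and a direct coefficientwise verification of the reproducing property. The only (harmless) deviations are that you apply the bound on $|P_k^n(x)|$ before rather than after Cauchy--Schwarz, and that you make explicit the identity $\mathrm{Re}(\overline{\alpha}\alpha)=|\alpha|^2$ that the paper leaves implicit.
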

\begin{proof}
Let $f(x)=\sum_{k=0}^\infty P_k^n(x) \alpha_k$. We first recall that $|ab| \leq 2^{\frac{n}{2}} |a| |b|$ for every $a$, $b \in \mathbb{R}_n$. Then, by the Cauchy-Schwarz inequality and the fact that $|P_k^n(x)|\leq |x|^k$ we get
\begin{eqnarray*}
	|f(x)| & \leq & 2^{\frac{n}{2}}\sum_{k=0}^{\infty}|P_k^n(x)||\alpha_k|=2^{\frac{n}{2}}\sum_{k=0}^{\infty}\frac{|P_k^n(x)|}{\sqrt{b_k}}|\alpha_k| \sqrt{b_k}\\
	& \leq & \left(2^n \sum_{k=0}^\infty \frac{|P_k^n(x)|^2}{b_k}\right)^{\frac{1}{2}} \left( \sum_{k=0}^\infty b_k |\alpha_k|^2\right)^{\frac{1}{2}}\\
	&\leq& \left(2^n\sum_{k=0}^\infty \frac{|x|^{2k}}{b_k}\right)^{1/2}||f||_{\mathcal{HM}(\Omega_b)}.
\end{eqnarray*}
Since $f \in \mathcal{HM}(\Omega_b)$, it is clear that the quantity $\sum_{k=0}^\infty \frac{|x|^{2k}}{b_k}$ is bounded for all $x \in \Omega_b$. Thus the evaluation map is bounded and therefore $ \mathcal{HM}(\Omega_b)$ is a reproducing kernel Hilbert space.
\\ Let us now show that the series in \eqref{gen} is indeed the reproducing kernel for $ \mathcal{HM}(\Omega_b)$. First we show that it is convergent.
Using the Cauchy-Schwarz inequality and the fact that $|P_k^n(x)|\leq |x|^k$ for $x$, $y \in \Omega_b$ we obtain for $x$, $y \in \Omega_b$ that
\begin{eqnarray*}
| K_b(x,y)| & \leq &  2^n \sum_{k=0}^\infty  \frac{|P_k^n(x)||P_k^n(y)|}{b_k}\\
& \leq & 2^n \sum_{k=0}^\infty  \frac{|x|^k |y|^k}{b_k}\\
& \leq & 2^n \left( \sum_{k=0}^{\infty} \frac{|x|^{2k}}{b_k}\right)\left( \sum_{k=0}^{\infty} \frac{|y|^{2k}}{b_k}\right) < \infty.
\end{eqnarray*}

To prove the reproducing kernel property, let us fix $y \in \Omega_b$, so we have
	$$ K_y(x)= \sum_{k=0}^\infty P_k^n(x) \alpha_k(y), \qquad \alpha_k(y):= \frac{\overline{P_k^n(y)}}{b_k}.$$
	We observe that the function $K_y(x)$ belongs to $\mathcal{HM}(\Omega_b)$. Indeed, if $y \in \Omega_b$, by inequality \eqref{esti} we get
	$$ \sum_{k=0}^\infty b_k |\alpha_k(y)|^2= \sum_{k=0}^\infty \frac{|P_k^n(y)|^2}{b_k} \leq \sum_{k=0}^\infty \frac{|y|^{2k}}{b_k}< \infty.$$
Let us consider $f(x)= \sum_{k=0}^\infty P_k^n(x) \beta_k \in \mathcal{HM}(\Omega_b)$, with $ \{\beta_k\}_{k \in \mathbb{N}_0} \subset \mathbb{R}_n$, then we have
	$$ \langle K_y, f \rangle_{\mathcal{HM}(\Omega_b)}= \sum_{k=0}^\infty b_k \overline{\alpha_k(y)} \beta_k=\sum_{k=0}^\infty P_k^n(y) \beta_k=f(y), \qquad \forall y \in \Omega_b.$$
\end{proof}
\begin{thm}
\label{main}
Let $n$ be a fixed odd number and $c=\{c_k\}_{k \in \mathbb{N}}$ be a non decreasing sequence with $c_0=1$. Then
$$ \Delta^{\frac{n-1}{2}}_{\mathbb{R}^{n+1}}\left( \mathcal{HS}(\Omega_c)\right) \subset \mathcal{HM}(\Omega_b),$$
where the sequence $b=\{b_k\}_{k \in \mathbb{N}_0}$ is given by
\begin{equation}
\label{sequence}
b_k:= \frac{c_{k+n-1}(k!)^2}{[(n+k-1)!]^2}, \quad k \in \mathbb{N}_0.
\end{equation}
Moreover, for any $f\in\mathcal{HS}(\Omega_c)$  we have
$$ \|\Delta_{\mathbb{R}^{n+1}}^{\frac{n-1}{2}}f\|_{\mathcal{HM}(\Omega_b)}^2=\gamma^2_n\left(||f||^2_{\mathcal{HS}(\Omega_c)}-\sum_{k=0}^{n-2}c_k \frac{|f^{(k)}(0)|^2}{[k!]^2}\right),
$$
where $\gamma_n:= \frac{(-1)^{\frac{n-1}{2}} 2^{n-1}}{(n-1)!} \left[\Gamma \left( \frac{n+1}{2}\right)\right]^2.$
\end{thm}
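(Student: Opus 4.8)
The plan is to reduce the whole statement to the explicit action of the Fueter-Sce map on the monomials $x^k$ recorded in Corollary \ref{lapal2}, after which only a bookkeeping computation with the two coefficient sequences remains. First I would apply $\Delta^{\frac{n-1}{2}}_{\mathbb{R}^{n+1}}$ term by term to a generic element $f=\sum_{k=0}^\infty x^k\beta_k\in\mathcal{HS}(\Omega_c)$. By Corollary \ref{lapal2} the contributions with $k<n-1$ vanish, while for $k\geq n-1$ one has $\Delta^{\frac{n-1}{2}}_{\mathbb{R}^{n+1}}(x^k)=\gamma_n\frac{k!}{(k-n+1)!}P^n_{k-n+1}(x)$. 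Reindexing by $p=k-(n-1)$ then gives
$$
\Delta^{\frac{n-1}{2}}_{\mathbb{R}^{n+1}}f=\sum_{p=0}^\infty P_p^n(x)\,\alpha_p,\qquad \alpha_p:=\gamma_n\frac{(p+n-1)!}{p!}\,\beta_{p+n-1},
$$
so the image is a candidate element of $\mathcal{HM}(\Omega_b)$ whose coefficients are the $\alpha_p$ above.

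Next I would verify the summability $\sum_p b_p|\alpha_p|^2<\infty$ that places the image in $\mathcal{HM}(\Omega_b)$. Substituting $b_p=c_{p+n-1}(p!)^2/[(p+n-1)!]^2$, the factorial factors in $b_p$ cancel exactly against those in $|\alpha_p|^2$, leaving
$$
\sum_{p=0}^\infty b_p|\alpha_p|^2=\gamma_n^2\sum_{p=0}^\infty c_{p+n-1}|\beta_{p+n-1}|^2=\gamma_n^2\sum_{k=n-1}^\infty c_k|\beta_k|^2.
$$
Because $f\in\mathcal{HS}(\Omega_c)$ the full series $\sum_k c_k|\beta_k|^2$ converges, so this tail is finite; this single computation simultaneously establishes the inclusion $\Delta^{\frac{n-1}{2}}_{\mathbb{R}^{n+1}}(\mathcal{HS}(\Omega_c))\subset\mathcal{HM}(\Omega_b)$ and evaluates the norm $\|\Delta^{\frac{n-1}{2}}_{\mathbb{R}^{n+1}}f\|^2_{\mathcal{HM}(\Omega_b)}$.

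Finally I would identify the tail with the right-hand side of the norm identity. Peeling off the first $n-1$ terms of $\|f\|^2_{\mathcal{HS}(\Omega_c)}=\sum_{k=0}^\infty c_k|\beta_k|^2$ gives $\sum_{k=n-1}^\infty c_k|\beta_k|^2=\|f\|^2_{\mathcal{HS}(\Omega_c)}-\sum_{k=0}^{n-2}c_k|\beta_k|^2$. To match the stated form I would rewrite $\beta_k$ in terms of derivatives: restricting the slice power series to the real axis yields $f(x_0)=\sum_k x_0^k\beta_k$, whence $\beta_k=f^{(k)}(0)/k!$ and $|\beta_k|^2=|f^{(k)}(0)|^2/(k!)^2$, producing the claimed expression.

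The argument is essentially routine once Corollary \ref{lapal2} is available; the points needing a little care are the justification that the Fueter-Sce operator may be applied term by term (which follows from the locally uniform convergence of the slice power series on $\Omega_c$), the interpretation of $\langle\,\cdot\,,\cdot\,\rangle$ in the Clifford-valued setting so that $\mathrm{Re}(\overline{\alpha}\alpha)=|\alpha|^2$ makes the norm agree with $\sum_k b_k|\alpha_k|^2$, and the identification $\beta_k=f^{(k)}(0)/k!$ via the slice derivative on the real line. The main, and rather mild, obstacle is purely the bookkeeping: carrying out the reindexing consistently and checking that the factorials in $b_p$ cancel precisely against the factor $[(p+n-1)!/p!]^2$ coming from $|\alpha_p|^2$.
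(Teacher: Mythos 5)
Your proposal is correct and follows essentially the same route as the paper: term-by-term application of the Fueter--Sce map using the explicit action on monomials (Theorem \ref{lapla1}/Corollary \ref{lapal2}), reindexing, exact cancellation of the factorials in $b_k$ against those in the new coefficients, and identification of the discarded low-order terms via $\beta_k=f^{(k)}(0)/k!$. The only differences are notational (your $\alpha$ and $\beta$ are swapped relative to the paper's), and the points you flag as needing care are the same ones the paper treats implicitly.
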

\begin{proof}
Let $g\in\Delta_{\mathbb{R}^{n+1}}^{\frac{n-1}{2}}\left(\mathcal{HS}(\Omega_c)\right)$, thus there exists $f\in \mathcal{HS}(\Omega_c)$ such that $g= \Delta_{\mathbb{R}^{n+1}}^{\frac{n-1}{2}} f$. Writing
$$ f(x)=\sum_{p=0}^\infty x^p\alpha_p, \quad \hbox{where} \ \quad \{\alpha_p\}_{p \in \mathbb{N}_0} \subset \mathbb{R}_n \quad \hbox{and} \quad \sum_{p=0}^{\infty}|\alpha_p|^2c_p<+\infty,$$
we obtain
\begin{align*}
g(x) &= \Delta_{\mathbb{R}^{n+1}}^{\frac{n-1}{2}}f(x)  \\
&=\sum_{s=n-1}^{\infty} \Delta^{\frac{n-1}{2}}_{\mathbb{R}^{n+1}}(x^s)\alpha_s\\
&=\sum_{k=0}^{\infty} \Delta^{\frac{n-1}{2}}_{\mathbb{R}^{n+1}}(x^{k+n-1})\alpha_{k+n-1}\\
&= \gamma_n \sum_{k=0}^{\infty} \frac{(n+k-1)!}{k!}P^n_k(x)\alpha_{k+n-1}\\
&:=\sum_{k=0}^{\infty} P^n_k(x)\beta_k;
\end{align*}
where the coefficients $\{\beta_k\}_{k \in \mathbb{N}_0}$ are given by
\begin{equation}
\label{ali1}
\beta_k:= \gamma_n \frac{(n+k-1)!}{k!} \alpha_{k+n-1}, \quad k \in \mathbb{N}_0.
\end{equation}
Now, we set \begin{equation} \label{b}
b_k:= \frac{c_{k+n-1}(k!)^2}{[(n+k-1)!]^2};\quad \forall k\geq 0.
\end{equation}

Then, recalling the definition of the sub-module of monogenic functions $\mathcal{HM}(\Omega_b)$ (see \eqref{monos}) we have the following equalities

\begin{align*}
\|\Delta_{\mathbb{R}^{n+1}}^{\frac{n-1}{2}}f\|^2_{\mathcal{HM}(\Omega_b)} &=\sum_{k=0}^{\infty}b_k|\beta_k|^2  \\
&= \gamma_n^2 \sum_{k=0}^{\infty}b_k \frac{[(n+k-1)!]^2}{(k!)^2}|\alpha_{k+n-1}|^2 \\
&= \gamma_n^2\sum_{k=0}^{\infty}c_{k+n-1}|\alpha_{k+n-1}|^2\\
&=\gamma_n^2 \sum_{p=n-1}^{\infty} c_p|\alpha_p|^2 .
\end{align*}
Now, we recall the the coefficients $ \{\alpha_{k}\}_{k \geq 0}$ appearing in the Taylor series of the slice monogenic functions $f$ are given by 
$$\displaystyle  \begin{cases}
\vspace*{-2mm}
\alpha_0:=f(0)\\
\vspace*{-2mm}
.\\
\vspace*{-2mm}
.\\
\vspace*{-2mm}
.\\
\alpha_{n-2}:=\frac{f^{(n-2)}(0)}{(n-2)!}.
\end{cases}
$$
Finally, we conclude that
$$\|\Delta_{\mathbb{R}^{n+1}}^{\frac{n-1}{2}} f\|^2_{\mathcal{HM}(\Omega_b)}=\gamma_n^2 \left(||f||^2_{\mathcal{HS}(\Omega_c)}-\sum_{k=0}^{n-2}c_k\frac{|f^{(k)}(0)|^2}{[k!]^2}\right).$$
\end{proof}
\begin{prop}
The Fueter-Sce map, which goes from the space $\mathcal{HS}(\Omega_c)$ to $ \mathcal{HM}(\Omega_b)$ is surjective, i.e.,
$$\Delta_{\mathbb{R}^{n+1}}^{\frac{n-1}{2}}(\mathcal{HS}(\Omega_c))=\mathcal{HM}(\Omega_b).$$
\end{prop}
\begin{proof}
We have to show that for any $g \in \mathcal{HM}(\Omega_b)$ there exists a function $f \in \mathcal{HS}(\Omega_c)$ such that
$$ g(x)= \Delta_{\mathbb{R}^{n+1}}^{\frac{n-1}{2}} f(x).$$
By \eqref{monos} we know that 
\begin{equation}
\label{formulan}
g(x)= \sum_{k=0}^\infty P_{k}^n(x) \alpha_k, \qquad \sum_{k=0}^\infty b_k | \alpha_k|^2 < \infty,
\end{equation}
where $ \{\alpha_k\}_{k \geq 0} \subset\mathbb{R}_n$. By Theorem \ref{lapla1} we have
\begin{eqnarray*}
g(x)&=& \sum_{k=0}^\infty P_{k}^n(x) \alpha_k\\
&=& \sum_{k=0}^\infty  \frac{k! \Delta_{\mathbb{R}^{n+1}}^{\frac{n-1}{2}}(x^{n+k-1})}{\gamma_n (n+k-1)!} \alpha_{k}\\
&=& \Delta_{\mathbb{R}^{n+1}}^{\frac{n-1}{2}} \left(\sum_{k=n-1}^{\infty} x^{k} \frac{(k+1-n)!}{\gamma_n k!} \alpha_{k+1-n} \right)\\
&:=& \Delta_{\mathbb{R}^{n+1}}^{\frac{n-1}{2}} f(x),
\end{eqnarray*}
where $ \beta_k:=\frac{(k+1-n)!}{\gamma_n k!} \alpha_{k+1-n}$. In order to prove that $f \in  \mathcal{HS}(\Omega_c)$ we have to show $ \sum_{k=0}^\infty c_k| \beta_k|^2 < \infty$. By Theorem \ref{main} and the condition of convergence in \eqref{formulan} we have
\begin{eqnarray*}
\sum_{k=0}^\infty c_k| \beta_k|^2 &=& \sum_{k=n-1}^\infty c_k \frac{[(k+1-n)!]^2}{\gamma_n^2 [k!]^2} | \alpha_{k+1-n}|^2\\
&=& \sum_{k=0}^\infty c_{k+1-n} \frac{[k!]^2}{[(k+1-n)!]^2}| \alpha_{k}|^2\\
&=& \sum_{k=0}^\infty b_k | \alpha_k|^2 < \infty,
\end{eqnarray*}
which proves the result 
\end{proof}

If we consider the space
$$ \mathcal{HS}_{c}^{0}:= \left\{f \in \mathcal{HS}(\Omega_c), \, f(0)=...= f^{(n-2)}(0)=0\right\},$$
we have the following result.
\begin{cor}
Let $n$ be an odd number. The Fueter-Sce mapping $ \Delta_{\mathbb{R}^{n+1}}^{\frac{n-1}{2}}$ defines an isometry (up to a constant) from $\mathcal{HS}_{c}^{0}$ onto $ \mathcal{HM}$. In particular, 
$$ \| \Delta_{\mathbb{R}^{n+1}}^{\frac{n-1}{2}}(f) \|_{\mathcal{HM}(\Omega_b)}= \gamma_n ||f||_{\mathcal{HS}_c^0(\Omega_c)}.$$
Moreover, the inner product is preserved i.e.,
$$ \langle \Delta_{\mathbb{R}^{n+1}}^{\frac{n-1}{2}} f, \Delta_{\mathbb{R}^{n+1}}^{\frac{n-1}{2}} g \rangle_{\mathcal{HM}(\Omega_b)} = \gamma_n^2 \langle f,g \rangle_{\mathcal{HS}_c^0(\Omega_c)}.$$
\end{cor}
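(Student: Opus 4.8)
The plan is to deduce everything from Theorem \ref{main} together with the surjectivity established in the preceding Proposition. First I would observe that by definition of $\mathcal{HS}_c^0$, every $f$ in this space satisfies $f(0)=f'(0)=\dots=f^{(n-2)}(0)=0$, so the correction term $\sum_{k=0}^{n-2}c_k |f^{(k)}(0)|^2/[k!]^2$ appearing in Theorem \ref{main} vanishes identically. Substituting this into the norm formula of Theorem \ref{main} immediately yields
$$\|\Delta_{\mathbb{R}^{n+1}}^{\frac{n-1}{2}} f\|_{\mathcal{HM}(\Omega_b)}^2=\gamma_n^2\,\|f\|_{\mathcal{HS}(\Omega_c)}^2=\gamma_n^2\,\|f\|_{\mathcal{HS}_c^0(\Omega_c)}^2,$$
which is the claimed norm identity after taking square roots, recalling that $\gamma_n$ is a nonzero real constant.

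Next I would address bijectivity onto $\mathcal{HM}(\Omega_b)$. Surjectivity is already contained in the previous Proposition; moreover, inspecting its proof, the preimage $f(x)=\sum_{k=n-1}^\infty x^k\beta_k$ constructed there involves only powers $x^k$ with $k\geq n-1$, hence $f(0)=\dots=f^{(n-2)}(0)=0$ and $f\in\mathcal{HS}_c^0$. For injectivity I would invoke Corollary \ref{lapal2}, which gives $\Delta_{\mathbb{R}^{n+1}}^{\frac{n-1}{2}}(x^j)=0$ exactly for $j<n-1$; thus the kernel of the Fueter-Sce map on $\mathcal{HS}(\Omega_c)$ is spanned by the low-degree monomials $1,x,\dots,x^{n-2}$, all of which are excluded from $\mathcal{HS}_c^0$. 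Consequently the restriction of $\Delta_{\mathbb{R}^{n+1}}^{\frac{n-1}{2}}$ to $\mathcal{HS}_c^0$ is a bijection onto $\mathcal{HM}(\Omega_b)$, and the norm identity shows it is an isometry up to the factor $\gamma_n$.

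Finally, for the preservation of the (Clifford-valued) inner product I would argue by a direct coefficient-wise computation rather than by polarization, since the inner products are defined term by term. Writing $f=\sum_{p\geq n-1}x^p\alpha_p$ and $g=\sum_{p\geq n-1}x^p\delta_p$ in $\mathcal{HS}_c^0$, formula \eqref{ali1} gives the image coefficients $\beta_k=\gamma_n\frac{(n+k-1)!}{k!}\alpha_{k+n-1}$ and similarly $\mu_k=\gamma_n\frac{(n+k-1)!}{k!}\delta_{k+n-1}$. Since $\gamma_n$ and the factorial ratios are real scalars, they commute with Clifford conjugation, so using $b_k\,[(n+k-1)!/k!]^2=c_{k+n-1}$ from \eqref{b} I would obtain
$$\langle \Delta_{\mathbb{R}^{n+1}}^{\frac{n-1}{2}} f, \Delta_{\mathbb{R}^{n+1}}^{\frac{n-1}{2}} g \rangle_{\mathcal{HM}(\Omega_b)}=\sum_{k=0}^\infty b_k\,\overline{\beta_k}\mu_k=\gamma_n^2\sum_{p=n-1}^\infty c_p\,\overline{\alpha_p}\delta_p=\gamma_n^2\,\langle f,g\rangle_{\mathcal{HS}_c^0(\Omega_c)},$$
the last equality using that $\alpha_0=\dots=\alpha_{n-2}=0$. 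I do not expect a genuine obstacle here: the only points requiring care are keeping the real scalars outside the Clifford conjugation and confirming that the preimage furnished by the surjectivity proof indeed lands in $\mathcal{HS}_c^0$; both are routine verifications.
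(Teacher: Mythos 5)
Your proposal is correct and follows exactly the route the paper intends: the corollary is stated without proof as an immediate consequence of Theorem \ref{main} (whose correction term vanishes on $\mathcal{HS}_c^0$) together with the surjectivity proposition, and your coefficient-wise verification of the inner-product identity via \eqref{ali1} and \eqref{b} is the natural way to fill in the details. The only caveat, which is already present in the paper's own statement, is that $\gamma_n$ can be negative, so the norm identity should strictly read $\|\Delta_{\mathbb{R}^{n+1}}^{\frac{n-1}{2}}(f)\|_{\mathcal{HM}(\Omega_b)}=|\gamma_n|\,\|f\|_{\mathcal{HS}_c^0(\Omega_c)}$.
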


\begin{rem}
The above result constitutes an extension to all odd dimensions $n$ of \cite[Thm. 6.1]{ADS2022} where similar results were proven in the quaternionic case, i.e., for $n=3$. 
\end{rem}
Now, we show some examples of slice monogenic function spaces and their Fueter-Sce mapping ranges, the associated sequences $c$ and $b$ and the Fueter-Sce mapping norm. In the sequel we denote by $ \mathbb{B}$ the unit ball defined in the following way
$$ \mathbb{B}:= \left\{x \in \mathbb{R}^{n+1}\, : \, \sum_{i=0}^n x_{i}^2 <1 \right\}.$$
\begin{ex}[Hardy space]
\label{01}
In this case we consider $c_k=1$. Then we define the slice monogenic Hardy space as
$$\mathsf{HA}(\mathbb{B}):= \left\{\sum_{k=0}^\infty x^k \alpha_k: \quad \alpha_k \in \mathbb{R}_n, \quad \sum_{k=0}^\infty |\alpha_k|^2< \infty \right\}.$$
The action of the Fueter-Sce map on $\mathsf{HA}(\mathbb{B})$ gives the axially monogenic Hardy space. By Theorem \ref{main} we know that this space is given by
\begin{equation}
\label{space1}
\mathcal{HA}_n(\Omega_b):=\left\lbrace{\sum_{k=0}^{\infty}P_k^n(x)\alpha_k, \quad \alpha_k\in \mathbb{R}_n, \quad \sum_{k=0}^{\infty}b_k|\alpha_k|^2<\infty}\right\rbrace,
\end{equation}
where 
$$ b_k= \frac{[k!]^2}{[(k+n-1)!]^2}.$$
By \eqref{monos}, the ratio test and the fact that
$\lim_{k \to \infty} \frac{b_k}{b_{k+1}}=1,$
we get that the domain of convergence $\Omega_b$ coincides with the unit ball $ \mathbb{B} \subset \mathbb{R}^{n+1}$. 
Moreover, by Theorem \ref{main} we have the following equality
$$ \|\Delta_{\mathbb{R}^{n+1}}^{\frac{n-1}{2}} f\|_{\mathcal{HA}_{n}(\mathbb{B})}^2=\gamma_n^2\left(||f||_{\mathsf{HA}(\mathbb{B})}^{2}-\sum_{k=0}^{n-2}\frac{|f^{k}(0)|^2}{[k!]^2}\right).$$
\end{ex}

\begin{ex}[Bergman space]
In this case we consider $c_k=\frac{1}{k+1}$, i.e. the slice monogenic Bergman space is defined as
$$\mathsf{A}(\mathbb{B}):= \left\{\sum_{k=0}^\infty x^k \alpha_k: \quad \alpha_k \in \mathbb{R}_n,  \quad \sum_{k=0}^\infty \frac{|\alpha_k|^2}{k+1}< \infty\right\}.$$
The action of the Fueter-Sce map on $\mathsf{A}(\mathbb{B})$ gives the axially monogenic Bergman space. By Theorem \ref{main}, this space is given by
$$ \mathcal{A}(\Omega_b):=\left\lbrace{\sum_{k=0}^{\infty}P_k^n(x)\alpha_k,\quad \alpha_k\in \mathbb{R}_n,\quad \sum_{k=0}^{\infty}b_k|\alpha_k|^2<\infty}\right\rbrace,$$
where 
$$ b_k=\frac{(k!)^2}{(k+n)!(k+n-1)!}.$$
Since $ \lim_{k \to \infty}\frac{b_k}{b_{k+1}}=1$, by the ratio test and \eqref{monos} we conclude that $\Omega_b= \mathbb{B}$. 
Furthermore, by Theorem \ref{main} we have the following equality
$$ ||\Delta_{\mathbb{R}^{n+1}}^{\frac{n-1}{2}} f||_{\mathcal{A}(\mathbb{B})}^2=\gamma_n^2 \left(||f||_{\mathsf{A}(\mathbb{B})}^{2}-\sum_{k=0}^{n-2}\frac{|f^{k}(0)|^2}{(k+1)!k!}\right).$$
\end{ex}
\begin{ex}[Dirichlet space]
In this case we consider $c_k=k$. Thus, we define the slice monogenic Dirichlet space as
$$\mathsf{D}(\mathbb{B}):= \left\{\sum_{k=0}^\infty x^k \alpha_k: \quad \alpha_k \in \mathbb{R}_n , \quad \sum_{k=1}^\infty k|\alpha_k|^2< \infty \right\}.$$
The action of the Fueter-Sce map on $\mathsf{D}(\mathbb{B})$ gives the axially monogenic Dirichlet space. By Theorem \ref{main} we know that this space is given by
$$ \mathcal{D}(\Omega_b):=\left\lbrace{\sum_{k=0}^{\infty}P_k^n(x)\alpha_k,\quad \alpha_k\in \mathbb{R}_n, \quad \sum_{k=0}^{\infty}b_k|\alpha_k|^2<\infty}\right\rbrace,$$
where 
$$ b_k=\frac{(k!)^2}{(k+n-1)!(k+n-2)!}.$$
We observe that $ \Omega_b=\mathbb{B}$. Indeed. it suffices to use the ratio test and the fact that $ \lim_{k \to \infty} \frac{b_k}{b_{k+1}}=1$. 
Moreover, by Theorem \ref{main}, we have the following equality
$$ ||\Delta_{\mathbb{R}^{n+1}}^{\frac{n-1}{2}} f||_{\mathcal{D}(\mathbb{B})}^2=\gamma_n^2 \left(||f||_{\mathsf{D}(\mathbb{B})}^{2}-\sum_{k=1}^{n-2}\frac{|f^{(k)}(0)|^2}{k!(k-1)!}\right).$$
\end{ex}

\begin{ex}[Fock space]
\label{02} 
In this case we have $c_k=k!$. Therefore we define the slice monogenic Fock space as
$$\mathsf{SB}(\mathbb{R}^{n+1}):= \left\{ \sum_{k=0}^\infty x^k \alpha_k: \quad \alpha_k \in \mathbb{R}_n,  \quad \sum_{k=0}^\infty k!|\alpha_k|^2< \infty\right\}.$$
The action of the Fueter-Sce map on $\mathsf{SB}(\mathbb{B})$ gives the axially monogenic Fock space. By Theorem \ref{main} we know that this space is defined by
\begin{equation}
\label{space2}
\mathcal{SB}(\Omega_b):=\left\lbrace{\sum_{k=0}^{\infty}P_k^n(x)\alpha_k: \quad \alpha_k\in \mathbb{R}_n, \quad \sum_{k=0}^{\infty}b_k|\alpha_k|^2<\infty}\right\rbrace,
\end{equation}
where 

$$ b_k=\frac{(k!)^2}{(k+n-1)!}.$$
In this case, $ \lim_{k \to \infty} \frac{b_{k}}{b_{k+1}}=0$, hence by the ratio test we obtain that $\Omega_b= \mathbb{R}^{n+1}$. 
Moreover, by Theorem \ref{main} we have the following equality
$$ \|\Delta_{\mathbb{R}^{n+1}}^{\frac{n-1}{2}} f\|_{\mathcal{SB}(\mathbb{R}^{n+1})}^2=\gamma_n^2\left(\|f\|_{\mathsf{SB}(\mathbb{R}^{n+1})}^{2}-\sum_{k=0}^{n-2}\frac{|f^{(k)}(0)|^2}{k!}\right).$$
	
\end{ex}

In the table below, we summarize the most relevant information concerning the previous examples.  

\begin{center}
	\begin{tabular}{|c|c|c|c|}
		\hline
		\hline
		\rule[-1mm]{-5mm}{-2cm}
		& $\displaystyle c_k$ & $\displaystyle b_k$ & $\| \displaystyle\Delta_{\mathbb{R}^{n+1}}^{\frac{n-1}{2}} f \|_{\mathcal{HM}_b(\Omega_b)}$\\
		\hline
		Hardy&$\displaystyle 1$& $ \displaystyle \frac{[k!]^2}{[(k+n-1)!]^2}$& $\displaystyle\gamma_n^2\left(||f||_{\mathsf{HA}(\mathbb{B})}^{2}-\sum_{k=0}^{n-2}\frac{|f^{k}(0)|^2}{[k!]^2}\right)$\\
		\hline
		Fock&$ \displaystyle k!$&$ \displaystyle \frac{(k!)^2}{(k+n-1)!}$&$\displaystyle \gamma_n^2\left(\|f\|_{\mathsf{SB}(\mathbb{R}^{n+1})}^{2}-\sum_{k=0}^{n-2}\frac{|f^{k}(0)|^2}{k!}\right)$\\
		\hline
		Bergman&$ \displaystyle \frac{1}{k+1}$& $\displaystyle \frac{(k!)^2}{(k+n)!(k+n-1)!}$&$\displaystyle \gamma_n^2\left(||f||_{\mathsf{A}(\mathbb{B})}^{2}-\sum_{k=0}^{n-2}\frac{|f^{k}(0)|^2}{(k+1)!k!}\right)$\\
		\hline
		Dirichlet& $\displaystyle k$& $\displaystyle \frac{(k!)^2}{(k+n-1)!(k+n-2)!}$ &$\displaystyle \gamma_n^2\left(||f||_{\mathsf{D}(\mathbb{B})}^{2}-\sum_{k=1}^{n-2}\frac{|f^{k}(0)|^2}{k!(k-1)!}\right)$\\
		\hline
		\hline
	\end{tabular}
\end{center}

\subsection{Kernel of the Fueter-Sce map}
In this subsection we turn our attention to the kernel of the Fueter-Sce map,  i.e.,

$$ \ker(\Delta_{\mathbb{R}^{n+1}}^{\frac{n-1}{2}}):=\left\lbrace{f\in \mathcal{SM}(\Omega),\quad \Delta^{\frac{n-1}{2}}_{\mathbb{R}^{n+1}}f(x)=0}\right\rbrace,$$

where the set $ \Omega$ is an axially symmetric slice domain in $ \mathbb{R}^{n+1}$.
\begin{thm}\label{kerth}
Let $n$ be a fixed odd number and let $\Omega$ be an axially symmetric slice domain on $\mathbb{R}^{n+1}$. Then,
a slice monogenic function $f$ belongs to $\ker(\Delta_{\mathbb{R}^{n+1}}^{\frac{n-1}{2}})$ if and only if $f$ is a $ \mathbb{R}_n$-valued polynomial of degree $n-2$ in the variable $x$, i.e.
\begin{equation}
f(x)=\sum_{k=0}^{n-2}x^k\alpha_{k}, \qquad \alpha_k \in \mathbb{R}_n.
\end{equation}
\end{thm}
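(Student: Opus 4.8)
The plan is to prove the two implications separately: sufficiency follows at once from the explicit computation in Corollary \ref{lapal2}, while necessity rests on the commuting square of Theorem \ref{FG} together with the injectivity of the generalized CK-extension. Throughout I assume $n\geq 3$, since for $n=1$ the operator $\Delta^{\frac{n-1}{2}}_{\mathbb{R}^{n+1}}$ is the identity, its kernel is $\{0\}$, and the claimed expansion is the empty sum, so the statement is trivially true.

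For sufficiency, suppose $f(x)=\sum_{k=0}^{n-2}x^k\alpha_k$ with $\alpha_k\in\mathbb{R}_n$. Each exponent satisfies $k\leq n-2<n-1$, so Corollary \ref{lapal2} gives $\Delta^{\frac{n-1}{2}}_{\mathbb{R}^{n+1}}(x^k)=0$; by $\mathbb{R}_n$-linearity of the operator on the right, $\Delta^{\frac{n-1}{2}}_{\mathbb{R}^{n+1}}f=0$, so $f\in\ker(\Delta^{\frac{n-1}{2}}_{\mathbb{R}^{n+1}})$. For necessity, I would use the isomorphism $\mathcal{SM}(\Omega)\simeq\mathcal{A}(\Omega_1)\otimes\mathbb{R}_n$, with $\Omega_1=\Omega\cap\mathbb{R}$, to write $f=S[f_0]$ where $f_0:=f|_{\mathbb{R}}\in\mathcal{A}(\Omega_1)\otimes\mathbb{R}_n$ (recall that restriction to the real line inverts $S$). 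The commuting diagram \eqref{Diag2} of Theorem \ref{FG} then yields
\[
\Delta^{\frac{n-1}{2}}_{\mathbb{R}^{n+1}}f=\Delta^{\frac{n-1}{2}}_{\mathbb{R}^{n+1}}S[f_0]=\gamma_n\, GCK[f_0^{(n-1)}].
\]
Since $\gamma_n\neq 0$ and $GCK$ is an isomorphism by Theorem \ref{GCK}, the hypothesis $\Delta^{\frac{n-1}{2}}_{\mathbb{R}^{n+1}}f=0$ is equivalent to $f_0^{(n-1)}=0$.

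It then remains to translate $f_0^{(n-1)}=0$ back into a statement about $f$. Under $\mathcal{A}(\Omega_1)\simeq\mathcal{H}(D)$ the function $f_0$ corresponds to an intrinsic holomorphic function on the connected intrinsic domain $D$; a vanishing $(n-1)$-th derivative on the open real set $\Omega_1$ propagates by analyticity, so $f_0$ is globally an $\mathbb{R}_n$-valued polynomial of degree at most $n-2$, say $f_0(x_0)=\sum_{k=0}^{n-2}x_0^k\alpha_k$. Applying the slice extension $S=\exp(\underline{x}\,\partial_{x_0})$ term by term and using that the scalar $x_0$ commutes with $\underline{x}$, one finds $S[x_0^k]=\sum_{j=0}^{k}\binom{k}{j}x_0^{k-j}\underline{x}^j=(x_0+\underline{x})^k=x^k$, whence $f=S[f_0]=\sum_{k=0}^{n-2}x^k\alpha_k$, as required.

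The step I expect to require the most care is the passage from $f_0^{(n-1)}=0$ to the \emph{global} polynomial form: because $\Omega_1=\Omega\cap\mathbb{R}$ need not be connected a priori, one must pass to the intrinsic holomorphic extension of $f_0$ on the connected domain $D$ and invoke the identity principle to exclude distinct polynomials on distinct components of $\Omega_1$. The identity $S[x_0^k]=x^k$ is routine but essential, since it is precisely what converts the one-variable polynomial conclusion into the paravector-monomial expansion appearing in the statement.
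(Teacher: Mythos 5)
Your proof is correct, but it takes a different route from the paper's main argument: interestingly, the route you chose is precisely the alternative that the authors sketch in the remark immediately following their proof. The paper's own proof of necessity stays inside the Clifford--Appell framework: after translating so that $0\in\Omega$, it expands $f(x)=\sum_{k=0}^{\infty}x^{k}\alpha_{k}$, applies $\Delta^{\frac{n-1}{2}}_{\mathbb{R}^{n+1}}$ termwise via Corollary \ref{lapal2} to obtain $\sum_{\ell=0}^{\infty}P^{n}_{\ell}(x)\beta_{\ell}^{n}=0$ with $\beta_{\ell}^{n}=\gamma_{n}\frac{(n+\ell-1)!}{\ell!}\alpha_{\ell+n-1}$, and then restricts to the real line, where $P^{n}_{\ell}(x_{0})=x_{0}^{\ell}$ forces all $\beta_{\ell}^{n}$, hence all $\alpha_{k}$ with $k\geq n-1$, to vanish. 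Your argument instead factors $f=S[f_{0}]$ and uses the commuting square of Theorem \ref{FG} together with the injectivity of $GCK$ to reduce everything to $f_{0}^{(n-1)}=0$. What each buys: the paper's computation is entirely elementary, needs no identity-principle discussion, and keeps the Appell polynomials in the foreground (which the authors explicitly want, as they reuse this expansion later); your version is shorter and more structural, avoids the Taylor expansion and the normalization $0\in\Omega$ altogether, but puts the burden on the connectivity step --- which you handle correctly by passing to the intrinsic holomorphic extension on the connected domain $D$ and invoking the identity principle, exactly the point the paper's remark glosses over. The verification $S[x_{0}^{k}]=(x_{0}+\underline{x})^{k}=x^{k}$ and the $n=1$ degenerate case are both handled properly.
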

\begin{proof}
Without lost of generality we assume that the axially symmetric slice domain contains the origin. If this is not the case one can proceed using a translation argument.
First, it is clear that if $f$ is a function of the form $$f(x)=\alpha_0+\alpha_1 x+...+\alpha_{n-2}x^{n-2},$$ then $f$ is slice monogenic and belongs to the kernel of $\Delta_{\mathbb{R}^{n+1}}^{\frac{n-1}{2}}$.
\\Conversely, let $f\in \ker(\Delta_{\mathbb{R}^{n+1}}^{\frac{n-1}{2}})$.
We know by the series expansion theorem for slice monogenic functions that $f$ can be expanded as
\begin{equation}
\label{somma1}
f(x)= \sum_{k=0}^{\infty}x^k \alpha_k, \quad \forall x\in \Omega.
\end{equation}

Now, we apply the Fueter-Sce map $\Delta_{\mathbb{R}^{n+1}}^{\frac{n-1}{2}}$ to \eqref{somma1}. By Corollary \ref{lapal2} and after some manipulations we get

\begin{equation}
\Delta^{\frac{n-1}{2}}_{\mathbb{R}^{n+1}} f(x)= \sum_{\ell=0}^{\infty} P^n_{\ell}(x)\beta_\ell^n;
\end{equation}

where the coefficients of this polynomial are given by (see \eqref{ali1})
\begin{equation}
\beta_\ell^n:= \gamma_n  \frac{(n+ \ell-1)!}{\ell!}\alpha_{\ell+n-1}, \quad \ell \in \mathbb{N}_0.
\end{equation}

Since $\Delta^{\frac{n-1}{2}}_{\mathbb{R}^{n+1}}f=0$, for any $x \in \Omega$ we have
\begin{equation}
\sum_{\ell=0}^{\infty}P^n_{\ell}(x)\beta_\ell^n =0.
\end{equation}
So, by restricting to the real line we get
\begin{equation}
\displaystyle \sum_{\ell=0}^{\infty}x_0^{\ell}\beta_\ell^n =0.
\end{equation}
In particular, this shows for every $\ell=0,1,...$ that

\begin{equation}
\alpha_{\ell+n-1}=0.
\end{equation}
Hence, we have $\alpha_k=0$ for every $k=n-1,n,....$ . So, this implies that
$$f(x)= \sum_{k=0}^{n-2}x^k \alpha_k, \quad \forall x\in \Omega,$$
which finishes the proof.
\end{proof}
\begin{rem}
We used the Clifford-Appell approach for the proof of Theorem \ref{kerth} since it will be of interest in the sequel and it will be also useful to characterize the functions that belong to the kernel of the operator $\partial$, see \cite{ANST}. However, it is possible to show this result using Theorem \ref{FG}. Indeed $ \Delta_{\mathbb{R}^{n+1}}^{\frac{n-1}{2}}f=0$ if and only if
$$ GCK[f^{(n-1)}(x_0)]=0.$$
This holds if and only if
$$ f^{(n-1)}(x_0)=0.$$
Finally, since $f$ si defined in a piecewise connected open set $ \mathbb{R}$, it can be uniquely extended to a holomorphic function to a connected set in $ \mathbb{R}^2$. Thus $f$ is a polynomial of degree at most $n-2$, i.e.
$$ f(x)= \sum_{k=0}^{n-2}x^k \alpha_k, \quad  \{\alpha_k\}_{0 \leq k \leq n-2}  \subset \mathbb{R}_n.$$
\end{rem}

\section{The Clifford-Appell-Fock space in $ \mathbb{R}^{n+1}$}
In this section our aim is to study an extension of the Fock space to the monogenic setting. In the complex setting, an entire function $ f(z)= \sum_{n=0}^\infty z^n a_n$ belongs to the complex Fock space if its Taylor coefficient satisfy $ \sum_{n=0}^\infty n! |a_n|^2< \infty$, see \cite{F}.
A Fock space in the slice monogenic setting was studied in \cite{ACSS}.
In this work, we will further extend the Fock space in the axially monogenic setting. 
To do this we will make use of the generalized CK-extension. The main advantages of this tool, with respect to the Fueter-Sce theorem, is that it is an isomorphism. Therefore it preserves the structure of the complex Fock space, see \eqref{GCK2}.
\\ Let us define the Clifford-Appell-Fock space in $ \mathbb{R}^{n+1}$. This space is defined, for a fixed odd number $n$, as
\begin{equation}
\label{space21}
\mathcal{F}(\mathbb{R}^{n+1}):=\left\lbrace{ \sum_{k=0}^{\infty}P_k^n(x)\alpha_k: \quad\alpha_k\in\mathbb{R}_n,\quad\sum_{k=0}^{\infty}k!|\alpha_k|^2<\infty}\right\rbrace.
\end{equation}
It follows from the classical case that the Clifford-Appell-Fock space is defined in the whole space, because for any $x \in \mathbb{R}^{n+1}$ we know that $| P_k^n(x)| \leq |x|^k$.
\begin{rem}
For any $k \geq 0$ and $n$ fixed odd number we have that $ \frac{k!}{(k+n-1)!}  \leq k!$. From this it follows that he Fock space introduced in \eqref{space21} is contained in the Fock space defined in \eqref{space2} .
\end{rem}
As we already saw, this kind of space can be endowed with a Hilbert module structure as follows.
\begin{defn}
Let $n$ be a fixed odd number. Let us consider $f(x)= \sum_{k=0}^\infty P_k^n(x) \alpha_k$ and $ g(x)=\sum_{k=0}^\infty P_k^n(x) \beta_k$ in $\mathcal{F}(\mathbb{R}^{n+1})$. We define the scalar product as
$$ \langle f,g \rangle_{\mathcal{F}(\mathbb{R}^{n+1})}= \sum_{k=0}^\infty k! \overline{\alpha}_k \beta_k.$$
\end{defn}

\begin{thm}
\label{eval}
Let $n$ be a fixed odd number. For any $f \in  \mathcal{F}(\mathbb{R}^{n+1})$ and $x \in \mathbb{R}^{n+1}$, we have
$$ |f(x)| \leq 2^{\frac{n}{2}} e^{\frac{|x|^2}{2}} \| f \|_{\mathcal{F}(\mathbb{R}^{n+1})}.$$
Moreover, the reproducing kernel of the Clifford-Appell-Fock space $\mathcal{F}(\mathbb{R}^{n+1})$ is given by the convergent series
\begin{equation}
	\label{rkh}
	K_{\mathcal{F}}(x,y)=\sum_{k=0}^{\infty}\frac{P_k^n(x) \overline{P_k^n(y)}}{k!}, \quad x,y\in\mathbb{R}^{n+1}.
\end{equation}
\end{thm}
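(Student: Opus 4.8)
The plan is to observe that $\mathcal{F}(\mathbb{R}^{n+1})$ is exactly the space $\mathcal{HM}(\Omega_b)$ of \eqref{monos} associated with the weight sequence $b_k=k!$, so that both assertions follow by specializing Theorem \ref{Chap}. Indeed, the defining convergence condition $\sum_{k=0}^\infty k!|\alpha_k|^2<\infty$ and the inner product $\langle f,g\rangle_{\mathcal{F}(\mathbb{R}^{n+1})}=\sum_{k=0}^\infty k!\,\overline{\alpha}_k\beta_k$ coincide with those of $\mathcal{HM}(\Omega_b)$ for $b_k=k!$, and (as noted in Example \ref{02}) the domain of convergence is all of $\mathbb{R}^{n+1}$, since $\sum_{k=0}^\infty |x|^{2k}/k!=e^{|x|^2}<\infty$ for every $x$.

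For the pointwise bound I would reproduce the estimate in the proof of Theorem \ref{Chap} with this choice of $b_k$. Writing $f(x)=\sum_{k=0}^\infty P_k^n(x)\alpha_k$, using $|ab|\leq 2^{n/2}|a||b|$, the bound $|P_k^n(x)|\leq|x|^k$ from \eqref{esti}, and the Cauchy--Schwarz inequality, one gets
\begin{equation*}
|f(x)|\leq 2^{n/2}\sum_{k=0}^\infty \frac{|P_k^n(x)|}{\sqrt{k!}}\sqrt{k!}\,|\alpha_k|\leq 2^{n/2}\left(\sum_{k=0}^\infty \frac{|P_k^n(x)|^2}{k!}\right)^{1/2}\|f\|_{\mathcal{F}(\mathbb{R}^{n+1})}.
\end{equation*}
Bounding $|P_k^n(x)|^2\leq|x|^{2k}$ and summing the exponential series $\sum_{k=0}^\infty |x|^{2k}/k!=e^{|x|^2}$ yields the factor $2^{n/2}e^{|x|^2/2}$, which is the claimed inequality. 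In particular this shows every evaluation functional is bounded, so $\mathcal{F}(\mathbb{R}^{n+1})$ carries a reproducing kernel Hilbert module structure.

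To identify the kernel I would specialize \eqref{gen}. First the series $\sum_{k=0}^\infty P_k^n(x)\overline{P_k^n(y)}/k!$ converges: by $|P_k^n(x)|\leq|x|^k$, $|ab|\le 2^{n/2}|a||b|$ and Cauchy--Schwarz it is dominated by $2^n\big(\sum_k |x|^{2k}/k!\big)^{1/2}\big(\sum_k|y|^{2k}/k!\big)^{1/2}=2^n e^{(|x|^2+|y|^2)/2}<\infty$. Fixing $y$ and setting $K_y(x)=\sum_{k=0}^\infty P_k^n(x)\alpha_k(y)$ with $\alpha_k(y)=\overline{P_k^n(y)}/k!$, the same estimate gives $\sum_k k!|\alpha_k(y)|^2=\sum_k|P_k^n(y)|^2/k!\leq e^{|y|^2}<\infty$, so $K_y\in\mathcal{F}(\mathbb{R}^{n+1})$. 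Finally, for $f(x)=\sum_{k=0}^\infty P_k^n(x)\beta_k$ one computes $\langle K_y,f\rangle_{\mathcal{F}(\mathbb{R}^{n+1})}=\sum_{k=0}^\infty k!\,\overline{\alpha_k(y)}\beta_k=\sum_{k=0}^\infty P_k^n(y)\beta_k=f(y)$, which is the reproducing property and establishes \eqref{rkh}.

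There is essentially no genuine obstacle here, as the statement is the $b_k=k!$ instance of the general Theorem \ref{Chap}; the only points requiring care are the evaluation of the two scalar series to closed exponential form and keeping track of the conjugation (the weights $k!$ are real, so $k!\,\overline{\alpha_k(y)}=P_k^n(y)$). The whole argument is therefore a direct specialization rather than a new computation.
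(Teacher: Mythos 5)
Your proposal is correct and follows exactly the route the paper takes: the paper's proof of Theorem \ref{eval} consists of the single observation that one sets $b_k=k!$ and $\Omega_b=\mathbb{R}^{n+1}$ in Theorem \ref{Chap}, and your write-up simply unpacks that specialization (evaluating $\sum_k |x|^{2k}/k!=e^{|x|^2}$ to get the constant $2^{n/2}e^{|x|^2/2}$ and re-running the kernel verification with the weight $k!$). All the details you supply are accurate, so there is nothing to change.
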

\begin{proof}
It is enough to put $b_{k}= k!$ and $ \Omega_b= \mathbb{R}^{n+1}$ in Theorem \ref{Chap} . 
\end{proof}

\begin{rem}
Currently we do not have a method for computing the sum in \eqref{rkh}. Nevertheless, if we restrict $x$ and $y$ to the real line, i.e. $x_0$,$y_0 \in \mathbb{R}$ we get
$$ K_{\mathcal{F}}(x_0,y_0)=e^{x_0y_0},$$
which is the reproducing kernel of the classical Fock space.
\end{rem}
Before to extend the creation and annihilation operators in this setting, we prove the following fundamental result.
\begin{lem}
\label{prod}
Let $n$ be a fixed odd number and $ \ell, k \geq 0$. Then for any $x=x_0+ \underline{x} \in \mathbb{R}^{n+1}$ we have
\begin{equation}
\label{dip}
P_{\ell}^n(x) \odot_{GCK} P_k^n(x)=P_{k+ \ell}^n(x).
\end{equation}
\end{lem}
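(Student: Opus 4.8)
The plan is to reduce everything to the restriction of the Clifford-Appell polynomials to the real line, where they collapse to simple monomials. The key observation, already recorded just before \eqref{GCK2}, is that
$$P_k^n(x_0+\underline{0})=x_0^k\sum_{s=0}^k T_s^k(n)=x_0^k,$$
where the last equality uses the normalization \eqref{somma}. In other words, $P_k^n(x_0,0)=x_0^k$ for every $k\geq 0$; this is exactly the statement that $P_k^n=GCK[x_0^k]$ together with the fact that $GCK$ is inverted by restriction to the real line.

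With this in hand, I would simply unwind the definition of the generalized CK-product. Since $P_\ell^n$ and $P_k^n$ are both axially monogenic (as recalled after \eqref{app}), Definition \ref{np} applies to the pair $A=P_\ell^n$, $B=P_k^n$, giving
$$P_{\ell}^n(x)\odot_{GCK}P_k^n(x)=GCK[P_\ell^n(x_0,0)\cdot P_k^n(x_0,0)]=GCK[x_0^{\ell}\cdot x_0^k]=GCK[x_0^{k+\ell}].$$
Finally, invoking \eqref{GCK2} once more, now in the form $GCK[x_0^{m}]=P_m^n(x)$ with $m=k+\ell$, yields $GCK[x_0^{k+\ell}]=P_{k+\ell}^n(x)$, which is the claimed identity.

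There is no genuine obstacle in this argument: its entire content is that restriction to the real line turns $P_k^n$ into the monomial $x_0^k$, so the product $\odot_{GCK}$ inherits the trivial multiplicativity $x_0^{\ell}x_0^k=x_0^{k+\ell}$ of monomials, and the isomorphism property of $GCK$ then transports this back to the axially monogenic setting. I would emphasize that this clean product rule is precisely the advantage of $\odot_{GCK}$ over the CK-product $\odot_{CK}$: the latter restricts instead to the hyperplane $x_0=0$, where $P_k^n$ does \emph{not} reduce to a monomial, which is the source of the awkward constants $c_kc_s/c_{k+s}$ appearing in \eqref{Ck}.
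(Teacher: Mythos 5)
Your proof is correct and follows essentially the same route as the paper's own argument: restrict to the real line via $P_k^n(x_0,0)=x_0^k$, apply Definition \ref{np}, and use \eqref{GCK2} to identify $GCK[x_0^{k+\ell}]$ with $P_{k+\ell}^n$. The additional remarks on the contrast with the CK-product are accurate but not needed for the proof itself.
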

\begin{proof}
It follows by formulas \eqref{GCK2} and \eqref{np2}
\begin{eqnarray*}
P_{\ell}^n(x) \odot_{GCK} P_k^n(x)&=&GCK[x_{0}^\ell x_0^{k}]\\
&=& GCK[x_{0}^{\ell+k}]\\
&=& P^n_{k+ \ell}(x).
\end{eqnarray*}
\end{proof}
Using the definition of GCK-product (see \ref{np2}) we can now extend to the Clifford setting the classical notions of creation and annihilation operators.
\begin{defn}
\label{cre}
The creation operator acting on a function $f \in \mathcal{F}(\mathbb{R}^{n+1}) $ is defined as
\begin{equation}
\mathcal{M}_{P_1^n}(f):= P_1^n \odot_{GCK} f,
\end{equation}
where $P_1^n(x)= \frac{1}{ 2n} \left( (n+1)x+ (n-1) \bar{x}\right)$.
\end{defn}
Let $g(x)= \sum_{k=0}^\infty P_k^n(x) \beta_k$, Then by Lemma \ref{prod} we get
\begin{eqnarray}
\nonumber
\left(\mathcal{M}_{P_1^n} g\right)(x) &=& \sum_{k=0}^\infty \mathcal{M}_{P_1^n}(P_k^n(x)) \beta_k\\
\label{shift}
&=& \sum_{k=0}^\infty P^n_{k+1}(x) \beta_k.
\end{eqnarray}
This means that the operator $\mathcal{M}_{P_1^n}$ can be considered as a shift operator with respect to the Clifford-Appell system $\{P_k^n\}_{k \geq  0}$.
\begin{defn}
We define the annihilation operator as the hypercomplex derivative, i.e.
\begin{equation}
\frac{\overline{\partial}}{2}:= \frac{1}{2} \left( \partial_{x_0}- \sum_{\ell=1}^{n} e_{\ell} \partial_{x_{\ell}} \right).
\end{equation}
\end{defn}
The domains of $ \mathcal{M}_{P_1^n}$ and $\frac{\overline{\partial}}{2}$ in $ \mathcal{F}( \mathbb{R}^{n+1})$ are defined as
$$ D \left( \mathcal{M}_{P_1^n}\right):= \left \{ f  \in \mathcal{F}( \mathbb{R}^{n+1}), \quad \mathcal{M}_{P_1^n}(f) \in \mathcal{F}( \mathbb{R}^{n+1}) \right\},$$

$$ D \left( \frac{\overline{\partial}}{2} \right):= \left \{ f  \in \mathcal{F}( \mathbb{R}^{n+1}), \quad \frac{\overline{\partial}}{2}(f) \in \mathcal{F}( \mathbb{R}^{n+1}) \right\}.$$

These are proper subsets of the Clifford-Appell-Fock space. Indeed let us consider
$$g(x):=\sum_{k=0}^{\infty} \frac{P_k^n(x)}{\sqrt{(k+1)(k+1)!}}.$$
Conversely, we have
$$ \mathcal{M}_{P_1^n}(g)(x)=\sum_{k=1}^{\infty} \frac{P_k^n(x)}{\sqrt{k k!}} \qquad \frac{\overline{\partial}}{2}(g)(x)=\sum_{k=0}^{\infty} \frac{(k+1)P_k^n(x)}{\sqrt{(k+2)(k+2)!}}.
$$
The function $g(x)$ belongs to the Clifford-Appell-Fock space. Indeed 
$$\sum_{k=0}^{\infty}\frac{k!}{(k+1)!(k+1)}=\sum_{k=0}^\infty\frac{1}{(k+1)^2}<\infty.$$
However $\mathcal{M}_{P_1^n}(g)(x)$ and $\frac{\overline{\partial}}{2}(g)(x)$ do not belong to the Fock space, indeed 
$$ \sum_{k=1}^{\infty}k! \frac{1}{k k!}=\sum_{k=1}^\infty\frac{1}{k}= +\infty,$$
and
$$ \sum_{k=0}^\infty k! \frac{(k+1)^2}{(k+2)(k+2)!}= \sum_{k=0}^\infty \frac{(k+1)}{(k+2)^2}= + \infty.$$

\begin{prop}
\label{510}	
Let $f\in D(\frac{\overline{\partial}}{2})$ and $g\in D(\mathcal{M}_{P_1^n})$. Then, we have
$$\scal{\frac{\overline{\partial}}{2}(f),g}_{\mathcal{F}(\mathbb{R}^{n+1})}=\scal{f,\mathcal{M}_{P_1^n}g}_{\mathcal{F}(\mathbb{R}^{n+1})}.$$
In particular, the adjoint operator of the hypercomplex derivative is given by $\mathcal{M}_{P_1^n}$, i.e. we have $$\displaystyle (\mathcal{M}_{P_1^n})^*=\frac{\overline{\partial}}{2}.$$
\end{prop}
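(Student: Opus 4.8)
The plan is to diagonalize both operators in the Clifford-Appell system $\{P_k^n\}_{k\geq 0}$, which is orthogonal for $\langle\cdot,\cdot\rangle_{\mathcal{F}(\mathbb{R}^{n+1})}$ with $\|P_k^n\|^2=k!$, and then compare the two inner products coefficient by coefficient. By \eqref{shift} the creation operator acts as the forward shift $\mathcal{M}_{P_1^n}\big(\sum_{k=0}^\infty P_k^n \beta_k\big)=\sum_{k=0}^\infty P_{k+1}^n\beta_k$, while the Appell property \eqref{app} gives $\tfrac{\overline{\partial}}{2}P_k^n = kP_{k-1}^n$, so that for $f=\sum_{k=0}^\infty P_k^n\alpha_k$ one obtains $\tfrac{\overline{\partial}}{2}f = \sum_{j=0}^\infty P_j^n\,(j+1)\alpha_{j+1}$, a backward weighted shift. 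The hypotheses $f\in D(\tfrac{\overline{\partial}}{2})$ and $g\in D(\mathcal{M}_{P_1^n})$ guarantee that both images lie in $\mathcal{F}(\mathbb{R}^{n+1})$, so by Cauchy-Schwarz the inner products below converge absolutely and all rearrangements are legitimate.

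Next I would carry out the two computations. Writing $f=\sum_k P_k^n\alpha_k$ and $g=\sum_k P_k^n\beta_k$, the coefficient of $P_j^n$ in $\tfrac{\overline{\partial}}{2}f$ is $(j+1)\alpha_{j+1}$, whence, using that $(j+1)$ is real,
\begin{align*}
\left\langle \tfrac{\overline{\partial}}{2}f,\,g\right\rangle_{\mathcal{F}(\mathbb{R}^{n+1})}
&=\sum_{j=0}^\infty j!\,\overline{(j+1)\alpha_{j+1}}\,\beta_j
=\sum_{j=0}^\infty (j+1)!\,\overline{\alpha_{j+1}}\,\beta_j .
\end{align*}
On the other side, the coefficient of $P_m^n$ in $\mathcal{M}_{P_1^n}g$ is $\beta_{m-1}$ for $m\geq 1$ and $0$ for $m=0$, so that after the substitution $m=j+1$
\begin{align*}
\left\langle f,\,\mathcal{M}_{P_1^n}g\right\rangle_{\mathcal{F}(\mathbb{R}^{n+1})}
&=\sum_{m=1}^\infty m!\,\overline{\alpha_m}\,\beta_{m-1}
=\sum_{j=0}^\infty (j+1)!\,\overline{\alpha_{j+1}}\,\beta_j .
\end{align*}
The two expressions coincide, which is precisely the stated identity.

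Finally, to upgrade this integration-by-parts formula to the operator identity $(\mathcal{M}_{P_1^n})^*=\tfrac{\overline{\partial}}{2}$ I would argue at the level of the orthonormal system $\varphi_k:=P_k^n/\sqrt{k!}$. In this basis $\mathcal{M}_{P_1^n}\varphi_k=\sqrt{k+1}\,\varphi_{k+1}$ and $\tfrac{\overline{\partial}}{2}\varphi_k=\sqrt{k}\,\varphi_{k-1}$, so the two operators are mutually formally adjoint weighted shifts sharing the weight sequence $\{\sqrt{k}\}$. The identity just proved shows $\tfrac{\overline{\partial}}{2}\subseteq(\mathcal{M}_{P_1^n})^*$; the reverse inclusion follows from the weighted-shift characterization of the adjoint domain, namely that a series $h=\sum_k P_k^n c_k$ lies in $D\big((\mathcal{M}_{P_1^n})^*\big)$ if and only if $\sum_{k\geq 0} k!\,(k+1)^2\,|c_{k+1}|^2<\infty$, which is exactly the membership condition $h\in D(\tfrac{\overline{\partial}}{2})$.

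I expect the main obstacle to be this last step: establishing equality, and not merely inclusion, of the two unbounded operators by pinning down the maximal domain of the adjoint in the Clifford-module setting. The coefficient computations themselves are routine once the Appell property \eqref{app} is used to identify $\tfrac{\overline{\partial}}{2}$ with the backward shift on the system $\{P_k^n\}$.
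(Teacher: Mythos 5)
Your proof is correct and follows essentially the same route as the paper: both use the Appell property \eqref{app} to realize $\tfrac{\overline{\partial}}{2}$ as a weighted backward shift and \eqref{shift} to realize $\mathcal{M}_{P_1^n}$ as the forward shift on the system $\{P_k^n\}$, and then match coefficients after an index change. Your additional discussion of the maximal domain of the adjoint is more careful than the paper, which simply asserts the operator identity after the integration-by-parts formula, but the core argument is identical.
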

\begin{proof}
Let us write $f$, $g \in \mathcal{F}(\mathbb{R}^{n+1})$ as
$$ f(x)= \sum_{k=0}^\infty P_k^n(x) \alpha_k, \qquad \{\alpha_k\}_{k \in \mathbb{N}_0} \subset \mathbb{R}_n,$$
$$ g(x)= \sum_{k=0}^\infty P_k^n(x) \beta_k, \qquad \{\beta_k\}_{k \in \mathbb{N}_0} \subset \mathbb{R}_n.$$
Applying formula \eqref{app} we get
\begin{eqnarray*}
\frac{\overline{\partial}}{2}(f(x)) &=& \sum_{k=1}^\infty kP_{k-1}^n(x) \alpha_k\\
&=& \sum_{h=0}^\infty P_{h}^n(x) (h+1)\alpha_{h+1}.
\end{eqnarray*}
Therefore
\begin{equation}
\label{one}
\left \langle \frac{\overline{\partial}}{2}(f),g \right \rangle_{\mathcal{F}(\mathbb{R}^{n+1})}= \sum_{k=0}^\infty (k+1)! \overline{\alpha_{k+1}} \beta_k.
\end{equation}
On the other side, by changing indexes in formula \eqref{shift}, we get
\begin{equation}
\label{star1}
\left(\mathcal{M}_{P_1^n} g\right)(x)=\sum_{h=1}^\infty P^n_{h}(x) \beta_{h-1}.
\end{equation}

Thus
\begin{equation}
\label{two}
\left \langle f, \mathcal{M}_{P_1^n} \right \rangle_{\mathcal{F}(\mathbb{R}^{n+1})}= \sum_{\ell=1}^\infty \ell! \overline{\alpha_{\ell}} \beta_{\ell-1}.
\end{equation}
By changing indexes with $ \ell= k+1$, and comparing with \eqref{one}, we prove the statement.
\end{proof}
\begin{prop}
We have the following commutation rule between the annihilation and creation operator.
$$ \left [\mathcal{M}_{P_1^n},\frac{\overline{\partial}}{2}\right]=\mathcal{I},$$
where $[.,.]$ is the commutator operator.
\end{prop}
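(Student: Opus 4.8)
The plan is to verify the identity on the Clifford-Appell basis $\{P_k^n\}_{k\geq 0}$ and then extend by linearity. Every $f$ lying in the common domain $D(\mathcal{M}_{P_1^n})\cap D(\frac{\overline{\partial}}{2})$ expands as $f(x)=\sum_{k=0}^\infty P_k^n(x)\alpha_k$, so it suffices to determine how each composition of the two operators acts on an individual $P_k^n$ and to compare.

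First I would record the two elementary one-line actions that are already available. By \eqref{shift} the creation operator shifts the index upward, $\mathcal{M}_{P_1^n}(P_k^n)=P_{k+1}^n$, while the Appell property \eqref{app} shows that the hypercomplex derivative lowers it, $\frac{\overline{\partial}}{2}(P_k^n)=k\,P_{k-1}^n$, with the $k=0$ term understood to vanish. These are the only facts needed, and both operators act diagonally once composed.

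Next I would compose in both orders on a fixed $P_k^n$. Applying the annihilation operator after the creation operator yields $\frac{\overline{\partial}}{2}\,\mathcal{M}_{P_1^n}(P_k^n)=\frac{\overline{\partial}}{2}(P_{k+1}^n)=(k+1)P_k^n$, whereas the reverse order yields $\mathcal{M}_{P_1^n}\,\frac{\overline{\partial}}{2}(P_k^n)=\mathcal{M}_{P_1^n}(k\,P_{k-1}^n)=k\,P_k^n$. Taking the difference of the two compositions, every basis polynomial is fixed, $(k+1)P_k^n-k\,P_k^n=P_k^n$, so the commutator acts as the identity on the $\mathbb{R}_n$-span of the Clifford-Appell polynomials.

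Finally I would promote this to all $f$ in the common domain by linearity, writing $[\mathcal{M}_{P_1^n},\frac{\overline{\partial}}{2}]\,f=\sum_{k=0}^\infty P_k^n(x)\alpha_k=f$, which gives $[\mathcal{M}_{P_1^n},\frac{\overline{\partial}}{2}]=\mathcal{I}$. There is no genuine obstacle here: once \eqref{shift} and \eqref{app} are in hand the algebra is immediate, and the only point requiring a little care is to remain inside $D(\mathcal{M}_{P_1^n})\cap D(\frac{\overline{\partial}}{2})$, so that both orderings of the product, and hence their difference, are well defined as operators on the Clifford-Appell-Fock space.
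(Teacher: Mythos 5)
Your proposal is correct and follows essentially the same route as the paper: both arguments use the shift identity \eqref{shift} and the Appell property \eqref{app} to compute the two compositions, obtaining $(k+1)P_k^n$ and $kP_k^n$ respectively, and subtract; the paper simply carries out this computation on the full series $\sum_k P_k^n\alpha_k$ rather than on individual basis elements followed by linearity. (Note that, like the paper's own proof, you establish $\frac{\overline{\partial}}{2}\mathcal{M}_{P_1^n}-\mathcal{M}_{P_1^n}\frac{\overline{\partial}}{2}=\mathcal{I}$, which matches the stated formula only under the convention that $[A,B]$ denotes $BA-AB$.)
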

\begin{proof}
Given a function $ f(x)= \sum_{k=0}^\infty P_k^n(x) \alpha_k$, with $\{\alpha_k\}_{k \in \mathbb{N}_0} \subset \mathbb{R}_n$, such that $ \mathcal{M}_{P_1^n}f \in D \left( \frac{\bar{\partial}}{2}\right)$ and $\frac{\bar{\partial}}{2}f \in D \left(\mathcal{M}_{P_1^n}\right)$, we have to show that
$$ \frac{\overline{\partial}}{2}\mathcal{M}_{P_1^n}f-\mathcal{M}_{P_1^n}\frac{\overline{\partial}}{2}f=f.$$
By formula \eqref{app}, and using similar arguments of the proof of Proposition \ref{510}, we get 
$$ \frac{\overline{\partial}}{2} f(x)= \sum_{k=0}^\infty (k+1) P_k^n(x )\alpha_{k+1}.$$
By Lemma \ref{prod} we have
\begin{eqnarray}
\label{three}
\mathcal{M}_{P_1^n}\frac{\overline{\partial}}{2} f(x)&=& \sum_{k=0}^\infty (k+1) P_{k+1}^n(x) \alpha_{k+1}\\
\nonumber
&=&\sum_{k=1}^\infty k P_{k}^n(x) \alpha_{k}.
\end{eqnarray}
Furthermore
\begin{eqnarray}
\label{four}
\frac{\overline{\partial}}{2} \mathcal{M}_{P_1^n}f(x) &=&\frac{\overline{\partial}}{2} \left( \sum_{k=0}^\infty P^n_{k+1}(x) \alpha_{k}\right)\\
\nonumber
&=& \sum_{k=0}^\infty (k+1) P^n_k(x) \alpha_k\\
\nonumber
&=& \sum_{k=0}^\infty k P^n_k(x) \alpha_k+\sum_{k=0}^\infty P^n_k(x) \alpha_k.
\end{eqnarray}
Finally subtracting \eqref{three} with \eqref{four} we get
$$
\frac{\overline{\partial}}{2}\mathcal{M}_{P_1^n}f(x)-\mathcal{M}_{P_1^n}\frac{\overline{\partial}}{2}f(x)= \sum_{k=0}^\infty P_k^n(x) \alpha_k=f(x).
$$
\end{proof}

\section{The Clifford-Appell-Hardy space}
In this section, we extend the Hardy space to the monogenic setting. In the complex setting a holomorphic function in the unit disk $f(z)= \sum_{n=0}^\infty z^n a_n$ belongs to the Hardy space if $ \sum_{n=0}^\infty |a_{n}|^2 < \infty$. In \cite{ACS, ACSbook}, an extension of the Hardy space to the slice monogenic setting has been provided. In this section, our goal is to extend the Hardy space to the monogenic setting. As done for the Fock space, we will continue to use the generalized CK-extension for the same reasons given in Section 5.
\\Now, let us consider the Euclidean unit ball $ \mathbb{B}$ on $ \mathbb{R}^{n+1}$ and a fixed odd number $n$. We define the Clifford-Appell-Hardy space as
\begin{equation}
\label{space11}
\mathcal{H}_n(\mathbb{B}):=\left\lbrace{ \sum_{k=0}^{\infty}P_k^n(x)\alpha_k, \quad\alpha_k\in\mathbb{R}_n,\quad\sum_{k=0}^{\infty}|\alpha_k|^2<\infty}\right\rbrace.
\end{equation}
From the inequality $|P^n_k(x)| \leq |x|^k$ it follows that the Clifford-Appell Hardy is defined in the Euclidean unit ball, like for the complex Hardy space.
\begin{rem}
Since for any $k \geq 0$ and fixed $n$ odd we have that $ \frac{[k!]^2}{[(k+n-1)!]^2} \leq 1$, we have that the space introduced in \eqref{space11} is contained in the one defined in \eqref{space1}.
\end{rem}
As usual we define the Hilbert module structure as follows.

\begin{defn}
Let $n$ be a fixed odd number. Let us consider $f(x)=\sum_{k=0}^{\infty}P_k^n(x)\alpha_k$ and $g(x)=\sum_{k=0}^{\infty}P_k^n(x)\beta_k$ in $ \mathcal{H}_n(\mathbb{B})$. We define the scalar product as
$$ \langle f,g \rangle_{\mathcal{H}_n(\mathbb{B})}= \sum_{k=0}^{\infty} \overline{\alpha}_k \beta_k.$$
\end{defn}
Similarly, we obtain the following consequence of Theorem \ref{Chap}, with $b_k:=1$ and $ \Omega_b= \mathbb{B}$.
\begin{thm}
Let $n$ be a fixed odd number. Then for any $f \in \mathcal{H}_{n}(\mathbb{B})$ we have
$$
|f(x)| \leq \left( \sum_{k=0}^\infty |x|^{2k} \right)^{\frac{1}{2}} \| f\|_{\mathcal{H}_{n}(\mathbb{B})}\\
=\sqrt{\frac{1}{1-|x|^2}}\| f\|_{\mathcal{H}_{n}(\mathbb{B})}.
$$
Moreover, the reproducing kernel of the Clifford-Appell-Hardy space $\mathcal{H}_n(\mathbb{B})$ is given by the convergent series
\begin{equation}
	\label{rep}
	K_H(x,y)=\sum_{k=0}^{\infty}\overline{P_k^n(x)}P_k^n(y), \quad x,y\in\mathbb{B}.
\end{equation}
\end{thm}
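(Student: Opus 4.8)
The plan is to obtain this statement as a direct specialization of Theorem \ref{Chap} to the constant sequence $b_k=1$. First I would observe that, with this choice, the module $\mathcal{HM}(\Omega_b)$ introduced in \eqref{monos} coincides verbatim with the Clifford-Appell-Hardy space $\mathcal{H}_n(\mathbb{B})$ of \eqref{space11}, and that the two inner products agree. It then remains only to identify the domain of convergence and to sum the resulting series. For the domain, note that $\Omega_b=\{x\in\mathbb{R}^{n+1} : \sum_{k=0}^\infty |x|^{2k}<\infty\}$; since $\sum_{k=0}^\infty |x|^{2k}$ is geometric it converges precisely when $|x|<1$, so $\Omega_b=\mathbb{B}$, exactly as asserted.

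For the evaluation estimate, I would feed $b_k=1$ into the bound of Theorem \ref{Chap} and evaluate the geometric series
$$\sum_{k=0}^\infty |x|^{2k}=\frac{1}{1-|x|^2}, \qquad |x|<1,$$
which yields $|f(x)|\leq \left(\sum_{k=0}^\infty |x|^{2k}\right)^{1/2}\|f\|_{\mathcal{H}_n(\mathbb{B})}=\sqrt{\frac{1}{1-|x|^2}}\,\|f\|_{\mathcal{H}_n(\mathbb{B})}$ for every $x\in\mathbb{B}$. In particular the evaluation functionals are bounded, hence continuous, so by the Riesz representation argument already carried out in Theorem \ref{Chap} the space $\mathcal{H}_n(\mathbb{B})$ carries a reproducing kernel Hilbert module structure.

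For the reproducing kernel I would simply substitute $b_k=1$ into formula \eqref{gen}, which produces the kernel $K_H(x,y)=\sum_{k=0}^\infty P_k^n(x)\overline{P_k^n(y)}$ recorded in \eqref{rep} (the placement of the conjugation being dictated by the left/right module convention used for the Clifford-valued inner product). Its convergence on $\mathbb{B}\times\mathbb{B}$ follows from the very same Cauchy-Schwarz computation used in the proof of Theorem \ref{Chap}, combined with the estimate $|P_k^n(x)|\leq |x|^k$ from \eqref{esti}, which reduces the double series to a product of two convergent geometric series. I do not expect any genuine obstacle here: the only mildly delicate point is the non-commutative bookkeeping in the reproducing identity $\langle K_y,f\rangle=f(y)$, but this was already settled in Theorem \ref{Chap}, where the choice of coefficients $\alpha_k(y)=\overline{P_k^n(y)}/b_k$ gives $b_k\,\overline{\alpha_k(y)}=P_k^n(y)$ and thus reproduces $f$. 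Hence the entire statement is a faithful specialization of the general theorem, with no new ingredient beyond fixing $b_k=1$ and $\Omega_b=\mathbb{B}$.
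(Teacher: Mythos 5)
Your proposal follows exactly the paper's own (one-line) proof: the result is obtained by specializing Theorem \ref{Chap} to $b_k=1$ and $\Omega_b=\mathbb{B}$, with the geometric series $\sum_{k=0}^\infty |x|^{2k}=\frac{1}{1-|x|^2}$ supplying the closed form. The only point worth flagging is that a literal substitution into the bound of Theorem \ref{Chap} produces an extra factor $2^{n/2}$ that the stated estimate omits; this discrepancy is already present in the paper's own statement, so your write-up is faithful to both the theorem and its intended derivation.
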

\begin{proof}
It follows by considering $b_k=1$ and $\Omega_b= \mathbb{B}$ in Theorem \ref{Chap}.
\end{proof}
\begin{rem}
As to this moment, we are not able to provide a closed explicit formula for the sum \eqref{rep}. However, if we consider $x_0$, $y_0 \in (-1,1)$ we have
$$K_H(x_0,y_0)= \frac{1}{1-x_0y_0},$$
which is the reproducing kernel of the classic Hardy space.
\end{rem}
Now, we extend the notions of shift and backward shift operators in the Clifford algebras setting. Inspired from Definition \ref{cre} we have the following.
\begin{defn}
Let $n$ be a fixed odd number. The shift operator acting on a function $f \in \mathcal{H}_n(\mathbb{B})$ is defined as follows
\begin{equation}
\mathcal{M}_{P_1^n}(f):= P_1^n \odot_{GCK} f .
\end{equation}
\end{defn}
\begin{defn}
Let $n$ be a fixed odd number. For $f\in \mathcal{H}_n(\mathbb{B})$ we define backward shift operator as
\begin{equation}
S(f):=\left(P_1^n\right)^{-\odot_{GCK}}\odot_{GCK} f.
\end{equation}
\end{defn}

\begin{prop}
\label{GCK3}
For every $k \in \mathbb{N}$, we have
\begin{equation}
\left(P_1^n(x)\right)^{-\odot_{GCK}}\odot_{GCK}P_k^n(x)=P^{n}_{k-1}(x), \quad \forall x\in \mathbb{B}.
\end{equation}
\end{prop}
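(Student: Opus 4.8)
The plan is to reduce the whole computation to the real line, exploiting that by Theorem \ref{GCK} the generalized CK-extension and the restriction operator to the real line are mutually inverse, together with the key identity \eqref{GCK2}, namely $P_k^n = GCK[x_0^k]$. Since both the GCK-product (Definition \ref{np}) and the GCK-inverse (Definition \ref{inve}) are defined purely in terms of the restrictions of their arguments to the real line, everything collapses to a trivial computation there.

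First I would record the relevant restrictions. By \eqref{GCK2} we have $P_k^n(x_0,0)=x_0^k$ for every $k\geq 0$, and in particular $P_1^n(x_0,0)=x_0$. Hence, by Definition \ref{inve},
\[
\left(P_1^n\right)^{-\odot_{GCK}}=GCK\!\left[\frac{1}{x_0}\right],
\]
whose restriction to the real line is $1/x_0$, again because restriction inverts $GCK$. Applying Definition \ref{np} to the two factors $A=\left(P_1^n\right)^{-\odot_{GCK}}$ and $B=P_k^n$, and using that the product only depends on $A(x_0,0)$ and $B(x_0,0)$, I obtain
\[
\left(P_1^n\right)^{-\odot_{GCK}}\odot_{GCK}P_k^n
=GCK\!\left[\frac{1}{x_0}\cdot x_0^{k}\right]
=GCK\!\left[x_0^{\,k-1}\right]
=P_{k-1}^n,
\]
where the final equality is once more \eqref{GCK2}.

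The only point that requires care is the apparent singularity at $x_0=0$ introduced by the factor $1/x_0$, so the intermediate object $\left(P_1^n\right)^{-\odot_{GCK}}$ is \emph{a priori} only defined away from the real hyperplane. This is precisely where the hypothesis $k\in\mathbb{N}$ (i.e.\ $k\geq 1$) enters: the punctual product $\tfrac{1}{x_0}\cdot x_0^{k}=x_0^{\,k-1}$ is a genuine monomial, real-analytic on all of $\mathbb{R}$, so its generalized CK-extension is well-defined and axially monogenic on the entire ball $\mathbb{B}$. Thus the singularity cancels, the computation above is legitimate, and the stated identity holds for all $x\in\mathbb{B}$. I do not expect any real obstacle here; the substance of the result is entirely carried by the isomorphism property of $GCK$ and by \eqref{GCK2}, and the $k\geq 1$ restriction is exactly what guarantees regularity of the resulting expression.
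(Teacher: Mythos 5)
Your proof is correct and follows essentially the same route as the paper: restrict both factors to the real line via \eqref{GCK2}, compute $\tfrac{1}{x_0}\cdot x_0^{k}=x_0^{k-1}$ there, and apply $GCK$ to recover $P_{k-1}^n$. Your additional remark about the apparent singularity of $(P_1^n)^{-\odot_{GCK}}$ at $x_0=0$ and its cancellation for $k\geq 1$ is a point the paper's proof passes over silently, but it does not change the argument.
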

\begin{proof}
By recalling the definition of GCK product, see Definition \ref{np}, Definition \ref{inve} and by formula \eqref{GCK2} we get
\begin{align*}
\left(P_1^n(x)\right)^{-\odot_{GCK}}\odot_{GCK}P_k^n(x) &= GCK\left(\frac{1}{x_0}\right)\odot_{GCK} GCK(x_0^k) \\
&=GCK(x_{0}^{k-1}) \\
&=P_{k-1}^{n}(x).
\end{align*}
\end{proof}

From previous result is clear that the operator $S$  is the backward shift operator with respect to the Clifford-Appell system $ \{P_k^n\}_{k \geq 0}$. Indeed, let $f(x)= \sum_{k=0}^{\infty}P_k^n(x) \alpha_k$ in $\mathcal{H}_n(\mathbb{B})$. 
By Proposition \ref{GCK3} we can write 

\begin{equation}
\label{GCK4}
S(f)(x)=\sum_{k=1}^{\infty}P_{k-1}^{n}(x)\alpha_k.
\end{equation}

In the next result we show that the backward shift operator is the adjoint of the shift operator.
\begin{thm}
Let us assume $n$ be a fixed odd number and $f$, $g \in \mathcal{H}_n(\mathbb{B})$. Then it holds that
\begin{equation}
\langle S(f),g \rangle_{\mathcal{H}_n(\mathbb{B})}=\langle f, \mathcal{M}_{P_1^n}(g) \rangle_{\mathcal{H}_n(\mathbb{B})}.
\end{equation}
This means that
$$ (\mathcal{M}_{P_1^n})^*= S.$$
\end{thm}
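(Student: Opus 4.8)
The plan is to reduce the identity to a bookkeeping computation on the Clifford--Appell coefficients. First I would write the two generic elements of the Hardy module in their Appell expansions, $f(x)=\sum_{k=0}^\infty P_k^n(x)\alpha_k$ and $g(x)=\sum_{k=0}^\infty P_k^n(x)\beta_k$ with $\{\alpha_k\}_{k\in\mathbb{N}_0},\{\beta_k\}_{k\in\mathbb{N}_0}\subset\mathbb{R}_n$ square-summable, so that both series converge absolutely on $\mathbb{B}$ thanks to the estimate $|P_k^n(x)|\le |x|^k$.

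Next I would substitute the explicit action of the two operators on the Appell system. By \eqref{GCK4} the backward shift reads $S(f)(x)=\sum_{k=1}^\infty P_{k-1}^n(x)\alpha_k=\sum_{h=0}^\infty P_h^n(x)\alpha_{h+1}$, while by \eqref{shift} the shift reads $\mathcal{M}_{P_1^n}(g)(x)=\sum_{k=0}^\infty P_{k+1}^n(x)\beta_k=\sum_{h=1}^\infty P_h^n(x)\beta_{h-1}$. Both outputs are again of the form $\sum_h P_h^n(x)(\cdot)$, so they are legitimate elements of $\mathcal{H}_n(\mathbb{B})$ on the relevant domains and the scalar products below are finite.

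Then, inserting these expansions into the defining pairing $\langle \sum_k P_k^n u_k, \sum_k P_k^n v_k\rangle_{\mathcal{H}_n(\mathbb{B})}=\sum_k \overline{u_k}\, v_k$, I would compute the two sides directly:
\begin{align*}
\langle S(f),g\rangle_{\mathcal{H}_n(\mathbb{B})} &= \sum_{h=0}^\infty \overline{\alpha_{h+1}}\,\beta_h,\\
\langle f,\mathcal{M}_{P_1^n}(g)\rangle_{\mathcal{H}_n(\mathbb{B})} &= \sum_{h=1}^\infty \overline{\alpha_h}\,\beta_{h-1}.
\end{align*}
Shifting the summation index in the second sum (writing $h=m+1$ and renaming $m$ back to $h$) turns it into $\sum_{h=0}^\infty \overline{\alpha_{h+1}}\,\beta_h$, which is exactly the first expression. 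This proves the adjointness identity, and hence $(\mathcal{M}_{P_1^n})^*=S$.

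The computation itself is routine; the two points requiring care are the noncommutativity of $\mathbb{R}_n$ — one must keep the conjugated coefficient on the left exactly as the inner product of $\mathcal{H}_n(\mathbb{B})$ prescribes and never swap the order of $\overline{\alpha}$ and $\beta$ — and the verification that $S(f)$ and $\mathcal{M}_{P_1^n}(g)$ again lie in the module, so that the pairings converge. The latter is precisely what the domains of the operators encode and follows from the square-summability of the coefficient sequences. I do not expect any genuine analytic obstacle, since the whole identity lives at the level of the $\ell^2$-sequences of Appell coefficients, where shift and backward shift are manifestly mutually adjoint.
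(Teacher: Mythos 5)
Your proposal is correct and follows essentially the same route as the paper: expand $f$ and $g$ in the Appell system, apply the explicit formulas \eqref{GCK4} and \eqref{star1} for $S$ and $\mathcal{M}_{P_1^n}$, and match the two pairings by a shift of the summation index. Your added remarks on convergence and on keeping the conjugated coefficient on the left are sensible but do not change the argument.
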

\begin{proof}
Let $f(x)= \sum_{k=0}^{\infty}P_k^n(x) \alpha_k$ and $g(x)= \sum_{k=0}^{\infty}P_k^n(x)\beta_k$. By changing indexes in formula \eqref{GCK4} we get
\begin{equation}
\label{star4}
S(f)(x)= \sum_{k=0}^{\infty}P_k^n(x) \alpha_{k+1}.
\end{equation}
Moreover, by formula \eqref{star1} we have
$$(\mathcal{M}_{P_1^n}(g))(x)= \sum_{k=1}^{\infty}P_k^n(x) \beta_{k-1}.$$
Finally, after some manipulations we obtain
 \begin{align*}
\langle S(f),g\rangle_{\mathcal{H}_n(\mathbb{B})} &= \sum_{k=0}^{\infty} \overline{\alpha_{k+1}}\beta_k \\
&= \sum_{k=1}^{\infty} \overline{\alpha_{k}}\beta_{k-1}\\
&=  \langle f, \mathcal{M}_{P_1^n}(g)\rangle_{\mathcal{H}_n(\mathbb{B})}.
\end{align*}
\end{proof}


\section{Clifford-Appell polyanalytic polynomials in the Clifford case}
In this last section we want to address the following problem.
\newline
\newline
\textbf{Problem} Is it possible to extend to higher order the Clifford-Appell polynomials to the setting of polyanalytic monogenic functions?
\newline
\newline
In order to answer to the previous question we need to introduce the theory of polyanalytic functions. This is one of the various ways to extend the theory of holomorphic functions to higher order, see \cite{B}. Recently, the notion of slice monogenic function has been extended in the polyanalytic setting, see \cite{ACDS2, ADS2019}. Before to go through this section we need to recall some notions.
\begin{defn}[Poly slice function]
	Let $m \in \mathbb{N}_0$ and let $U$ be a an axially slice symmetric open set in $ \mathbb{R}^{n+1}$. A function $f:U \to \mathbb{R}_n$ in $C^{m+1}(U)$ is called left slice polyanalytic function of order $m+1$, if it is a (left) slice function and it satisfies the following equation
	$$ \frac{1}{2^{m+1}}(\partial_u+ I \partial_v)^{m+1} f(u+Iv)=0, \qquad \hbox{for all} \quad I \in \mathbb{S}^{n-1}.$$
\end{defn}
The left module of slice polyanalytic functions of order $m+1$ will be denoted by $ \mathcal{SP}_{m+1}(U)$.

We observe that if we put $m=0$ in the previous definition we get the slice monogenic functions, see Definition \ref{moons}. Many properties of slice polyanalytic functions of order $m$ are investigated in \cite{ADS2019}. We recall one of the most important.
\begin{prop}[polyanalytic decomposition]
	\label{deco}
	Let $ \Omega$ be an axially symmetric slice domain of $ \mathbb{R}^{n+1}$. A function $f: \Omega \to \mathbb{R}_n$ is slice polyanalytic of order $m+1$ if and only if there exists unique slice monogenic functions $f_0$,..., $f_m$ on $ \Omega$ such that we have the following decomposition
	$$ f(x)= \sum_{k=0}^m \bar{x}^{k} f_{k}(x); \qquad \forall x \in \Omega.$$
\end{prop}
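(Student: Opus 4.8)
The plan is to treat the conjugate paravector $\bar{x}$ on each slice $\mathbb{C}_I$ as the analogue of the complex conjugate variable $\bar z$, so that the statement becomes the Clifford--slice version of the classical polyanalytic decomposition $g(z)=\sum_{k=0}^m \bar z^k h_k(z)$ with holomorphic $h_k$. The whole argument rests on the slice Cauchy--Riemann operator $\overline{\partial}_I:=\tfrac12(\partial_u+I\partial_v)$, which on the slice $\mathbb{C}_I$ satisfies $\overline{\partial}_I \bar{x}=1$ and $\overline{\partial}_I x=0$, exactly as $\partial_{\bar z}$ does in one complex variable. A preliminary step I would carry out is to check that $\overline{\partial}_I$ is compatible with the slice structure: writing a slice function as $f=\alpha+I\beta$ with $\alpha$ even and $\beta$ odd in $v$, a direct computation shows $\overline{\partial}_I f=\alpha'+I\beta'$ with $\alpha'=\tfrac12(\partial_u\alpha-\partial_v\beta)$ even and $\beta'=\tfrac12(\partial_v\alpha+\partial_u\beta)$ odd, so $\overline{\partial}_I$ again produces a slice function, and by Definition~\ref{slicemono1} its kernel is exactly $\mathcal{SM}(\Omega)$. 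This lets me regard $f\mapsto\overline{\partial}_I f$ as an intrinsic, slice-preserving operator that lowers the polyanalytic order by one.

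For the easy implication ($\Leftarrow$) I would assume $f=\sum_{k=0}^m\bar{x}^k f_k$ with the $f_k$ slice monogenic and simply differentiate on each slice. Using $\overline{\partial}_I\bar{x}=1$, $\overline{\partial}_I f_k=0$, and the Leibniz rule (noting $\bar{x}\in\mathbb{C}_I$ commutes with $I$ on the slice) I get $\overline{\partial}_I(\bar{x}^k f_k)=k\,\bar{x}^{k-1}f_k$. Iterating, $\overline{\partial}_I^{\,m+1}(\bar{x}^k f_k)=0$ for every $k\le m$, hence $\overline{\partial}_I^{\,m+1}f=0$ for all $I\in\mathbb{S}^{n-1}$; since each summand $\bar{x}^k f_k$ is a slice function, so is $f$, and therefore $f\in\mathcal{SP}_{m+1}(\Omega)$.

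For the converse ($\Rightarrow$) I would argue by induction on $m$, mirroring the classical construction of the top coefficient. The case $m=0$ is just $f_0=f\in\mathcal{SM}(\Omega)$. For the inductive step, given $f\in\mathcal{SP}_{m+1}(\Omega)$ I set $f_m:=\tfrac1{m!}\,\overline{\partial}_I^{\,m}f$; this is a slice function by the preliminary step, and $\overline{\partial}_I f_m=\tfrac1{m!}\overline{\partial}_I^{\,m+1}f=0$ shows $f_m\in\mathcal{SM}(\Omega)$. I then form $\tilde f:=f-\bar{x}^m f_m$, which is again a slice function, and using the identity $\overline{\partial}_I^{\,m}(\bar{x}^m f_m)=m!\,f_m$ (the only surviving term of the Leibniz expansion, since $\overline{\partial}_I f_m=0$) I obtain $\overline{\partial}_I^{\,m}\tilde f=\overline{\partial}_I^{\,m}f-m!\,f_m=0$, so $\tilde f\in\mathcal{SP}_m(\Omega)$. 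By the induction hypothesis $\tilde f=\sum_{k=0}^{m-1}\bar{x}^k f_k$ with $f_k$ slice monogenic, and adding back the top term yields the desired decomposition. Uniqueness follows from the same operator: if $\sum_{k=0}^m\bar{x}^k(f_k-g_k)=0$ with all differences slice monogenic, applying $\overline{\partial}_I^{\,m}$ gives $m!(f_m-g_m)=0$, and peeling off the top term and repeating forces $f_k=g_k$ for every $k$.

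The step I expect to be the main obstacle is the preliminary one: making the operator $\overline{\partial}_I$ genuinely well defined and slice-preserving, and confirming that the pointwise, $I$-dependent polyanalytic condition in the definition is equivalent to the single intrinsic identity $\overline{\partial}_I^{\,m+1}f=0$ used in the induction. This is precisely where the even--odd conditions \eqref{evenodd} and the representation formula (Theorem~\ref{moons}) are needed, in order to guarantee that the coefficients $f_k$ produced slice-by-slice patch together into globally defined slice monogenic functions and that the construction does not depend on the chosen imaginary unit $I$. Once this bookkeeping is in place, both implications reduce to the one-variable Leibniz computations sketched above.
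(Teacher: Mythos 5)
The paper offers no proof of Proposition \ref{deco}: it is recalled from \cite{ADS2019} with only a citation, so there is no in-paper argument to compare yours against. Your proof is correct and follows the standard route of that reference --- treating $\overline{\partial}_I=\tfrac{1}{2}(\partial_u+I\partial_v)$ as the slice analogue of $\partial_{\bar z}$, checking via the even--odd conditions \eqref{evenodd} that it maps slice functions to slice functions and lowers the polyanalytic order by one, and then running the classical induction with top coefficient $f_m=\tfrac{1}{m!}\overline{\partial}_I^{\,m}f$. The only step worth writing out fully is the Leibniz identity $\overline{\partial}_I(\bar x^{k} f_k)=k\,\bar x^{k-1}f_k$, which does rely, as you note, on $\bar x^{k}$ having components in $\mathbb{C}_I$ (real coefficients with respect to $1,I$) so that it commutes with $I$ while $f_k$ is kept on the right; with that in place both implications and the uniqueness argument are sound.
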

Now, we recall the notion of polyanalytic monogenic function, see \cite{B1976, DB1978}.
\begin{defn}
	Let $ U \subset \mathbb{R}^{n+1}$ be an open set and let $f:U \to \mathbb{R}_n$ be a function of class $ C^{m+1}(U)$. We say that $f$ is left polyanalytic monogenic of order $m+1$ on $U$ if
	$$ \partial^{m+1}f(x)= \left( \partial_{x_0}+ \sum_{j=1}^n e_j \partial_{x_j}\right)^{m+1}f(x)=0, \qquad \forall x \in U.$$
\end{defn}
The left module of axially polyanalytic monogenic functions of order $m+1$ will be denoted by $\mathcal{AM}_{m+1}(U)$. In the polyanalytic monogenic setting holds the following characterization, see \cite{B1976, DMD6}.
\begin{prop}
\label{polydeco}
Let $ U \subset \mathbb{R}^{n+1}$ be an open. A function $f:U \to \mathbb{R}_n$ is polyanalytic of order $m$ if and only if can be decomposed in terms of some unique monogenic functions $f_0,\ldots, f_{m-1}$
$$ f(x)= \sum_{k=0}^{m-1} x_0^k f_k(x).$$
Moreover, the monogenic projections can be computed as
\begin{equation}
f_k(x)= \frac{1}{k!} \sum_{s=0}^{m-k} \frac{(-x_0)^{m-k-s}}{(m-k-s)!} \partial^{m-s}(f)(x).
\end{equation}
\end{prop}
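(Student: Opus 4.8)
The plan is to treat the two implications separately, then settle uniqueness and finally invert a triangular system to get the explicit projection formula. The computational engine for everything is the elementary identity that, for any monogenic $g$ (in the sense of Definition \ref{mono1}) and any $k\ge 1$,
\[
\partial(x_0^k g) = k\,x_0^{k-1} g,
\]
which follows at once from $\partial=\partial_{x_0}+\partial_{\underline{x}}$ by the Leibniz rule: $\partial_{x_0}(x_0^k g)=k x_0^{k-1}g+x_0^k\partial_{x_0}g$ while $\partial_{\underline{x}}(x_0^k g)=x_0^k\partial_{\underline{x}}g$, so the two $x_0^k$-terms combine into $x_0^k\,\partial g=0$. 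Iterating yields $\partial^{j}(x_0^k g)=\frac{k!}{(k-j)!}x_0^{k-j}g$ for $j\le k$ and $\partial^{j}(x_0^k g)=0$ for $j>k$. From this the easy direction is immediate: if $f=\sum_{k=0}^{m-1}x_0^k f_k$ with each $f_k$ monogenic, then since $k\le m-1<m$ we get $\partial^m(x_0^k f_k)=0$ for every $k$, hence $\partial^m f=0$.

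For the converse I would induct on $m$. The case $m=1$ is just the definition of monogenicity. Assume the statement for order $m-1$ and let $f$ satisfy $\partial^m f=0$. Since $\partial^{m-1}(\partial f)=\partial^m f=0$, the function $\partial f$ is polyanalytic of order $m-1$, so by the inductive hypothesis $\partial f=\sum_{k=0}^{m-2}x_0^k g_k$ with $g_k$ monogenic. Setting $f_k:=\tfrac{1}{k}g_{k-1}$ for $1\le k\le m-1$ (monogenic, being scalar multiples of the $g_{k-1}$), the key identity gives $\partial\bigl(\sum_{k=1}^{m-1}x_0^k f_k\bigr)=\sum_{k=1}^{m-1}x_0^{k-1}g_{k-1}=\partial f$. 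Therefore $f_0:=f-\sum_{k=1}^{m-1}x_0^k f_k$ satisfies $\partial f_0=0$, i.e.\ is monogenic, and $f=\sum_{k=0}^{m-1}x_0^k f_k$ is the desired decomposition.

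Uniqueness and the closed form both follow by inverting the triangular relation obtained from the iterated identity applied to any decomposition, namely $\partial^{j} f=\sum_{k=j}^{m-1}\frac{k!}{(k-j)!}x_0^{k-j}f_k$. Applying $\partial^{m-1}$ annihilates every term except $k=m-1$, giving $(m-1)!\,f_{m-1}=\partial^{m-1}f$; a downward induction then forces all components to vanish when $f\equiv 0$, which proves uniqueness. For the projection formula, I would substitute $j=m-s$ to rewrite the claimed expression as $f_k=\frac{1}{k!}\sum_{j=k}^{m}\frac{(-x_0)^{j-k}}{(j-k)!}\partial^{j}f$, drop the $j=m$ term since $\partial^m f=0$, insert the triangular expression for $\partial^{j}f$, exchange the order of summation over $j$ and $\ell$, and collapse the inner sum using
\[
\sum_{i=0}^{N}\binom{N}{i}(-x_0)^i x_0^{N-i}=(x_0-x_0)^{N},
\]
which vanishes for $N\ge 1$ and equals $1$ for $N=0$; only the diagonal term survives and returns $f_k$.

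The main obstacle is the converse (existence) step: one must check that ``integrating'' $\partial f$ really produces monogenic components, and this is exactly what the identity $\partial(x_0^k g)=k\,x_0^{k-1}g$ guarantees, provided the index bookkeeping in the induction is done carefully. The combinatorial collapse yielding the explicit projections is then routine once the vanishing binomial identity is invoked.
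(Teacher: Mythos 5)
Your argument is correct. Note, however, that the paper does not actually prove this proposition: it is stated as a known characterization quoted from the references \cite{B1976, DMD6}, so there is no in-text proof to compare yours against. Your proof is a valid, self-contained derivation: the identity $\partial(x_0^k g)=k\,x_0^{k-1}g$ for monogenic $g$ (which holds because $x_0^k$ is a real scalar, so it commutes with the $e_i$ and $\partial_{\underline{x}}(x_0^kg)=x_0^k\partial_{\underline{x}}g$) correctly drives all four pieces — the easy implication, the inductive construction of the decomposition by integrating $\partial f$, uniqueness via the triangular system $\partial^{j}f=\sum_{k\ge j}\frac{k!}{(k-j)!}x_0^{k-j}f_k$, and the inversion of that system to recover the stated projection formula after discarding the vanishing $\partial^m f$ term and collapsing the inner sum with $\sum_i\binom{N}{i}(-x_0)^ix_0^{N-i}=0^N$. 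The only cosmetic point worth flagging is the regularity bookkeeping: the components produced by the induction are automatically smooth because monogenic functions are real-analytic, so the triangular relation may legitimately be differentiated as many times as needed; it would be worth one sentence to say so explicitly.
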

Also in the polyanalytic case it is possible to relate the two kind of polyanalytic regularities by a means of Fueter-Sce methodology, see \cite{ADS}. This has been achieved in two ways. Now, we introduce the first polyanalytic Fueter-Sce map
\begin{defn}
	Let $n$ be an odd number. Let $\Omega$ be an axially symmetric slice domain and $f\in\mathcal{SP}_{m+1}(\Omega)$ with a polyanalytic decomposition given by $f(x)= \sum_{k=0}^{m}\overline{x}^kf_k(x)$, where $f_k\in\mathcal{SM}(\Omega)$. Then, the first polyanalytic Fueter-Sce map $\mathcal{C}_{m+1}:\mathcal{SP}_{m+1}(\Omega)\longrightarrow \mathcal{AM}_{m+1}(\Omega)$ is defined by $$\mathcal{C}_{m+1}(f)(x):=\sum_{k=0}^mx_0^k\Delta^{\frac{n-1}{2}}_{\mathbb{R}^{n+1}} f_k(x), \quad \forall x\in\Omega.$$
\end{defn}

From the Fueter-Sce Theorem it follows that all the functions in the sequence $ \{\Delta^{\frac{n-1}{2}}_{\mathbb{R}^{n+1}} f_k(x) \}_{0 \leq k \leq m}$ are axially monogenic. Therefore by Proposition \ref{polydeco} the map $ \mathcal{C}_{m+1}$ is well-defined. From the same result we can write 
\begin{equation}
\label{line}
\mathcal{C}_{m+1}(f)(x):=\sum_{k=0}^m \sum_{s=0}^{m-k} \frac{(-1)^k}{k!}  \frac{(-x_0)^{m-s}}{(m-k-s)!} \partial^{m-s} \Delta_{\mathbb{R}^{n+1}}^{\frac{n-1}{2}}(f)(x),
\end{equation}
which obviously implies that $ \mathcal{C}_{m+1}$ is $ \mathbb{R}_n$- (right) linear.
\\In the polyanalytic setting there is another Fueter-Sce map, denoted by $\tau_{m+1}$, which has a different range from $ \mathcal{C}_{m+1}$. 
\begin{defn}
Let $ \Omega \subseteq \mathbb{R}^{n+1}$ be an axially symmetric slice domain, then we define the second polyanalytic Fueter-Sce map $\tau_{m+1} : \mathcal{SP}_{m+1}(\Omega) \longrightarrow \mathcal{AM}(\Omega)$ as
$$ \tau_{m+1}(f)=\Delta_{\mathbb{R}^{n+1}}^{\frac{n-1}{2}} V^m (f)(q).$$
where $$V(f)(x)= \frac{\partial}{\partial x_0}f(x)+ \frac{\underline{x}}{| \underline{x}|^2} \sum_{\ell=1}^n x_{\ell} \frac{\partial}{\partial x_\ell} f(x) \qquad x \in \Omega \setminus \mathbb{R}$$
is the so-called global operator with nonconstant coefficients.
\end{defn}
From the definition of the map $ \tau_{m+1}$ it is clear that it is $ \mathbb{R}_n$-(right) linear
\begin{rem}
Functions in the kernel of the operator $V$ are slice monogenic functions, see \cite{CGS2012, CGS2013, GP}. The main difference with Definition \ref{slicemono1} is that the operator $V$ is slice independent, for this reason this operator is known as global.
\end{rem}
The two polyanalytic Futer-Sce maps are connected by the following result, see \cite[Thm 3.13]{ADS}.
\begin{thm}
\label{rela}
Let $n$ be a fixed number. Let $f: \Omega \longrightarrow \mathbb{R}_n$ be a slice polyanalytic function of order $m + 1$ on the axially symmetric slice domain $ \Omega$. Then we have
\begin{equation}
\partial^m \mathcal{C}_{m+1}(f)(x)= \frac{1}{2^m} \tau_{m+1}(f)(x),\quad \forall x \in \Omega.
\end{equation}
\end{thm}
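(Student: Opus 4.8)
The plan is to reduce both sides of the claimed identity to a single axially monogenic function, namely $\Delta^{\frac{n-1}{2}}_{\mathbb{R}^{n+1}} f_m$, where $f = \sum_{k=0}^m \overline{x}^k f_k$ is the slice polyanalytic decomposition of $f$ furnished by Proposition \ref{deco} (so each $f_k \in \mathcal{SM}(\Omega)$). I would first treat the left-hand side. By definition $\mathcal{C}_{m+1}(f) = \sum_{k=0}^m x_0^k g_k$ with $g_k := \Delta^{\frac{n-1}{2}}_{\mathbb{R}^{n+1}} f_k$, and each $g_k$ is axially monogenic by the Fueter-Sce theorem, i.e. $\partial g_k = 0$. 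Since $x_0^k$ is scalar and the vector part $\sum_j e_j \partial_{x_j}$ does not act on it, a one-line computation gives $\partial(x_0^k g_k) = k x_0^{k-1} g_k + x_0^k \partial g_k = k x_0^{k-1} g_k$. Iterating, $\partial^m(x_0^k g_k) = \frac{k!}{(k-m)!} x_0^{k-m} g_k$ when $k \geq m$ and $0$ otherwise, so only the top term survives and $\partial^m \mathcal{C}_{m+1}(f) = m!\, g_m = m!\, \Delta^{\frac{n-1}{2}}_{\mathbb{R}^{n+1}} f_m$.

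The heart of the argument, and the step I expect to be the main obstacle, is the evaluation of $V^m(f)$ appearing on the right-hand side. I would isolate it as a lemma: for every slice monogenic $g$ and every integer $k \geq 0$,
$$ V(\overline{x}^k g) = 2k\, \overline{x}^{k-1} g. $$
To prove this I would use that on each slice $\mathbb{C}_I$ the global operator $V$, restricted to slice functions, coincides with $2\partial_{\overline{z}}$ in the slice variable $z = x_0 + I|\underline{x}|$. Concretely, writing a slice function as $A(x_0,r)+\underline{\omega}B(x_0,r)$ with $r=|\underline{x}|$ and using $\underline{\omega}E\underline{\omega}=0$ for the Euler operator $E=\sum_\ell x_\ell \partial_{x_\ell}$, one checks directly that $V(A+\underline{\omega}B) = (\partial_{x_0}A - \partial_r B) + \underline{\omega}(\partial_{x_0}B + \partial_r A)$, which is the Cauchy-Riemann operator in $(x_0,r)$ and equals $2\partial_{\overline{z}}$. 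Since $\overline{x}$ is precisely the slice conjugate $\overline{z}$ and $g$ is slice monogenic (so $Vg=0$), the Leibniz rule for $\partial_{\overline{z}}$ yields the stated lowering identity. The delicate point here is to keep the unit $\underline{\omega}$ to the left of the Clifford-valued coefficients $A,B$ throughout, so that this formal complex-variable computation is legitimate despite the non-commutativity of $\mathbb{R}_n$.

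Granting the lemma, I would iterate it: because each $f_k$ remains slice monogenic after each application, one gets $V^m(\overline{x}^k f_k) = 2^m k(k-1)\cdots(k-m+1)\, \overline{x}^{k-m} f_k$, which vanishes for $k<m$ and equals $2^m m!\, f_m$ for $k=m$. Hence $V^m(f) = 2^m m!\, f_m$, and therefore $\tau_{m+1}(f) = \Delta^{\frac{n-1}{2}}_{\mathbb{R}^{n+1}} V^m(f) = 2^m m!\, \Delta^{\frac{n-1}{2}}_{\mathbb{R}^{n+1}} f_m$. Comparing with the first computation gives $\frac{1}{2^m}\tau_{m+1}(f) = m!\, \Delta^{\frac{n-1}{2}}_{\mathbb{R}^{n+1}} f_m = \partial^m \mathcal{C}_{m+1}(f)$, which is the assertion. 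A final technical check I would include is that all these manipulations take place on $\Omega\setminus\mathbb{R}$ where $V$ is defined, and that the resulting axially monogenic expressions extend continuously across the real axis, so that the identity holds on all of $\Omega$.
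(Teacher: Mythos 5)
Your argument is correct, and it is essentially the proof of the result the paper is quoting: the theorem is not reproved here but cited from \cite[Thm.\ 3.13]{ADS}, and your key lemma $V(\overline{x}^k g)=2k\,\overline{x}^{k-1}g$ is exactly \cite[Prop.\ 3.4]{ADS}, which this paper itself invokes (in the proof of Theorem \ref{BL1}) to compute $V^m f = 2^m m!\, f_m$ in the same way you do. Both of your reductions --- $\partial^m\mathcal{C}_{m+1}(f)=m!\,\Delta^{\frac{n-1}{2}}_{\mathbb{R}^{n+1}}f_m$ using axial monogenicity of the $\Delta^{\frac{n-1}{2}}_{\mathbb{R}^{n+1}}f_k$, and $\tau_{m+1}(f)=2^m m!\,\Delta^{\frac{n-1}{2}}_{\mathbb{R}^{n+1}}f_m$ using the lowering property of $V$ --- check out, including the care you take with the noncommutativity in the slice Leibniz rule and the extension across $\Omega\cap\mathbb{R}$.
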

We can summarize the action of the two polyanalytic Fueter-Sce maps by the following diagram

\[
\begin{tikzcd}
	& \mathcal{AM}_{m+1}(\Omega) \arrow{dr}{2^m \partial^m} \\
	\mathcal{SP}_{m+1}(\Omega) \arrow{ur}{\mathcal{C}_{m+1}} \arrow{rr}{\tau_{m+1}} && \mathcal{AM}(\Omega)
\end{tikzcd}
\]
\newline
\newline
The aim of this section is to show what happens when we apply the polyanalytic Futer-Sce maps to a slice polyanalytic function of order $m+1$ developed in series. We will see that this action leads to a new family of polynomials that we call Clifford-Appell polyanalytic polynomials. 
\\Let $\Omega$ be a slice domain such that $0 \in \Omega$ and $f\in\mathcal{SP}_{m+1}(\Omega)$ be a slice polyanalytic function of order $m+1$ on $\Omega$. The polyanalytic decomposition (see Theorem \ref{deco}) states that the function $f$ can be written uniquely as
$$f(x)= \sum_{k=0}^{m}\overline{x}^kf_k(x), \quad x\in \Omega,$$
where $f_k$ are slice monogenic functions. Using the series expansion theorem for slice monogenic functions (see Theorem \ref{expa}) we can write each $f_k$ in a suitable neighbourhood of zero as $ f_k(x)=\sum_{j=0}^{\infty}x^j\alpha_{k,j}$ with $\lbrace{\alpha_{k,j}}\rbrace_{0 \leq k\leq m, j\in\mathbb{N}_0}\subset\mathbb{R}_n$. Thus, we obtain the following series expansion
\begin{equation}\label{series}
f(x)=\sum_{k=0}^{m}\sum_{j=0}^{\infty}\overline{x}^kx^j \alpha_{k,j}.
\end{equation}
\underline{\emph{The action of the map $\mathcal{C}_{m+1}$}}
\newline
\newline
Now, we apply the map $\mathcal{C}_{m+1}$ to the expansion \eqref{series} and by the linearity of this map we get
\begin{equation}
\mathcal{C}_{m+1}(f):=\sum_{k=0}^{m}\sum_{j=0}^{\infty}\mathcal{C}_{m+1}(\overline{x}^kx^j)\alpha_{k,j}.
\end{equation}
To fully describe the action of $ \mathcal{C}_{m+1}$, it is enough to describe how it acts on the monomials of the form $\overline{x}^kx^j$ with $k=0,...,m$ and $j\geq 0$.
\begin{thm}\label{C-n action}
\label{poly}
Let $n$ be a fixed odd number and $m\geq 0$. Then, for any $0 \leq k \leq m$ we have
$$
\mathcal{C}_{n+1}(\bar{x}^k x^j)=
\begin{cases}
0 & \mbox{if } \quad j < n-1, \\
x_0^k 4^{\frac{n-1}{2}}(-1)^{\frac{n-1}{2}} \left[ \Gamma \left( \frac{n+1}{2}\right)\right]^{2}& \mbox{if } \quad j=n-1,\\
\left[ \Gamma \left( \frac{n+1}{2}\right)\right]^{2} 2^{n-1}(-1)^{\frac{n-1}{2}} \frac{(n)_{j+1-n}}{(j+1-n)!} x_0^k P^n_{j+1-n}(x), & \mbox{if} \quad j \geq n-1,
\end{cases}
$$
where the polynomials $P^n_{j+1-n}(x)$ are defined in formula \eqref{appe}.
\end{thm}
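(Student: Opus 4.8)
The plan is to reduce the computation to the purely monogenic action of the Fueter-Sce map on the paravector power $x^j$, which has already been recorded in Corollary \ref{lapal2}. The key observation is that the monomial $\bar{x}^k x^j$ is itself a slice polyanalytic function of order $m+1$ whose polyanalytic decomposition (Proposition \ref{deco}) can be read off directly: since $x^j$ is slice monogenic (cf.\ Theorem \ref{expa}), we may write $\bar{x}^k x^j = \sum_{\ell=0}^m \bar{x}^\ell f_\ell(x)$ with $f_\ell(x)=x^j$ for $\ell=k$ and $f_\ell(x)=0$ otherwise. By the uniqueness in Proposition \ref{deco}, these are exactly the slice monogenic components of $\bar{x}^k x^j$.

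First I would apply the definition of $\mathcal{C}_{m+1}$ to this decomposition. Because only the $k$-th component survives, the defining sum collapses to a single term:
\[
\mathcal{C}_{m+1}(\bar{x}^k x^j) = \sum_{\ell=0}^m x_0^\ell\, \Delta^{\frac{n-1}{2}}_{\mathbb{R}^{n+1}} f_\ell(x) = x_0^k\, \Delta^{\frac{n-1}{2}}_{\mathbb{R}^{n+1}}(x^j).
\]
This isolates the entire problem into evaluating $\Delta^{\frac{n-1}{2}}_{\mathbb{R}^{n+1}}(x^j)$, for which Corollary \ref{lapal2} supplies the three regimes $j<n-1$, $j=n-1$ and $j>n-1$.

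Next I would substitute the three cases. For $j<n-1$ the Fueter-Sce image vanishes, giving $0$. For $j=n-1$ it equals $\gamma_n (n-1)!$, and multiplying by $x_0^k$ together with the identity $\gamma_n (n-1)! = (-1)^{\frac{n-1}{2}} 2^{n-1}\big[\Gamma(\tfrac{n+1}{2})\big]^2 = (-1)^{\frac{n-1}{2}} 4^{\frac{n-1}{2}}\big[\Gamma(\tfrac{n+1}{2})\big]^2$ recovers the middle case. For $j>n-1$ it equals $\gamma_n \frac{j!}{(j-n+1)!} P^n_{j+1-n}(x)$, and the only remaining task is cosmetic: to rewrite the factorial ratio in terms of a Pochhammer symbol. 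Using $(n)_{j+1-n} = \frac{\Gamma(j+1)}{\Gamma(n)} = \frac{j!}{(n-1)!}$ one checks that $\gamma_n \frac{j!}{(j-n+1)!} = (-1)^{\frac{n-1}{2}} 2^{n-1}\big[\Gamma(\tfrac{n+1}{2})\big]^2 \frac{(n)_{j+1-n}}{(j+1-n)!}$, which is precisely the coefficient appearing in the statement.

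The argument is thus a one-line reduction followed by routine bookkeeping, so there is no serious obstacle. The only two points requiring a little care are the justification that $\bar{x}^k x^j$ has the claimed trivial polyanalytic decomposition — which rests on $x^j$ being slice monogenic together with the uniqueness in Proposition \ref{deco} — and the translation between factorials and Pochhammer symbols that puts the $j>n-1$ case into the stated form.
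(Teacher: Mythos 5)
Your proof is correct and follows essentially the same route as the paper: the paper likewise observes that the polyanalytic decomposition of $\bar{x}^k x^j$ reduces the computation to $x_0^k\,\Delta^{\frac{n-1}{2}}_{\mathbb{R}^{n+1}}(x^j)$ and then invokes Corollary \ref{lapal2}. Your additional verifications (the uniqueness argument for the decomposition and the factorial-to-Pochhammer conversion of the constants) are correct and simply spell out what the paper leaves implicit.
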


\begin{proof}
From the definition of the map $ \mathcal{C}_{m+1}$ it follows that	
$$ \mathcal{C}_{m+1}(\overline{x}^{k}x^j)=x_0^k\Delta_{\mathbb{R}^{n+1}}^{\frac{n-1}{2}}(x^j).$$
Now, the result follows by Corollary \ref{lapal2}.
\end{proof}

\begin{rem}
This is an extension to general Clifford algebras of the result \cite[Thm. 3.3]{DMD}, established in the quaternionic setting. Indeed, if we consider $n=3$ in Theorem \ref{poly}, for $q \in \mathbb{H}$ and $j \geq 2$, we get
\begin{eqnarray*}
\mathcal{C}_{m+1}(\overline{q}^{k}q^j)&=& -[\Gamma(2)]^2 2^2 \frac{(3)_{j-2}}{(j-2)!}x_0^k P^3_{j-2}(q)\\
&=& -4 \frac{\Gamma(j+1)}{\Gamma(3) (j-2)!} x_0^k P^3_{j-2}(q)\\
&=& -2 j(j-1) x_0^k P^3_{j-2}(q).
\end{eqnarray*}
which is exactly the same result obtained in \cite{DMD}.
\end{rem}
The expansion given in Theorem \ref{poly} yields a set of generators for the space of polyanalytic functions of axial type. These generators are given by
\begin{equation}
\label{gener}
\mathcal{A}_{k,s}^n(x):=x_0^kP^n_{s}(x),\quad k=0,...,m, \quad s\geq 0.
\end{equation}
We shall show that this family of polynomials also satisfy an Appell-like property.
\begin{prop}
	Let $ 0 \leq k \leq m$ and $ s \geq 0$. Then we have
\begin{equation}
\label{app4}
\overline{\partial} [\mathcal{A}_{k,s}^n(x)]=k \mathcal{A}_{k-1,s}^n+2s \mathcal{A}_{k,s-1}^n.
\end{equation}
\end{prop}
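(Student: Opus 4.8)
The plan is to reduce the computation to a Leibniz rule for the hypercomplex derivative $\overline{\partial}=\partial_{x_0}-\sum_{\ell=1}^n e_\ell\partial_{x_\ell}$, combined with the Appell property \eqref{app} of the polynomials $P_s^n$. The key observation is that $\mathcal{A}_{k,s}^n(x)=x_0^kP_s^n(x)$ is the product of the scalar factor $x_0^k$, which depends only on the real variable $x_0$, with the Clifford-valued factor $P_s^n(x)$.

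First I would establish the relevant product rule. Since $x_0^k$ is real-valued it commutes with each unit $e_\ell$, and moreover $\partial_{x_\ell}(x_0^k)=0$ for $\ell\geq 1$ while $\partial_{x_0}(x_0^k)=kx_0^{k-1}$. Hence, applying $\overline{\partial}$ to $x_0^kP_s^n$ and distributing the first-order derivatives, every term in which some $\partial_{x_\ell}$ with $\ell\geq 1$ hits $x_0^k$ vanishes, and the only contribution beyond $x_0^k\,\overline{\partial}P_s^n$ comes from $\partial_{x_0}$ acting on $x_0^k$. This yields
\[
\overline{\partial}\bigl(x_0^kP_s^n(x)\bigr)=kx_0^{k-1}P_s^n(x)+x_0^k\,\overline{\partial}P_s^n(x).
\]

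Finally I would invoke the Appell property \eqref{app}, which gives $\overline{\partial}P_s^n(x)=2sP_{s-1}^n(x)$. Substituting this into the previous identity and recognizing $x_0^{k-1}P_s^n=\mathcal{A}_{k-1,s}^n$ together with $x_0^kP_{s-1}^n=\mathcal{A}_{k,s-1}^n$ gives exactly \eqref{app4}; the boundary conventions are consistent, since for $k=0$ the first term drops out and for $s=0$ the second term is annihilated by the factor $2s$. There is no genuine obstacle in this argument; the only point that requires care is the bookkeeping in the product rule, namely the fact that the scalar character of $x_0^k$ lets it commute past the Clifford units $e_\ell$, so that no anticommutation signs appear and the Clifford-valued factor $P_s^n$ is differentiated cleanly by $\overline{\partial}$.
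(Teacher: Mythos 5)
Your proposal is correct and follows essentially the same route as the paper: a Leibniz rule for $\overline{\partial}$ applied to the product $x_0^k P_s^n(x)$, combined with the Appell property \eqref{app} giving $\overline{\partial}P_s^n = 2sP_{s-1}^n$. The only difference is that you spell out the justification of the product rule (the scalar factor $x_0^k$ commuting with the units $e_\ell$ and being annihilated by $\partial_{x_\ell}$ for $\ell\geq 1$), which the paper leaves implicit.
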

\begin{proof}
This result follows directly from the Leibniz formula and the Appell property for the polynomials $P_k^n(x)$, see e.g., formula \eqref{app}. Indeed,
\begin{eqnarray*}
\overline{\partial} [\mathcal{A}_{k,s}^n(x)]&=& \overline{\partial}[x_0^kP^n_{s}(x)]\\
&=& k x_{0}^{k-1}P^n_{s}(x)+ 2s x_{0}^k P^n_{s-1}(x)\\
&=& k \mathcal{A}_{k-1,s}^n+2s \mathcal{A}_{k,s-1}^n.
\end{eqnarray*}
\end{proof}
The result proved in the above Proposition can be considered an Appell-like property for the generalized Clifford-Appell polyanalytic polynomials. Indeed, the polynomials $\mathcal{A}^n_{k,s}(x)$ are such that $ \mathcal{A}_{0,s}^n(x)=P^n_s(x)$. Thus, by setting $m=0$ (this implies $k=0$) in formula \eqref{app4} we get the Appell property \eqref{app}.
\newline
\newline
We finish the discussion on the application of the map $ \mathcal{C}_{m+1}$ by writing a polyanalytic function of order $m+1$ in terms of the generalized Clifford-Appell polyanalytic polynomials $ \mathcal{A}_{k,s}^n(x)$.
\begin{thm}
Let us consider $n$ a fixed odd number. Let $\Omega$ be an axially symmetric slice domain.
The image of $g(x)= \sum_{s=0}^\infty x^s \alpha_s$, with $ \{\alpha_s\}_{s \geq 0} \subset \mathbb{R}_n$, under the map $ \mathcal{C}_{m+1}$ in $ \mathcal{SP}_{n+1}(\Omega)$ is given by
\begin{equation}
f(x)=\mathcal{C}_{m+1}(g)(x)= \sum_{k=0}^{m}\sum_{s=0}^{\infty}\mathcal{A}^n_{k,s}(x)\beta_{k,s},
\end{equation}	
where $ \beta_{k,s}=\left[ \Gamma \left( \frac{n+1}{2}\right)\right]^{2} 2^{n-1}(-1)^{\frac{n-1}{2}}\frac{(n)_{s}}{s!} \alpha_{k,s+n-1}$.
\end{thm}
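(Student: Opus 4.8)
The plan is to reduce the entire statement to the single-monomial computation already carried out in Theorem \ref{poly}, followed by a careful reindexing. First I would invoke the $\mathbb{R}_n$-(right) linearity of $\mathcal{C}_{m+1}$ (observed right after \eqref{line}) in order to apply the map termwise to the polyanalytic expansion \eqref{series} of the input. By the polyanalytic decomposition (Proposition \ref{deco}) together with the slice-monogenic series expansion (Theorem \ref{expa}) applied to each slice-monogenic component, the input is written as the double series
$$ g(x) = \sum_{k=0}^{m}\sum_{j=0}^{\infty}\overline{x}^k x^j\, \alpha_{k,j}, $$
so that termwise linearity gives
$$ \mathcal{C}_{m+1}(g)(x) = \sum_{k=0}^{m}\sum_{j=0}^{\infty} \mathcal{C}_{m+1}(\overline{x}^k x^j)\,\alpha_{k,j}. $$

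Next I would substitute the explicit action of $\mathcal{C}_{m+1}$ on the monomials $\overline{x}^k x^j$ from Theorem \ref{poly} (which itself rests on Corollary \ref{lapal2}). The key observation is that all terms with $j < n-1$ vanish, so the inner sum effectively starts at $j=n-1$; moreover the boundary value $j=n-1$ given by the middle branch of Theorem \ref{poly} coincides with the general formula evaluated there, since $4^{\frac{n-1}{2}} = 2^{n-1}$, $(n)_0 = 1$, and $P_0^n(x)=1$ (so that $\mathcal{A}_{k,0}^n(x) = x_0^k$). Hence for every $j \geq n-1$ one has the single unified expression
$$ \mathcal{C}_{m+1}(\overline{x}^k x^j) = \left[\Gamma\!\left(\tfrac{n+1}{2}\right)\right]^2 2^{n-1}(-1)^{\frac{n-1}{2}} \frac{(n)_{j+1-n}}{(j+1-n)!}\, x_0^k P_{j+1-n}^n(x). $$

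Then I would perform the index shift $j = s+n-1$ in the inner sum, so that $s$ ranges over $\mathbb{N}_0$, and recognize the generator $x_0^k P_s^n(x) = \mathcal{A}_{k,s}^n(x)$ from the definition \eqref{gener}. Absorbing the surviving scalar factor into the coefficient yields
$$ \mathcal{C}_{m+1}(g)(x) = \sum_{k=0}^{m}\sum_{s=0}^{\infty} \mathcal{A}_{k,s}^n(x)\,\beta_{k,s}, \qquad \beta_{k,s} = \left[\Gamma\!\left(\tfrac{n+1}{2}\right)\right]^2 2^{n-1}(-1)^{\frac{n-1}{2}} \frac{(n)_s}{s!}\, \alpha_{k,s+n-1}, $$
which is exactly the claimed formula.

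I do not expect any genuine analytic obstacle here, since the hard part has already been isolated in Theorem \ref{poly} (the action of the Fueter--Sce power of the Laplacian on $x^j$, via Corollary \ref{lapal2}). The only points that require care are bookkeeping: justifying the termwise application of $\mathcal{C}_{m+1}$ to the locally uniformly convergent double series, and checking that after the shift $j=s+n-1$ the coefficients $\alpha_{k,s+n-1}$ line up correctly with the vanishing of the low-order monomials and with the $s=0$ boundary case. A minor notational caveat is that the coefficients written as ``$\alpha_s$'' in the hypothesis must be read as the double-indexed coefficients $\alpha_{k,j}$ of the polyanalytic expansion \eqref{series}; with that reading the conclusion follows directly.
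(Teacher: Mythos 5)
Your proof is correct and follows exactly the paper's route: the paper's own proof is the one-line remark that the result ``follows by putting together formula \eqref{series}, Theorem \ref{C-n action} and formula \eqref{gener}'', which is precisely the termwise application, substitution, and reindexing you carry out in detail. Your additional checks (that the $j=n-1$ branch agrees with the general formula and that the $\alpha_s$ in the hypothesis must be read as the double-indexed coefficients of \eqref{series}) are sound and in fact clarify the paper's statement.
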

\begin{proof}
It follows by putting together formula \eqref{series}, Theorem \ref{poly} and formula \eqref{gener}.
\end{proof}
\underline{\emph{The action of the map $\tau_{m+1}$}}
\newline
\newline
Now, we apply the second polyanalytic Fueter-Sce map to the series expansion \eqref{series}. From the linearity of this map it follows
\begin{equation}
\label{series2}
\tau_{m+1}(f)= \sum_{k=0}^m \sum_{j=0}^\infty \tau_{m+1}(\bar{x}^k x^j) \alpha_{k,j}.
\end{equation}
Now, we show how the map $ \tau_{m+1}$ acts on the monomials $\bar{x}^k x^j$.
\begin{thm}
\label{BL1}
Let $n$ be a fixed odd number. For any fixed $m\geq 0$ and $0 \leq k \leq m$ we have
\begin{equation}
\tau_{m+1}(\bar{x}^k x^j)=
\begin{cases}
0 & \mbox{if } \quad j< n-1, \\
2^{m+n-1} m! (-1)^{\frac{n-1}{2}} \left[\left(\frac{n-1}{2}\right)!\right]^2\delta_{m,k}  & \mbox{if } \quad j= n-1, \\
2^{m+n-1} m! \left[\Gamma \left( \frac{n+1}{2}\right)\right]^2 \frac{(n)_{j-n+1}}{(j-n+1)!} (-1)^\frac{n-1}{2} P^{n}_{j-n+1}(x) \delta_{m,k} & \mbox{if} \quad j > n-1,
\end{cases}
\end{equation}
where the polynomials $P^n_{j+1-n}(x)$ are defined in formula \eqref{appe}.
\end{thm}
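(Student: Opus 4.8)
The plan is to reduce everything to the already-computed action of the first polyanalytic Fueter--Sce map $\mathcal{C}_{m+1}$ via the intertwining relation of Theorem \ref{rela}. Since for $0 \le k \le m$ the monomial $\bar{x}^k x^j$ is slice polyanalytic of order $m+1$ (its polyanalytic decomposition being $\bar{x}^k\cdot x^j$ with $x^j$ slice monogenic), Theorem \ref{rela} applies and gives
$$ \tau_{m+1}(\bar{x}^k x^j) = 2^m\,\partial^m\,\mathcal{C}_{m+1}(\bar{x}^k x^j). $$
Thus the first step is simply to insert the explicit expression for $\mathcal{C}_{m+1}(\bar{x}^k x^j)$ from Theorem \ref{poly}, which is $0$ for $j<n-1$, a scalar multiple of $x_0^k$ for $j=n-1$, and a scalar multiple of $x_0^k P^n_{j+1-n}(x)$ for $j>n-1$. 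All the remaining work is then to understand the operator $\partial^m$ acting on expressions of the form $x_0^k P^n_s(x)$.

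The key computation, and the only place where anything needs to be checked, is the identity
$$ \partial^m\bigl(x_0^k P^n_s(x)\bigr) = \frac{k!}{(k-m)!}\,x_0^{k-m}\,P^n_s(x) \quad (m \le k), \qquad \partial^m\bigl(x_0^k P^n_s(x)\bigr)=0 \quad (m>k). $$
I would establish this by a one-line induction: writing $\partial = \partial_{x_0} + \sum_{\ell=1}^n e_\ell \partial_{x_\ell}$ and using that $x_0^k$ is real-valued and depends only on $x_0$, the product rule gives $\partial(x_0^k P^n_s) = k x_0^{k-1} P^n_s + x_0^k\,\partial P^n_s$; since $P^n_s$ is monogenic the second term vanishes, so $\partial(x_0^k P^n_s) = k x_0^{k-1} P^n_s$, and iterating yields the displayed formula. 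Because the hypothesis forces $0 \le k \le m$, the only surviving case is $k=m$, where $\partial^m(x_0^k P^n_s) = m!\,P^n_s(x)$; for $k<m$ the result is $0$. This is exactly what produces the Kronecker factor $\delta_{m,k}$ in all three cases of the statement. The scalar case $j=n-1$ is the specialization $P^n_0 \equiv 1$, for which the same computation gives $\partial^m(x_0^k) = m!\,\delta_{m,k}$.

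It then remains to assemble the constants. For $j>n-1$, multiplying the constant in $\mathcal{C}_{m+1}(\bar{x}^k x^j)$ by $2^m m!$ and merging $2^m 2^{n-1}=2^{m+n-1}$ yields precisely $2^{m+n-1} m! \left[\Gamma\!\left(\tfrac{n+1}{2}\right)\right]^2 \frac{(n)_{j-n+1}}{(j-n+1)!}(-1)^{\frac{n-1}{2}} P^n_{j-n+1}(x)\,\delta_{m,k}$. For $j=n-1$, combining $2^m$ with the factor $4^{\frac{n-1}{2}}=2^{n-1}$ and using that $n$ odd gives $\Gamma\!\left(\tfrac{n+1}{2}\right)=\left(\tfrac{n-1}{2}\right)!$, one recovers $2^{m+n-1} m!(-1)^{\frac{n-1}{2}}\left[\left(\tfrac{n-1}{2}\right)!\right]^2\delta_{m,k}$; and for $j<n-1$ the vanishing of $\mathcal{C}_{m+1}(\bar{x}^k x^j)$ gives $0$ immediately. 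I do not expect a genuine obstacle here: the whole argument is driven by Theorem \ref{rela} together with the monogenicity of the $P^n_s$, and the single thing worth double-checking is the bookkeeping of constants (the identification $\Gamma(\tfrac{n+1}{2})=(\tfrac{n-1}{2})!$ and the power-of-two merge), which is routine.
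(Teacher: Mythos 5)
Your proof is correct, but it takes a genuinely different route from the paper's. The paper computes $\tau_{m+1}(\bar{x}^k x^j)$ directly from its definition $\tau_{m+1}=\Delta_{\mathbb{R}^{n+1}}^{\frac{n-1}{2}}V^m$: it invokes the identity $V(\bar{x}^k f)=2k\,\bar{x}^{k-1}f$ from \cite[Prop.~3.4]{ADS} to get $V^m(\bar{x}^k x^j)=2^m m!\,x^j\,\delta_{m,k}$, and then applies Corollary \ref{lapal2}. You instead route everything through the intertwining relation of Theorem \ref{rela}, writing $\tau_{m+1}=2^m\partial^m\mathcal{C}_{m+1}$, substituting Theorem \ref{C-n action}, and then proving by hand that $\partial^m\bigl(x_0^k P^n_s\bigr)=\frac{k!}{(k-m)!}x_0^{k-m}P^n_s$ via Leibniz and the monogenicity of $P^n_s$ (valid since $x_0^k$ is a real scalar, so it commutes with the $e_\ell$ and the product rule applies cleanly). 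Your key lemma is precisely the content of the paper's Proposition \ref{res1} — which the paper derives \emph{from} Theorem \ref{BL1} and Theorem \ref{rela} — so you have effectively reversed the logical order: you prove that differentiation identity independently and then deduce the theorem, whereas the paper proves the theorem first and obtains the identity as a corollary. There is no circularity in your version, and the constant bookkeeping ($4^{\frac{n-1}{2}}=2^{n-1}$, $\Gamma(\frac{n+1}{2})=(\frac{n-1}{2})!$ for $n$ odd, $2^m\cdot 2^{n-1}=2^{m+n-1}$) all checks out. What each approach buys: the paper's argument needs only the algebra of the global operator $V$ and is self-contained with respect to $\mathcal{C}_{m+1}$, while yours trades that for dependence on Theorems \ref{rela} and \ref{C-n action} but yields the useful identity $\partial^m(x_0^kP^n_s)=m!\,\delta_{m,k}P^n_s$ for free along the way.
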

\begin{proof}
First of all, we recall that a slice monogenic polyanalytic function $f$ can be written using a decomposition in terms of slice monogenic functions $ \{f_k\}_{0 \leq k \leq m}$
\begin{equation}
\label{summ}
f(x)= \sum_{k=0}^m \bar{x}^k f_k(x),
\end{equation}
see Theorem \ref{polydeco}. By \cite[Proposition 3.4]{ADS} (one can easily extend in $ \mathbb{R}^{n+1}$) we know the following property of the global operator
$$ V(\bar{x}^kf)(x)=2k\bar{x}^{k-1}f(x).$$
Then we have
$$ V(f)(x)=\sum_{k=1}^{m} 2k \bar{x}^{k-1} f_k(x).$$
Similarly, we apply $m$-times the golbal operator $V$ to obtain
$$ V^m f(x)=2^m m! f_{m}(x),$$
where $f_m(x)$ is the last term of the sum defined in \eqref{summ}.
\\ This implies that
$$ V^{m}\left(\bar{x}^k x^j \right)=2^m m! x^j \delta_{m,k}.$$
From the definition of the map $ \tau_{m+1}$ we get
\begin{eqnarray*}
\tau_{m+1}(\bar{x}^k x^j) &=& \Delta_{\mathbb{R}^{n+1}}^{\frac{n-1}{2}} V^m \left(\bar{x}^k x^j\right) \\
&=& 2^m m!  \delta_{m,k} \Delta_{\mathbb{R}^{n+1}}^{\frac{n-1}{2}} x^j.
\end{eqnarray*}
Now, the result follows by Corollary \ref{lapal2}.
\end{proof}
\begin{rem}
If we put $n=3$ in Theorem \ref{BL1}, we get the same result obtained in \cite[Thm. 3.14]{DMD}.
\end{rem}

\begin{prop}
	\label{res1}
	Let $n$ be a fixed odd number. Then for any fixed $m\geq 0$ and $j > n-1$ we have
	$$\partial^m \mathcal{A}^n_{k,j+1-n}=m! \delta_{m,k} P^{n}_{j-n+1}(x).$$
\end{prop}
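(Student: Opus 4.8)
The plan is to prove the identity by direct differentiation, exploiting the monogenicity of the Clifford-Appell polynomials rather than re-deriving the action of the polyanalytic Fueter-Sce maps. Recall from \eqref{gener} that $\mathcal{A}^n_{k,s}(x)=x_0^kP^n_s(x)$, and that each $P^n_s$ is axially monogenic, i.e. $\partial P^n_s=0$. Setting $s:=j+1-n$, it therefore suffices to understand how $\partial=\partial_{x_0}+\sum_{i=1}^n e_i\partial_{x_i}$ acts on $\mathcal{A}^n_{k,s}$ and then to iterate $m$ times. Throughout I use that in this polyanalytic setting of order $m+1$ the index satisfies $0\leq k\leq m$, which is precisely what makes the Kronecker delta appear (for $k>m$ the claimed identity would in fact be false).

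First I would compute a single application of $\partial$. Since $x_0^k$ depends only on the real variable $x_0$, the vector part $\sum_{i=1}^n e_i\partial_{x_i}$ acts only on $P^n_s$, so by the Leibniz rule
\begin{equation*}
\partial\big(x_0^kP^n_s\big)=kx_0^{k-1}P^n_s+x_0^k\Big(\partial_{x_0}P^n_s+\sum_{i=1}^n e_i\partial_{x_i}P^n_s\Big)=kx_0^{k-1}P^n_s+x_0^k\,\partial P^n_s.
\end{equation*}
The last term vanishes because $P^n_s$ is monogenic, which yields the recursion $\partial\mathcal{A}^n_{k,s}=k\,\mathcal{A}^n_{k-1,s}$ (valid also for $k=0$, where both sides are $0$).

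Iterating this recursion $m$ times is the heart of the argument. For $k\leq m$ the exponent of $x_0$ drops by one at each step while a factor equal to the current exponent is produced; after $k$ steps one reaches $k!\,\mathcal{A}^n_{0,s}=k!\,P^n_s$, and any further application of $\partial$ annihilates this monogenic polynomial. Hence $\partial^m\mathcal{A}^n_{k,s}=0$ whenever $m>k$, whereas for $m=k$ one obtains exactly $k!\,P^n_s=m!\,P^n_s$. Combining the two cases gives $\partial^m\mathcal{A}^n_{k,j+1-n}=m!\,\delta_{m,k}\,P^n_{j-n+1}(x)$, which is the claim.

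Alternatively, the statement follows from the machinery just developed and serves as a consistency check on it: by Theorem \ref{poly}, $\mathcal{C}_{m+1}(\bar{x}^kx^j)$ is a fixed nonzero constant times $\mathcal{A}^n_{k,j+1-n}$, so applying $\partial^m$ and invoking the intertwining relation $\partial^m\mathcal{C}_{m+1}=\tfrac{1}{2^m}\tau_{m+1}$ of Theorem \ref{rela} reduces the claim to the formula for $\tau_{m+1}(\bar{x}^kx^j)$ in Theorem \ref{BL1}; cancelling the common constant $\left[\Gamma\left(\frac{n+1}{2}\right)\right]^2 2^{n-1}(-1)^{\frac{n-1}{2}}(n)_{j+1-n}/(j+1-n)!$ on both sides reproduces the identity, with the $\delta_{m,k}$ inherited directly from Theorem \ref{BL1}. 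The only genuine obstacle in this second route is the precise bookkeeping of these constants together with the factor $2^m$; the direct computation above sidesteps this entirely, so I would present it as the main proof.
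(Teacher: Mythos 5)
Your main argument is correct, and it is genuinely different from the paper's proof. The paper proceeds top-down through the polyanalytic machinery: it uses Theorem \ref{poly} to write $\mathcal{A}^n_{k,j+1-n}$ as an explicit nonzero constant times $\mathcal{C}_{m+1}(\bar{x}^k x^j)$, then applies the intertwining relation $\partial^m\mathcal{C}_{m+1}=\tfrac{1}{2^m}\tau_{m+1}$ of Theorem \ref{rela} and reads off the answer from the formula for $\tau_{m+1}(\bar{x}^k x^j)$ in Theorem \ref{BL1} — i.e.\ exactly the ``alternative'' route you sketch in your last paragraph. Your primary proof instead computes $\partial\bigl(x_0^kP^n_s\bigr)=k\,x_0^{k-1}P^n_s$ directly from the Leibniz rule and the monogenicity of $P^n_s$ (here it matters that $x_0^k$ is a real scalar, so it commutes with the $e_i$ and the vector derivatives pass straight onto $P^n_s$; your computation handles this correctly), and then iterates. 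This is more elementary and more transparent: it needs none of Theorems \ref{poly}, \ref{rela}, \ref{BL1}, and it avoids the constant-chasing that the paper's proof relies on. It also has the virtue of exposing the hidden hypothesis $0\leq k\leq m$: your recursion shows that for $k>m$ one would get $\frac{k!}{(k-m)!}\,x_0^{k-m}P^n_s\neq 0$, so the Kronecker delta in the statement is only correct on the range of $k$ for which $\mathcal{A}^n_{k,s}$ is defined in \eqref{gener}, a restriction the paper inherits silently from Theorems \ref{poly} and \ref{BL1}. What the paper's route buys in exchange is that the proposition then serves as a direct consistency check between the two polyanalytic Fueter-Sce maps, which is presumably why the authors chose it.
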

\begin{proof}
By Theorem \ref{poly} for $j>n-1$ we get
$$ \mathcal{A}^n_{k,j+1-n}(x)=\frac{(-1)^{\frac{n-1}{2}}(j+1-n)!}{\left[\Gamma \left(\frac{n+1}{2}\right) \right]^2 2^{n-1}(n)_{j+1-n} } \mathcal{C}_{m+1}(\bar{x}^k x^j).$$
Finally, by Theorem \ref{rela} and Theorem \ref{BL1} we obtain
\begin{eqnarray*}
\partial^m(\mathcal{A}^n_{k,j+1-n}(x))&=&\frac{(-1)^{\frac{n-1}{2}}(j+1-n)!}{\left[\Gamma \left(\frac{n+1}{2}\right) \right]^2 2^{n-1}(n)_{j+1-n} } \partial^m\mathcal{C}_{m+1}(\bar{x}^k x^j)\\
&=& \frac{(-1)^{\frac{n-1}{2}}(j+1-n)!}{\left[\Gamma \left(\frac{n+1}{2}\right) \right]^2 2^{m+n-1}(n)_{j+1-n} } \mathcal{\tau}_{m+1}(\bar{x}^k x^j)\\
&=& m! \delta_{m,k} P^{n}_{j-n+1}(x).
\end{eqnarray*}
\end{proof}

In the following result we get a full description of the action of the map $ \tau_{m+1}$ to a slice monogenic polyanalytic function of order $m+1$.
\begin{thm}
\label{R5}
Let $n$ be a fixed odd number. Let us consider $\Omega$ an axially symmetric slice domain.
The image of $g(x)= \sum_{s=0}^\infty x^s \alpha_s$ with $ \{\alpha_s\}_{s \geq 0} \subset \mathbb{R}_n$, under the map $\tau_{m+1}$ in $ \mathcal{SP}_{n+1}(\Omega)$ is given by
$$ f(x)= \tau_{m+1}(g)(x)= \sum_{s=0}^\infty P^{n}_{s}(x) \gamma_{k,s+n-1},$$
where $ \gamma_{k,s}=2^{m+n-1}(-1)^\frac{n-1}{2} m! \left[\Gamma \left( \frac{n+1}{2}\right)\right]^2 \frac{(n)_{s}}{s!} \alpha_{k,s+n-1}$ .
\end{thm}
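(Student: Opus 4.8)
The plan is to reduce the statement to Theorem~\ref{BL1} in exactly the way the companion result for $\mathcal{C}_{m+1}$ was reduced to Theorem~\ref{poly}. First I would regard $g$ as a slice polyanalytic function of order $m+1$ and write it in the double-series form \eqref{series}, namely $g(x)=\sum_{k=0}^m\sum_{j=0}^\infty \bar{x}^k x^j \alpha_{k,j}$; the single-series notation $\sum_s x^s\alpha_s$ appearing in the statement is understood as the slice-monogenic component that is singled out below. Applying $\tau_{m+1}$ and using its $\mathbb{R}_n$-right linearity gives \eqref{series2}, that is $\tau_{m+1}(g)=\sum_{k=0}^m\sum_{j=0}^\infty \tau_{m+1}(\bar{x}^k x^j)\alpha_{k,j}$, so the whole image is determined by the action on the monomials $\bar{x}^k x^j$.

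Next I would substitute the three cases of Theorem~\ref{BL1}. The decisive feature is the factor $\delta_{m,k}$: since $\tau_{m+1}=\Delta^{\frac{n-1}{2}}_{\mathbb{R}^{n+1}}V^m$ and $V^m$ extracts $2^m m!$ times the top slice-monogenic component, only the $k=m$ term of the $k$-sum survives. The terms with $j<n-1$ vanish, and for $j> n-1$ the reindexing $j=s+n-1$ turns $P^n_{j-n+1}(x)$ into $P^n_s(x)$ and the prefactor $\frac{(n)_{j-n+1}}{(j-n+1)!}$ into $\frac{(n)_s}{s!}$. Collecting the surviving constant $2^{m+n-1}(-1)^{\frac{n-1}{2}}m!\left[\Gamma\!\left(\frac{n+1}{2}\right)\right]^2\frac{(n)_s}{s!}$ together with $\alpha_{m,s+n-1}$ into $\gamma$ yields $\tau_{m+1}(g)(x)=\sum_{s=0}^\infty P^n_s(x)\gamma_{k,s+n-1}$, which is precisely the claimed expansion.

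The algebra is routine bookkeeping of indices, so the only point that needs genuine care is convergence of the output series on the relevant neighbourhood of the origin. I would control it with the estimate $|P^n_s(x)|\le |x|^s$ from \eqref{esti}, together with the observation that $\frac{(n)_s}{s!}=\binom{s+n-1}{n-1}$ grows only polynomially in $s$ and hence does not shrink the radius of convergence inherited from the top component of $g$. With this the series $\sum_{s=0}^\infty P^n_s(x)\gamma_{k,s+n-1}$ converges wherever that component does, which completes the argument.
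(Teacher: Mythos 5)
Your proposal is correct and follows essentially the same route as the paper, whose proof consists precisely of combining the expansion \eqref{series2} with Theorem \ref{BL1}, using the factor $\delta_{m,k}$ to isolate the $k=m$ term and the reindexing $j=s+n-1$. The convergence check via $|P^n_s(x)|\le |x|^s$ is a reasonable addition that the paper leaves implicit.
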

\begin{proof}
It follows by putting together formula \eqref{series2} and Theorem \ref{BL1}.
\end{proof}

\begin{rem}
In \cite{DMD6} it is showed that the two maps $ \tau_{m+1}$ and $ \mathcal{C}_{m+1}$ are surjective.
\end{rem}

\section{Concluding remarks}

In this work, we used the relation between the well-known Sce-Fueter mapping and generalized Cauchy-Kovalevskaya theorems in order to study a family of Appell polynomials in Clifford analysis, which are related to some elementary functions and reproducing kernel Clifford modules. In the last section of the paper, a similar approach built upon a polyanalytic Fueter-Sce map result allowed to introduce a new family of Appell polyanalytic monogenic polynomials. In a forthcoming work, we will study reproducing kernel polyanalytic monogenic modules built upon these new Appell polyanalytic monogenic polynomials and related topics.

\section*{Competing interests declaration}
The authors declare none.

\vspace{5mm}

Schmid College of Science and Technology, Chapman University One University Drive Orange, California 92866, USA
\\Email address: \textbf{ademartino@chapman.edu}
\newline
\newline
Schmid College of Science and Technology, Chapman University One University Drive Orange, California 92866, USA
\\Email address: \textbf{diki@chapman.edu}
\newline
\newline
Clifford Research Group, Department of Electronics and Information Systems, Faculty of Engineering and Architecture, Ghent University, Krijgslaan 281, 9000 Gent, Belgium.
\\Email address: \textbf{ali.guzmanadan@ugent.be}

\end{document}